\def\newaliasedtheorem#1[#2]#3{
  \newaliascnt{#1@alt}{#2}
  \newtheorem{#1}[#1@alt]{#3}
  \expandafter\newcommand\csname #1@altname\endcsname{#3}
}
\theoremstyle{plain}
\newtheorem{theorem}{Theorem}[section]
\theoremstyle{remark}
\theoremstyle{definition}
\newtheorem{OQ(c)}[theorem]{Open Q(c)uestion}
\newtheorem{Q(c)Q(c)}[theorem]{Q(c)uestions}
\theoremstyle{remark}
\numberwithin{equation}{section}
\def\eps{\varepsilon}
\def\R{\mathbb R}
\def\N{{\mathbb N}}
\DeclareMathOperator{\dv}{div}
\DeclareMathOperator{\curl}{curl}
\DeclareMathOperator{\diam}{diam}
\DeclareMathOperator{\rank}{rank}
\DeclareMathOperator{\loc}{loc}
\DeclareMathOperator{\diag}{diag}
\DeclareMathOperator{\spt}{spt}
\DeclareMathOperator{\dist}{d}
\title{Non-classical 
 solutions of the $p$-Laplace equation}
\author[M. Colombo and  R. Tione]{Maria Colombo \and Riccardo Tione}
\address{Maria Colombo
\hfill\break EPFL B, Station 8, CH-1015 Lausanne, CH}
\email{maria.colombo@epfl.ch}
\address{Riccardo Tione  
\hfill\break  EPFL B, Station 8, CH-1015 Lausanne, CH}
\email{riccardo.tione@epfl.ch}
\begin{document}

\maketitle

\begin{abstract}
In this paper we answer Iwaniec and Sbordone's conjecture \cite{IB94} concerning very weak solutions to the $p$-Laplace equation. Namely, on one hand we show that distributional solutions of the $p$-Laplace equation in $W^{1,r}$ for $p \neq 2$ and $r>\max\{ 1,p-1\}$ are classical weak solutions if their weak derivatives belong to certain cones. On the other hand, we construct via convex integration non-energetic distributional solutions if this cone condition is not met, thus answering negatively Iwaniec and Sbordone's conjecture in general.
\end{abstract}
\par
\medskip\noindent
\textbf{Keywords:} very weak solutions, $p$-Laplace, convex integration.
\par
\medskip\noindent
{\sc MSC (2020): 35D30, 35J15, 35J60, 35J70
\par
}

\section{Introduction}
The $p$-Laplace equation
\begin{equation}
\label{eqn:p-lapl}
\Delta_p u \doteq \dv(|D u|^{p-2} D u) =0 \qquad \mbox{in } \Omega
\end{equation}
which formally corresponds to the Euler-Lagrange equation of the energy
$$\int_\Omega |Du|^p\, dx$$
is one of the most well studied problems in the Calculus of Variations. The classical regularity theory has been achieved in a series of papers of f N. N. Uraltseva \cite{Ur}, K. Uhlenbeck \cite{Uh}, and L. C. Evans \cite{E} for $p\geq 2$, and of J. L. Lewis \cite{L} and P. Tolksdorf \cite{T} for $p>1$  (see also \cite{DB,DBTr84}). To mention some of the milestones obtained for $p$-laplacian-type problems, abandoning the goal to be exaustive, we may cite the counterexamples to regularity of vectorial problems, the Harnack inequality, the partial regularity theory for vectorial problems, the estimates of the singular set, the Calder\'on-Zygmund theory (see \cite{SvYa00
,DuMi04,Li06,DuKrMi07
,Mi11,KuMi18
,Mi06Dark} and the references cited therein). The results for the $p$-Laplace equation have become a paradigm to attack several more complex problems, including the parabolic associated equation, fractional versions of the same equation, functionals with variable exponent $p(x)$, double phase functionals, free boundary problems involving $p$-energies to cite a few.
\\
\\
Given a bounded open set $\Omega \subseteq \R^n$ every distributional solution of the Laplace equation
$$\Delta u=0 \qquad \mbox{in } \Omega$$
is known by Weyl's lemma to be a classical, and therefore also analytic, solution. 
For the $p$-Laplace equation \eqref{eqn:p-lapl}
an analogous result was conjectured by T. Iwaniec and C. Sbordone in \cite{IB94} (see also \cite[Section 9]{Li06}). Indeed the authors conjectured that distributional solutions of the $p$-Laplace equation in $W^{1,r}$ for $p \neq 2$ and $r>\max\{ 1,p-1\}$ (notice that for such solutions one can give the natural distributional meaning to the equation) are of finite energy and hence belong, in the interior of the domain, to $C^{1,\alpha}$ for some $\alpha>0$, according to the classical regularity theory
. This conjecture has been proven true when $r$ is sufficiently close to $p$, namely for $p-\delta<r <p$ for some $\delta$, which depends only on $n$ and $p$.
This result was first obtained in \cite{IB94} via a quantitative version of the Hodge decomposition theorem, previously introduced by Iwaniec \cite{Iw92} in the context of quasiregular mappings (see also \cite{IM93,IM96}). A different approach was then followed by Lewis \cite{Le93}, based on a quantitative version of the Lipschitz truncation.

Our first main result, which is the content of Section \ref{sec:class}, gives a positive answer to the conjecture of \cite{IB94}, under the additional condition \eqref{posbound}:

\begin{theorem}\label{rigidityintro}
Let $p >1$, $\Omega$ be an open and bounded subset of $\R^n$ and $f \in L^1(\Omega)$. Suppose $u \in W^{1,\max\{1,p-1\}}(\Omega)$ solves
\begin{equation}\label{PDE}
\dv(|Du|^{p-2}Du) = f
\end{equation}
in the weak sense on $\Omega$ and, for all $1\le i \le n$, there exist constants $\sigma_i \in \{1,-1\}, L_i \in \R$ such that
\begin{equation}\label{posbound}
\sigma_i\partial_iu \ge L_i \text{ a.e. in $\Omega$}.
\end{equation}
Then $u \in W^{1,p}_{\loc}(\Omega)$ and for all open $\Omega'$ compactly contained in $\Omega$ there exists a constant $C>0$ depending on $n$, $R$, and $\dist(\Omega',\partial\Omega))$, where $R> 0$ is such that $\Omega \subset B_R(0)$, for which
\begin{equation}
\label{ts:rigid}
\int_{\Omega'}|Du|^{p}(x)dx \le C(\|f\|_{L^1(\Omega)} + \|Du\|^{p-1}_{L^{p-1}(\Omega)})(\|u\|_{L^1(\Omega)} + |(L_1,\dots,L_n)|).
\end{equation}

\end{theorem}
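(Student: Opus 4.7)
My strategy is a weighted Caccioppoli estimate obtained by testing \eqref{PDE} against a mollification of $u$ itself; the $L^\infty_{\loc}$ bound on $u$ is the key payoff of the cone condition \eqref{posbound}. Since the $p$-Laplace operator is invariant under coordinate reflections $x_i\mapsto -x_i$, I first reduce to the case $\sigma_i=1$ for all $i$. Setting $v(x):=u(x)-\sum_i L_ix_i$, the cone condition becomes $\partial_i v\ge 0$ a.e., so $v$ is non-decreasing in each variable separately. This monotonicity forces $v\in L^\infty_{\loc}(\Omega)$ quantitatively: for $x\in\Omega'$, $v(x)$ is squeezed between averages of $v$ over the sets $\{y\in\Omega:y\ge x\}$ and $\{y\in\Omega:y\le x\}$, each of which has measure bounded below in terms of $\dist(\Omega',\partial\Omega)$. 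Combined with $u=v+L\cdot x$, this yields
\[
\|u\|_{L^\infty(\Omega')}\le C(\Omega,\Omega')\bigl(\|u\|_{L^1(\Omega)}+|L|\bigr).
\]

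Formally testing \eqref{PDE} against $\eta^p u$ with $\eta\in C_c^\infty(\Omega)$, $\eta\equiv 1$ on $\Omega'$, gives
\[
\int\eta^p|Du|^p=\int f\eta^p u - p\int\eta^{p-1}u\,|Du|^{p-2}Du\cdot D\eta,
\]
which combined with the $L^\infty$ bound on $u$ immediately delivers \eqref{ts:rigid}. The main technical obstacle is that $\eta^p u$ is not a priori an admissible test function when $r<p$: the product $|Du|^{p-2}Du\cdot Du$ is exactly $|Du|^p$, which is the quantity we wish to bound.

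To circumvent this I plan to test instead against $\eta^p u_\eps$ with $u_\eps:=u*\rho_\eps$, which is smooth and therefore admissible, obtaining
\[
\int\eta^p|Du|^{p-2}Du\cdot Du_\eps=\int f\eta^p u_\eps - p\int\eta^{p-1}u_\eps|Du|^{p-2}Du\cdot D\eta.
\]
The right-hand side converges as $\eps\to 0$ by dominated convergence using $u\in L^\infty_{\loc}$. The left-hand side has no apparent sign, but decomposing via $Du=Dv+L$, $Du_\eps=Dv_\eps+L$ gives
\[
|Du|^{p-2}Du\cdot Du_\eps=|Du|^{p-2}Dv\cdot Dv_\eps+|Du|^{p-2}L\cdot Dv_\eps+|Du|^{p-2}L\cdot Dv+|L|^2|Du|^{p-2},
\]
in which the first summand is pointwise non-negative, being $|Du|^{p-2}\sum_i\partial_i v\cdot(\partial_i v)_\eps\ge 0$. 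The remaining three summands are either $\eps$-independent or linear in $Dv_\eps$, and they pass to their natural limits in $L^1$ by H\"older with dual exponents $(p-1)/(p-2)$ and $p-1$ -- precisely the step where $r>p-1$ is used, ensuring $Du,Dv\in L^{p-1}_{\loc}$.

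Applying Fatou's lemma to the non-negative summand and collecting the remaining convergent terms yields
\[
\int\eta^p|Du|^{p-2}|Dv|^2 + 2L\cdot\int\eta^p|Du|^{p-2}Dv + |L|^2\int\eta^p|Du|^{p-2}\le\int f\eta^p u - p\int\eta^{p-1}u|Du|^{p-2}Du\cdot D\eta,
\]
and by the algebraic identity $|Dv+L|^2=|Du|^2$ the left-hand side is exactly $\int\eta^p|Du|^p$, completing the proof of \eqref{ts:rigid}. For $1<p<2$ the quantity $|Du|^{p-2}$ is singular on $\{Du=0\}$, so the H\"older step must be adapted by keeping $|Du|^{p-2}Du$ together as a single object of magnitude $|Du|^{p-1}$; this is the subcase I expect to cause the most bookkeeping.
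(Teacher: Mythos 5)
Your strategy coincides with the paper's: test the equation with a cutoff times the mollification $u_\eps$, use the cone condition once to get a quantitative $L^\infty_{\loc}$ bound on $u$ via monotonicity (integrating the inequalities $v(z_0-h)\le v(z_0)\le v(z_0+h)$ over an orthant-shaped set of increments $h$), and use it a second time to split the quadratic term $|Du|^{p-2}Du\cdot Du_\eps$ into a pointwise nonnegative part handled by Fatou plus affine-in-$Du_\eps$ remainders that pass to the limit. Your decomposition via $Du=Dv+L$, $Du_\eps=Dv_\eps+L$ is literally the paper's identity $\partial_iu\,\partial_iu_\eps=(\partial_iu-\sigma_iL_i)(\partial_iu_\eps-\sigma_iL_i)+\sigma_iL_i(\partial_iu_\eps+\partial_iu-\sigma_iL_i)$ after your reflection reduction, and the reassembly $|Dv+L|^2=|Du|^2$ is the same bookkeeping. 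For $p\ge 2$ your argument is complete.

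For $1<p<2$, however, there is a genuine gap that your closing remark does not resolve. Your four-term decomposition produces the summands $|L|^2|Du|^{p-2}$ and $|Du|^{p-2}L\cdot Dv_\eps$ (and part of $|Du|^{p-2}L\cdot Dv$), which carry a bare factor $|Du|^{p-2}$ with negative exponent and no compensating full factor of $Du$; these need not be in $L^1$ near $\{Du=0\}$ (indeed $|Du|^{p-2}=+\infty$ there), so the terms you want to "pass to their natural limits in $L^1$" may not even be defined, and your proposed fix of keeping $|Du|^{p-2}Du$ together as an object of size $|Du|^{p-1}$ simply cannot be applied to them. Moreover your H\"older exponents $(p-1)/(p-2)$ and $p-1$ are negative, respectively less than $1$, in this range. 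The missing idea is the paper's splitting of the domain into $\{|Du|<1\}$ and $\{|Du|\ge 1\}$: on $\{|Du|<1\}$ one passes to the limit directly, with no decomposition at all, because $|Du|^{p-2}\partial_iu$ is bounded there and $\partial_iu_\eps\to\partial_iu$ in $L^1_{\loc}$; on $\{|Du|\ge 1\}$ one has $|Du|^{p-2}\le 1$ (for $p<2$; and $|Du|^{p-2}\le|Du|^{p-1}\in L^1$ for $p>2$), so every piece of the decomposition is integrable and the Fatou/limit argument goes through. With that modification your proof matches the paper's. A minor additional point: the paper derives the monotonicity sandwich for the smooth functions $u'_\eps$ rather than for $u$ itself, which avoids having to justify the pointwise multivariate monotonicity of an a.e.-defined Sobolev representative.
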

Iwaniec and Sbordone's conjecture has a similar flavor of a well understood problem on 
elliptic equations
\begin{equation}
\label{eqn:unif-ell}
\dv(A(x)Du(x)) = 0  \qquad \mbox{in } \Omega,
\end{equation}
with $\lambda I \leq A \leq \Lambda I $. J. Serrin \cite{Se64} provided a striking example, for every $q\in (1,2)$ and $n\geq 2$, of an equation of the form \eqref{eqn:unif-ell} which has an \emph{unbounded} solution. Hence, such solution cannot belong to $ W^{1,2}(\Omega)$ in view of the results of E. De Giorgi and J. Nash. However, in this context subsequent results \cite{HR72,Br08,An09} showed that the situation is completely different as soon as suitable continuity of $A$ is assumed: in this case, every $W^{1,1}$ solution is necessarily of finite energy, namely  $ W^{1,2}(\Omega)$, and the regularity theory applies. 
These results do not provide any clear intuition on the problem we are considering for the $p$-laplacian for two main reasons: on one side, because the freedom in the choice of $A$ is missing in our context, and secondly because it is not clear how to interpret the positive results about the continuous coefficients.
\\
\\
Our second main result, which occupies the rest of the manuscript, shows the sharpness of assumption \eqref{posbound}, see \eqref{dery}, and provides a negative answer to the above-stated conjecture in its full generality.

\begin{theorem}\label{tmain}
Let $\Omega \subset \R^2$ be a ball. For every $p \in (1,\infty), p \neq 2$, there exists $\eps = \eps(p) > 0$ and a continuous $u \in W^{1,p-1 + \eps}(\Omega)$ such that $u$ is affine on $\partial\Omega$,
\begin{equation}\label{dery}
\frac{3}{4} \le \partial_yu \le \frac{5}{4}, \quad \text{a.e. on }\Omega,
\end{equation}
\begin{equation}\label{sol:conv}
\dv(|Du|^{p-2}Du) = 0
\end{equation}
in the sense of distributions, but for all open $B \subset \Omega$
\begin{equation}\label{sol:div}
\int_{B}|Du|^pdx = + \infty.
\end{equation}
\end{theorem}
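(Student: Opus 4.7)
My plan is to recast \eqref{sol:conv} as a differential inclusion on a two-component map and then solve the inclusion by convex integration, in the spirit of the existing schemes for geometric PDEs (M\"uller--\v{S}ver\'ak, Kirchheim, Sz\'ekelyhidi). Since $\Omega\subset\R^2$ is simply connected and any distributional solution with $u\in W^{1,p-1+\eps}$ has $|Du|^{p-2}Du\in L^1_{\loc}$, there exists a stream function $v$ with $|Du|^{p-2}\partial_xu=-\partial_yv$ and $|Du|^{p-2}\partial_yu=\partial_xv$. Setting $w=(u,v)$, the equation \eqref{sol:conv} is equivalent to the pointwise constraint $Dw(x)\in K_p$ for a.e.\ $x\in\Omega$, where
\begin{equation*}
K_p \;=\; \left\{\, \begin{pmatrix} a & b \\ |(a,b)|^{p-2}b & -|(a,b)|^{p-2}a \end{pmatrix} \,:\, (a,b)\in\R^2 \,\right\}.
\end{equation*}
In this language \eqref{dery} becomes $b\in[3/4,5/4]$, and \eqref{sol:div} becomes the local non-integrability of the parameter $|(a,b)|$ in $L^p$.

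The decisive geometric step is to exhibit enough compatibility inside the restricted set $K_p^\star:=K_p\cap\{b\in[3/4,5/4]\}$ to drive convex integration. Because $p\neq 2$, the set $K_p$ is a genuinely nonlinear surface and one expects it to contain a $T_N$-configuration in the sense of Tartar--Scheffer: matrices $A_1,\dots,A_N\in K_p^\star$ and positive coefficients together with a centre in their convex hull which is reached from each $A_j$ by a chain of rank-one steps remaining in the hull. The outcome of this step should be an open set $U$ sitting in the interior of the rank-one convex hull of such a configuration and intersecting the slice $\{b=1\}$. When $p=2$ the surface $K_p$ degenerates to a linear subspace and no such $U$ can exist, in agreement with Weyl's lemma; analogously, the cone condition of Theorem~\ref{rigidityintro} is precisely what forbids the rank-one chains needed here, so the existence of $U$ is the quantitative counterpart of the sharpness of \eqref{posbound}.

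Granting such a configuration and such a $U$, I would run an iterative convex integration scheme. Initialise with the affine map $w_0(x,y)=(y,\,c_0 x)$, where $c_0$ is chosen so that $Dw_0\in U$; this ensures that $u_0$ is affine (hence the boundary condition), that $\partial_yu_0=1\in[3/4,5/4]$, and that the scheme starts at an admissible point. Produce piecewise affine $w_k=(u_k,v_k)$ by replacing $Dw_k$ on a fine grid by oscillations along the $T_N$-configuration, arranging that (i) $w_k=w_0$ on $\partial\Omega$; (ii) $Dw_k\in U$ a.e.; (iii) $w_k\to w$ uniformly, so $u$ is continuous; (iv) $w_k$ is bounded in $W^{1,p-1+\eps}$ for some $\eps=\eps(p)>0$ coming from the geometry of $U$; and (v) on each ball $B\subset\Omega$ the inserted oscillations contribute, at each step, a controlled amount of $L^p$-mass that does not cancel in the limit, so that $\int_B|Du|^p\,dx=\infty$. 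Since $Dw\in K_p$ a.e.\ at the limit, the definition of $K_p$ yields $|Du|^{p-2}Du=\nabla^\perp v$ in $L^1_{\loc}$, hence $\dv(|Du|^{p-2}Du)=\dv(\nabla^\perp v)=0$ in $\mathcal{D}'(\Omega)$; items (i)--(v) give the remaining conclusions.

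The main obstacle is the first geometric step: producing explicitly, for each $p\in(1,\infty)\setminus\{2\}$, a $T_N$-configuration inside $K_p^\star$ whose rank-one convex hull has non-empty interior meeting $\{b=1\}$. This reduces to a system of algebraic equations for $N$ parameters $(a_j,b_j)$ on the non-linear surface $K_p$ together with the rank-one connections between the corresponding matrices; both $\eps(p)$ and the usable size of $U$ are dictated by this geometry and must degenerate as $p\to 2$. Once this geometric input is secured, the remaining convex integration is fairly standard, although one still has to tune the scales carefully so that the $L^p$-blow-up happens on every ball while the $L^{p-1+\eps}$-norms stay summable and the boundary values are preserved.
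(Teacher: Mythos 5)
Your reformulation of \eqref{sol:conv} as the differential inclusion $Dw\in K_p$ is exactly the paper's starting point (Proposition \ref{prop:translation}), but the geometric mechanism you propose next cannot work. A $T_N$-configuration consists of finitely many matrices, so its rank-one convex hull, and a fortiori any open set $U$ contained in it, is \emph{bounded}. If your scheme keeps $Dw_k\in U$ a.e.\ with $U$ bounded, the limit map $w$ is Lipschitz, hence $Du\in L^\infty(\Omega)\subset L^p(B)$ for every ball $B$, directly contradicting \eqref{sol:div}. Your items (iv) and (v) are therefore incompatible with each other under your own hypotheses: there is no way to make the $L^p$-mass of $Du$ diverge on every ball while the gradients stay in a bounded set. $T_N$-configurations are the right tool for \emph{oscillation} counterexamples (Lipschitz solutions that are nowhere $C^1$, as in M\"uller--\v{S}ver\'ak), not for the \emph{concentration} phenomenon required here; the paper points out this distinction explicitly in the introduction.

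What the paper actually uses is Faraco's \emph{staircase laminate}: an infinite sequence of laminates of finite order whose supports contain matrices $A_i,B_i,D_i$ with $|A_i|\sim i^{\max\{2,2(p-1)\}}\to\infty$, carrying probabilities decaying like $i^{-2G_p(a)-1}$. The unboundedness of the support is precisely what makes $\int|X|^{1+\eps}\,d\mu<\infty$ while $\int|X|^{p/(p-1)}\,d\mu=\infty$, which after the in-approximation and convex integration steps translates into $u\in W^{1,p-1+\eps}$ with $\int_B|Du|^p=+\infty$ (the latter is in fact deduced indirectly, via the essential discontinuity of $Dw_\infty$ on every open set combined with the classical regularity theory). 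Beyond this structural mismatch, your proposal also leaves its one nontrivial geometric input (existence of the $T_N$-configuration in $K_p^\star$) entirely unproven, so even as a programme for a different theorem it is incomplete. To repair the argument you would need to replace the $T_N$-configuration by a family of staircase laminates with unbounded support, parametrized over an open set of initial data, and prove openness of the associated maps --- which is the technical core of Sections \ref{sec:def}--\ref{sec:proplam} of the paper.
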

Fixed $\alpha \in (0,1)$, one may even construct $u \in C^\alpha(\overline{\Omega})$, see \cite[Lemma 2.1]{AFSZ} for details. In order not to add further technical details to the construction, we will content ourselves in proving Theorem \ref{tmain}.
The method we employ to show Theorem \ref{tmain} is the one of convex integration. Specifically, we are going to use the \emph{staircase laminate} construction that, to the best of our knowledge, was introduced by D. Faraco in \cite{MIL}. Since then, this type of constructions has been used in several contexts, see \cite{AFSZ,CFM,FMCO,CFMM,FRT}, and is tailored to tackle problems in which \emph{concentration} phenomena appear. Similar techniques were developed to find counterexamples in which \emph{oscillation} phenomena are the issue. Namely, while the staircase laminate construction deals, roughly speaking, with finding maps which are in some $W^{1,p}$ space but no better, similar methods can be used, for instance, to find maps which Lipschitz and not $C^1$ on any open set, see for instance \cite{SMVS,LSP,COR,JTR,ST}. Instead of outlining our proof here we defer the discussion to Section \ref{STRAT}, where the structure of the part of the paper devoted to the construction of the counterexample will be explained in detail.

\section{When are very weak solutions classical solutions?}\label{sec:class}

This section is devoted to the proof of Theorem~\ref{rigidityintro}, which shows Iwaniec and Sbordone's conjecture assuming the additional condition that each component of $D u$ has a one sided, uniform bound \eqref{posbound}.

\begin{proof}[Proof of Theorem~\ref{rigidityintro}]
Fix $\Omega' \subset \Omega'' \subset \Omega''' \subset \Omega$ compactly contained open sets with 
\begin{equation}\label{dist'}
\dist(\Omega',\partial\Omega) \le 2\dist(\Omega''',\partial\Omega).
\end{equation}
Consider a radial and positive smooth mollification kernel $\rho_\eps$ with support in $B_\eps(0)$, $\forall \eps > 0$ and define the convolution of $u$ with $\rho_\eps$
\[
u_\eps(x) \doteq (u\star \rho_\eps)(x) \qquad \mbox{for every } x\in \Omega'''.
\]
 The latter is well defined as soon as $\eps < \dist(\Omega''',\partial \Omega)$. Observe that \eqref{posbound} still holds for $u_\eps$. 
 
 Let $\psi \in C^\infty_c(\Omega'')$ be such that $\psi(x) \in [0,1]$ for $x \in \Omega$, $\psi \equiv 1$ on $\Omega'$ and 
$\|D\psi\|_{L^\infty(\Omega)} \le {c}{(\dist(\Omega',\partial\Omega))^{-1}}$.
 We test the weak form of \eqref{PDE} with the test function $ u_\eps \psi$, to obtain
\begin{align*}
\sum_{i = 1}^n\int_{\Omega}|Du|^{p-2}(x)\partial_iu(x)\partial_i(u_\eps(x)\psi(x))dx = \int_{\Omega}fu_\eps\psi dx,
\end{align*}
which is equivalent to
\begin{equation}\label{splitalpha}
\sum_{i = 1}^n\int_{\Omega \cap\{ |Du|<1\}}|Du|^{p-2}\partial_iu\partial_iu_\eps\psi dx + \sum_{i = 1}^n\int_{\Omega\cap\{ |Du|\geq1\}}|Du|^{p-2}\partial_iu\partial_iu_\eps\psi dx = \int_{\Omega} (f\psi  + g) u_\eps dx,
\end{equation}
where
\[
g\doteq - \sum_{i = 1}^n|Du|^{p-2}\partial_iu\partial_i\psi.
\]
Notice that the artificial splitting of $\Omega$ in $\Omega \cap\{ |Du|<1\}$ and $\Omega \cap\{ |Du|\geq 1\}$ is necessary since, if $p<2$, one may have $|Du|^{p-2} \notin L^1(\Omega)$.
%
The first term in the left-hand side of \eqref{splitalpha} passes to the limit since $\partial_i u_\eps \to \partial_i u$ in $L^1(\Omega''')$ as $\eps \to 0 $:
\begin{equation}
\label{second:est}
\lim_{\eps\to 0 } \sum_{i = 1}^n\int_{\Omega \cap\{ |Du|<1\}}|Du|^{p-2}\partial_iu\partial_iu_\eps\psi dx =\int_{\Omega \cap\{ |Du|<1\}}|Du|^{p}\psi dx.
\end{equation}
Finally, we exploit \eqref{posbound} by writing
\begin{align*}
\sum_{i = 1}^n\int_{\Omega\cap\{ |Du|\geq1\}}|Du|^{p-2}\partial_iu\partial_iu_\eps\psi dx &= \sum_{i = 1}^n\int_{\Omega\cap\{ |Du|\geq1\}}|Du|^{p-2}(\partial_iu - \sigma_iL_i)(\partial_iu_\eps -\sigma_iL_i)\psi dx 
\\&+ \sum_{i =1}^n\sigma_iL_i\int_{\Omega\cap\{ |Du|\geq1\}}|Du|^{p-2}(\partial_iu_\eps+\partial_iu-\sigma_i L_i)\psi dx.
\end{align*}
Using our choice of $\Omega \cap \{ |Du|\geq1\}$ and $Du \in L^{\max\{1,p-1\}}(\Omega)$, the last  addendum converges to
\[
\sum_{i =1}^n\sigma_iL_i\int_{\Omega\cap\{ |Du|\geq1\}}|Du|^{p-2}(2\partial_iu-\sigma_i L_i)\psi dx.
\]
Moreover, by \eqref{posbound}, we can use Fatou's Lemma to bound from below the first addendum. Therefore, we find
\begin{equation}\label{fatou}
\int_{\Omega'\cap\{ |Du|\geq1\}}|Du|^{p}dx \le \int_{\Omega\cap\{ |Du|\geq1\}}|Du|^{p}\psi dx \le \liminf_{\eps \to 0^+}\sum_{i = 1}^n\int_{\Omega\cap\{ |Du|\geq1\}}|Du|^{p-2}\partial_iu\partial_iu_\eps\psi dx.
\end{equation}
We now estimate the right hand-side of \eqref{splitalpha}. Let $R>0$ be such that $\Omega \subset B_R(0)$. Using the definition of $g$, we find a constant $C' = C'(R,\dist(\Omega',\partial \Omega))$ such that for all $\eps > 0$
\begin{equation}\label{first:est}
\left|\int_{\Omega}(f\psi + g) u_\eps dx\right| \le \|f\|_{L^1(\Omega)}\|u_\eps\|_{L^\infty(\Omega'')} + C'\|Du\|^{p-1}_{L^{p-1}(\Omega)}\|u_\eps\|_{L^\infty(\Omega'')}.
\end{equation}
We claim the existence of a constant $C = C(n,R,\dist(\Omega',\partial \Omega))$ such that for all $0 <\eps < \dist(\Omega''',\partial\Omega)$,
\begin{equation}\label{monotone}
\|u_\eps\|_{L^\infty(\Omega'')} \le C\|u_{\eps}\|_{L^1(\Omega''')} + C|(L_1,\dots, L_n)|.
\end{equation}
Indeed, we define 
\[
L \doteq (\sigma_1L_1,\dots, \sigma_nL_n),\quad u'_{\eps} = u_\eps - (L,x),
\] 
and we show that
\begin{equation}\label{monotone1}
\|u_\eps'\|_{L^\infty(\Omega'')} \le C\|u'_\eps\|_{L^1(\Omega''')},
\end{equation}
which yields \eqref{monotone} for a possibly larger constant $C$. The advantage now is that for all $x \in \Omega'''$ and all $\eps \le \dist(\Omega''',\partial \Omega)$, by \eqref{posbound}
\[
\sigma_i\partial_iu_\eps' \ge 0.
\]
 We consider the set 
\[
S \doteq \left\{h = (h_1,\dots,h_n): \sigma_i h_i \ge 0,\forall i, \max_i\{|h_i|\} \le \frac{\dist(\Omega'',\partial \Omega)}{2}\right\}.
\]
Notice that its volume $|S| = 2^{-n}{\dist(\Omega'',\partial \Omega)^n}
$ is bounded from below by $4^{-n}{\dist(\Omega',\partial \Omega)^n}$. For every $z_0 \in \Omega''$ and $h \in S$,  we estimate $u'_\eps(z_0 - h) \le u'_\eps(z_0) \le u'_\eps(z_0 + h)$.
Integrating on $S$ we obtain
\[
\int_Su'_\eps(z_0 - h)dh\le |S|u'_\eps(z_0) \le \int_Su'_\eps(z_0 + h)dh,
\]
which yields \eqref{monotone}. We now use \eqref{monotone} in \eqref{first:est} to write, for $C$ which is possibly larger than $C$ and $C'$ above,
\begin{equation}\label{first:estdef}
\begin{split}
\limsup_{\eps \to 0}\left|\int_{\Omega}(f\psi +g) u_\eps dx\right| 
&\le C(\|f\|_{L^1(\Omega)} + \|Du\|^{p-1}_{L^{p-1}(\Omega)})(\limsup_{\eps \to 0}\|u_\eps\|_{L^1(\Omega''')} + |(L_1,\dots, L_n)|)
\\&\le C(\|f\|_{L^1(\Omega)} + \|Du\|^{p-1}_{L^{p-1}(\Omega)})(\|u\|_{L^1(\Omega)} + |(L_1,\dots, L_n)|).
\end{split}
\end{equation}
Hence by letting $\eps \to 0$ in \eqref{splitalpha} and combining the lower bound for the left-hand side \eqref{second:est}-\eqref{fatou} and the upper bound for the right-hand side \eqref{first:estdef}, we obtain \eqref{ts:rigid}.
\end{proof}

Before ending this section, we wish to remark a connection between our Theorem \ref{rigidityintro} and the compensation results appearing in the very recent paper \cite{GRS}. A first observation in this regard, which will also be at the basis of the next Sections, is to rewrite \eqref{sol:conv} as a differential inclusion, namely to translate the differential problem \eqref{sol:conv} to that of finding a solution $w \in W^{1,1}(\Omega,\R^2)$ to
\begin{equation}\label{diffinc}
Dw(x) \in K, \quad \text{for a.e. $x \in \Omega$},
\end{equation}
for some suitable set $K \subset \R^{2\times 2}$. The set we will consider is:
\begin{equation}\label{Kp}
K_p \doteq \left\{\left(\begin{array}{cc}x & y \\ |(x,y)|^{p-2}y & -|(x,y)|^{p-2}x \end{array}\right): x,y \in \R\right\}.
\end{equation}
The equivalence between \eqref{sol:conv} and \eqref{diffinc} is achieved once we interpret \eqref{sol:conv} in the equivalent form:
\begin{equation*}\label{divcurl}
0 = \dv(|Du|^{p-2}Du) = \curl(|Du|^{p-2}JDu),\quad \text{where } J = \left(\begin{array}{cc} 0 & 1\\ -1 & 0\end{array}\right).
\end{equation*}
A simple application of Poincar\'e's Lemma yields then the following equivalence:

\begin{prop}\label{prop:translation}
Let $\Omega \subset \R^2$ be convex and $u \in W^{1,\max\{1,p-1\}}(\Omega)$. Then,
\[
\dv(|Du|^{p-2}Du) = 0
\]
in the sense of distributions if and only if there exists $v \in W^{1,1}(\Omega)$ such that $w \doteq (u,v) \in W^{1,1}(\Omega,\R^2)$ solves
\begin{equation}\label{diffincp}
Dw \in K_p\quad \text{ a.e. in $\Omega$}.
\end{equation}
Moreover, for all $q \ge \max\{1,p-1\}$, $u \in W^{1,q}(\Omega)$ if and only if $w \in W^{1,\frac{q}{\max\{1,p-1\}}}$, and there exist positive constants $c_1 < c_2$ such that
\begin{equation}\label{esteq}
c_1\|u\|_{W^{1,q}(\Omega)} \le \|w\|_{W^{1,\frac{q}{\max\{1,p-1\}}}(\Omega,\R^2)} \le c_2\|u\|_{W^{1,q}(\Omega)}.
\end{equation}
\end{prop}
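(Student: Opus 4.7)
The proposition is a concrete instance of the classical fact that in two dimensions, divergence-free $L^1$ vector fields are rotated gradients. The first step will be to check the integrability we need to make sense of the distributional divergence: since $\Omega$ is bounded and $u \in W^{1,\max\{1,p-1\}}(\Omega)$, the vector field $V := |Du|^{p-2}Du$ satisfies $|V| = |Du|^{p-1}$, which lies in $L^{\max\{1,p-1\}/(p-1)}(\Omega) \subset L^1(\Omega)$ in both the regimes $p \ge 2$ and $1 < p < 2$. So $V \in L^1(\Omega,\R^2)$ and $\dv V$ is a well-defined distribution.

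Next I would use the elementary identity $\dv V = -\curl(JV)$ valid for any $L^1$ vector field $V$ on a planar domain (just unpacking the definition of $J$ and testing against a $C^\infty_c$ function). This turns $\dv(|Du|^{p-2}Du)=0$ into $\curl(|Du|^{p-2}J Du)=0$ in $\mathcal{D}'(\Omega)$. At this point I would invoke the Poincar\'e lemma for curl-free $L^1$ vector fields on a simply connected (here convex) planar domain to produce a function $v \in W^{1,1}(\Omega)$, unique up to an additive constant, with $Dv = |Du|^{p-2}J Du$. Writing out the two components gives $\partial_1 v = |Du|^{p-2}\partial_2 u$ and $\partial_2 v = -|Du|^{p-2}\partial_1 u$, so with $w := (u,v)$ one reads off
\[
Dw(x) = \begin{pmatrix} \partial_1 u & \partial_2 u \\ |Du|^{p-2}\partial_2 u & -|Du|^{p-2}\partial_1 u \end{pmatrix}(x) \in K_p, \quad \text{a.e.}
\]
For the converse it suffices to observe that membership $Dw \in K_p$ a.e.\ forces the second row of $Dw$ to equal $|Du|^{p-2}J Du$; since the second row is a distributional gradient, its curl vanishes, and by the same identity $\dv(|Du|^{p-2}Du) = 0$ in $\mathcal{D}'(\Omega)$.

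For the norm equivalence \eqref{esteq}, I would use the explicit form of $K_p$: the first row of $Dw$ is $Du$, and the second row has modulus $|Du|^{p-1}$. Thus pointwise
\[
|Du| \le |Dw| \le \sqrt{2}\bigl(|Du| + |Du|^{p-1}\bigr).
\]
If $u \in W^{1,q}(\Omega)$ with $q \ge \max\{1,p-1\}$, then $|Du|^{p-1} \in L^{q/(p-1)}(\Omega)$, while $|Du| \in L^q(\Omega) \hookrightarrow L^{q/(p-1)}(\Omega)$ for $p \ge 2$ by boundedness of $\Omega$; for $1 < p < 2$ we have $\max\{1,p-1\}=1$ and $L^{q/(p-1)}(\Omega) \hookrightarrow L^q(\Omega)$ instead. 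In both cases $|Dw| \in L^{q/\max\{1,p-1\}}(\Omega)$, and after normalizing $v$ to have zero mean (using the Poincar\'e inequality on the bounded domain $\Omega$ to control $\|v\|_{L^1}$ by $\|Dv\|_{L^1}$) one obtains the two-sided bound in \eqref{esteq}. The converse bound, going from $w$ to $u$, is immediate from $|Du| \le |Dw|$.

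The only non-routine step is the appeal to Poincar\'e's lemma at such low integrability, but this is standard on a convex planar domain: one can either mollify $V$, produce potentials $v_\eps$ on a fixed compactly contained subdomain, normalize by their means, and pass to the limit; or one can simply invoke the known formulation of the Poincar\'e lemma for $L^1_{\loc}$ curl-free fields, supplemented by the $L^1$ Poincar\'e inequality on $\Omega$ to upgrade $W^{1,1}_{\loc}$ to $W^{1,1}(\Omega)$. The rest of the argument is a direct calculation.
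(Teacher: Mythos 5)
Your proof plan takes the same route the paper outlines just before the proposition: rewrite $\dv V = 0$ as $\curl(JV) = 0$, invoke the Poincar\'e lemma on the convex (hence simply connected) domain to obtain a potential $v$ with $Dv = |Du|^{p-2}J Du$, and then read off the structure of $K_p$ directly. The paper does not give a detailed proof (it simply states ``A simple application of Poincar\'e's Lemma yields then the following equivalence''), so your write-up fills in exactly what was intended, and the forward and backward implications are handled correctly, including the point that the converse needs only that the second row of $Dw$ is a distributional gradient and therefore curl-free.

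One small slip in the norm equivalence: for $p\ge 2$ you cannot deduce $u\in W^{1,q}$ from $w\in W^{1,q/(p-1)}$ via ``$|Du|\le|Dw|$'' alone, since $q/(p-1)\le q$ and that only gives the weaker $Du\in L^{q/(p-1)}$. The right way round is to use the \emph{second} row of $Dw$: membership in $K_p$ gives $|Du|^{p-1}\le|Dw|$, hence $|Du|^{p-1}\in L^{q/(p-1)}$, i.e.\ $|Du|\in L^q$. This is an easy fix and the rest of the pointwise comparison
\[
|Du| \le |Dw| \le |Du| + |Du|^{p-1}
\]
is exactly what is needed; note also that the constants in \eqref{esteq} are permitted to depend on $u$ (as is clear from how the inequality is later invoked), so the apparent mismatch in homogeneity between $\|Du\|_{L^q}$ and $\|Du\|_{L^q}^{p-1}$ is not an issue.
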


We will sketch how a slightly simplified version of Theorem \ref{rigidityintro} for $n = 2$ can be deduced from \cite[Corollary 4.5]{GRS}. This states that, if
\[
A = A(x) = \left(\begin{array}{cc}a_{11}(x)&a_{12}(x)\\ a_{21}(x) & a_{22}(x)\end{array}\right)  \in C^\infty_c(\R^2,\R^{2\times 2})
\]
with $a_{11} \ge 0$, $a_{22}\le 0$ everywhere on $\R^2$, then
\begin{equation}\label{inbog}
-\int_{\R^2}\det(A) \le \|\curl A_1\|_{L^1(\R^2)}\|\curl A_2 \|_{L^1(\R^2)},
\end{equation}
where $A_i$ denotes the $i$-th row of $A$. Let us define the convex sets
\[
Y = \left\{A = \left(\begin{array}{cc}a_{11}&a_{12}\\ a_{21} & a_{22}\end{array}\right): a_{11}, a_{12}, a_{21}\ge 0, a_{22}\le 0\right\} \subset X = \left\{A = \left(\begin{array}{cc}a_{11}&a_{12}\\ a_{21} & a_{22}\end{array}\right): a_{11}\ge 0, a_{22}\le 0\right\}.
\]
Suppose now that we are given $u \in W^{1,\max\{1,p-1\}}(\Omega)$ satisfying \eqref{PDE} for $f = 0$ and \eqref{posbound} with $\sigma_1 = \sigma_2 = 1$, $L_1 = L_2 = 0$. 
Assumption \eqref{posbound} implies that $Dw \in Y \subset X$, but in order to apply \cite[Corollary 4.5]{GRS} we need to mollify and localize $w$. Consider $\Omega' \subset \Omega'' \subset \Omega$ and a cut-off function $\psi \in C^\infty_c(\Omega'')$ as in the proof above, and define $w_\eps \doteq w\star \rho_\eps$. Finally let
\[
 A_\eps \doteq \psi Dw\star \rho_\eps = D(\psi w_\eps) - w_\eps \otimes D\psi.
\]
We still have that $A_\eps(x) \in Y$ for all $x \in \R^2$, and we estimate, for some $C = C\left(\Omega'',\|Du\|^{\max\{1,p-1\}}_{L^{\max\{1,p-1\}}(\Omega)}\right) > 0$,
\[
\limsup_{\eps \to 0^+}\|\curl A_\eps\|_{L^1(\R^2)} \overset{\eqref{esteq}}{\le} C.
\]
Therefore, we can employ \eqref{inbog} to write
\begin{equation}\label{indet}
-\int_{\Omega}\det(A_\eps)dx \le C.
\end{equation}
Moreover $A_\eps \in Y$ implies $\det(A_\eps) \le 0$ in $\Omega$. Thus we can use Fatou's Lemma to conclude from \eqref{indet} that
\[
-\int_{\Omega}\psi^2 \det(Dw)dx \le C.
\]
As $Dw \in K_p$, we have $\det(Dw) = -|Du|^p$, a.e. in $\Omega$. This gives an alternative proof of Theorem \ref{rigidityintro} in dimension $n=2$.

\section{Convex integration: outline of the strategy of proof of Theorem~\ref{tmain}}\label{STRAT}
The first step in this type of convex integration is to rewrite \eqref{sol:conv} as a differential inclusion as in Proposition~\ref{prop:translation}.
In order to find non-trivial solutions $w$ to \eqref{diffinc}, we can exploit Faraco's staircase laminates. Given an open domain $\omega \subset \R^2$, two distinct $\R^{2\times 2}$ matrices $B,C$ with $\det(B-C) = 0$ and $\lambda  \in (0,1)$, it is possible for all $\eps > 0$ to construct a highly oscillatory Lipschitz and piecewise affine map $f_\eps$ which coincides with any affine map with gradient $A = \lambda B + (1-\lambda)C$ on $\partial \omega$, and such that
\begin{equation}\label{distribution}
Df_\eps \in B_\eps(B)\cup B_\eps(C) \text{ a.e. on $\omega$}
\end{equation}
with a precise estimate on the set of points where $Df_\eps \in B_\eps(B)$ in terms of $\lambda$, see Lemma \ref{sl}. The measure $\mu = \lambda\delta_B + (1-\lambda)\delta_C$ is called a \emph{laminate}. By \emph{splitting} $B$ in another rank-one direction, one obtains a probability measure supported on three (or four) points, which is called a \emph{laminate of finite order} (of order two).  For every laminate of finite order, one can construct a nontrivial family of maps as $f_\eps$ above, see Definition \ref{els}, Lemma \ref{ind}.
\\
\\
A staircase laminate is an element $\mu_n$ of a sequence of laminates of finite order, which is constructed in the following way. Start from a point $A_1 \in \R^{2\times 2}$. First $A_1$ is split into two points, $B_1$ and $E_1$, with $B_1 \in K_p$. $E_1$ is an auxiliary point, which is again split into $C_1 \doteq A_2$ and $D_1 \in K_p$. This yields 
\[
\mu_1 = \lambda_{B_1}\delta_{B_1} + \lambda_{E_1}\lambda_{A_2}\delta_{A_2} + \lambda_{E_1}\lambda_{D_1}\delta_{D_1}.
\]
 Now the \emph{error term} $A_2$ is again split with the same rule through points $B_2 \in K_p$, $D_2 \in K_p$ and a new error term $C_2 \doteq A_3$, which allows us to define $\mu_2$. Inductively, one finds $\mu_n$. In our specific problem, we need $\mu_n$ to fulfill:
\begin{equation}\label{dege}
\int_{\R^{2\times 2}}|X|^{1 + \eps}d\mu_n(X) < + \infty \quad \int_{\R^{2\times 2}}|X|^\frac{p}{p-1}d\mu_n(X) = + \infty,
\end{equation}
and
\begin{equation}\label{error}
\mu_n(\{A_n\}) \to 0, \quad \text{as $n \to \infty$}.
\end{equation}
Conditions \eqref{dege} yield the desired integrability of the solution, while \eqref{error} tells us that the measure $\mu_\infty \doteq \lim_n \mu_n$ is supported  precisely on $K_p$. Unfortunately, as we shall see, this discussion needs some additions to yield an exact solution to the differential inclusion at hand, but it is helpful to understand whether such a laminate can be found in a simpler subset of $K_p$, for instance $K_p\cap \diag(2)$, where $\diag(2)$ denotes the space of $2\times 2$ diagonal matrices. 
\\
\\
We have:
\[
K_p\cap \diag(2) = \left\{\left(\begin{array}{cc}x & 0 \\ 0 & -|x|^{p-2}x \end{array}\right): x,y \in \R\right\}.
\]
Identify diagonal matrices with points of $\R^2$ $(x,y)$, so that for instance every point of $K_p\cap \diag(2)$ is described by the graph $(x,-|x|^{p-2}x)$. Notice that in the space of diagonal matrices rank-one lines are precisely horizontal and vertical lines. Take a sequence of points $A_i \doteq (x_i,y_i)$, with
\[
x_{i + 1} > x_i > 0, \qquad y_{i + 1} > y_i > 0, \quad \forall i \in \N,
\]
to be chosen later. We start by splitting $A_1$ into a vertical direction until we reach a point $B_1 \in K_p \cap \diag(2)$, and the auxiliary point $E_1$. Then, $E_1$ is split horizontally so that it lies in the segment with endpoints $A_2$ and $D_1 \in K_p \cap \diag(2)$. This defines $\mu_1$. Next, given $A_3 = (x_3,y_3)$, we reiterate the reasoning starting from $(x_2,y_2)$. By direct computation, one see that the first condition in \eqref{dege} and \eqref{error} are then equivalent to finding a suitable sequence $\{(x_i,y_i)\}_{i \in \N}$ as above with $x_i,y_i \to \infty$ as $i \to \infty$ such that the following holds, for some small $\eps > 0$:
\[
\lim_{n \to \infty}|(x_n,y_n)|^{1 + \eps} \prod_{k =1}^n\frac{x_{k-1}+y_k^\frac{1}{p-1}}{x_k +y_k^\frac{1}{p-1}}\frac{y_{k-1} + x_{k-1}^{p-1}}{y_k + x_{k-1}^{p-1}} < + \infty.
\]
Notice that for $p= 2$ the latter is impossible, which reflects the validity of Weyl's Lemma. It turns out that the choice $x_i = ai^2$, $y_i = i^{2(p-1)}$ for a suitable positive $a$ that depends on $p$ yields the previous property, and one can directly check that this choice also yields the second condition of \eqref{dege}. $a$ is chosen so that a certain function $G_p(a)$ enjoys particular properties, see Section \ref{choice}. Other sequences $x_i$ and $y_i$ are probably correct, for instance $x_i$ and $y_i$ could be chosen to be of exponential growth, as long as they contain this multiplicative parameter $a$. In our work, the careful choice is exploited in Proposition \ref{ansmag2}, which in turn yields the crucial final estimates of Lemma \ref{upest}.
\\
\\
As said, finding only one staircase laminate is, in general, not enough to finding an exact solution to \eqref{diffinc}. This is due to the fact that the maps constructed through laminates introduce errors in their gradient distribution, see \eqref{distribution}. This is the reason why one considers \emph{in-approximations} of the set $K_p$, using the terminology of \cite{SMVS}, originally due to M. Gromov. Namely, one needs to find a sequence of open sets $U_n$ which \emph{converge} in a suitable sense, see Lemma \ref{conv}, to $K_p$ with the additional property that every point of $U_n$ is the barycenter of a laminate of finite order with suitable integrability properties supported in $U_{n + 1}$. The fact that these sets are open allow one to \emph{absorb} the errors made by property \eqref{distribution}. Once one has a sequence of Lipschitz maps $f_n$ with $Df_n \in U_n$ a.e. which in addition converge strongly in $W^{1,1}$ to a map $f$, it is possible to conclude that $Df \in K_p$ a.e.. The (subsequential) pointwise convergence of the gradients is crucial and is necessary to exploit the convergence of $\{U_n\}$ to $K_p$. It is usually achieved with a mollification trick, see Proposition \ref{induc}\ref{hp3} and the proof of Theorem \ref{tmaintext}.
\\
\\
We will not use the term \emph{in-approximation} in the paper, but our whole effort is precisely based in finding these open sets. The idea is to consider a family of staircase laminates with initial point $P \in Q \subset \R^{2\times 2}$, with $Q$ open. In this case, one obtains endpoints of these laminates of finite order $A_i,B_i,C_i,D_i$, $A_{i + 1} = C_i$, which depend on $P$. The definition of these quantities is in Section \ref{sec:def}. As above, $B_i(P) \in K_p$ and $D_i(P) \in K_p$ for all $P \in Q$, but we will actually need to consider staircase laminates with endpoints lying on the segments connecting $A_i$, $B_i$ and $C_i, D_i$, which do not reach \emph{exactly} $B_i$ and $D_i$, see the definition of the maps $\Phi^k_{i,t}$ of \eqref{phi1}-\eqref{phi2}. This is necessary in order to deal with the errors of \eqref{distribution}. The definition of the laminates and the necessary quantitative estimates is the content of Section \ref{sec:proplam}. In order to find these open sets, we need to prove the openness of the mappings $A_i$ and $\Phi^k_{i,t}$. This is the most technical part of the paper, and occupies Section \ref{open}.
\\
\\
Once openness is shown, we will finally be able to prove the existence of a solution to \eqref{diffincp} in Section \ref{sec:ind}. To deduce that the solution $w$ we construct has the required degeneracy property
\begin{equation}\label{dege2}
\int_{B}|Dw|^\frac{p}{p-1}dx = + \infty
\end{equation}
for all $B \subset \Omega$ open, we will show that our construction yields $w$ not $C^1$ on any open set, and we will deduce \eqref{dege2} from regularity results concerning the $p$-Laplace equation. This is based on Lemma \ref{sigma}, which, together with the aforementioned Lemma \ref{conv} constitutes Section \ref{further}.

\section{Laminates of finite order}\label{LOF}

In this section we introduce elementary splittings and laminates of finite order. We say that $A,B \in\R^{2\times 2}$ are \emph{rank-one connected} if
\[
\rank(A-B) = 1.
\]
We have the following, see for instance \cite[Lemma 2.1]{AFSZ}:
\begin{lemma}\label{sl}
Let $A,B,C \in \R^{2\times 2}$, with $\rank(B - C) = 1$, and $A = \lambda B + (1 - \lambda)C$, for some $\lambda \in [0,1]$. Let also $\Omega \subset \R^2$ be a fixed open domain and $b \in \R^2$. Then, for every $\varepsilon > 0$, one can find a Lipschitz piecewise affine map $f_\eps: \Omega\to \R^2$ such that
\begin{enumerate}
\item $f_\eps(x) = f_0(x) = Ax + b$ on $\partial\Omega$ and $\|f_\eps - f_0\|_{\infty} \le \varepsilon$;
\item $D f_\eps(x) \in B_{\varepsilon}(B)\cup B_\varepsilon(C)$;
\item $|\{x \in \Omega: D f_\eps(x) \in B_{\varepsilon}(B)\}|= \lambda|\Omega|$ and $|\{x \in \Omega: D f_\eps(x) \in B_{\varepsilon}(C)\}|= (1 - \lambda)|\Omega|$.
\end{enumerate}
\end{lemma}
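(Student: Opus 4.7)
The plan is classical: reduce to a normal form, build a one-dimensional sawtooth oscillation of $f_0$ whose gradient takes the values $B$ and $C$ with volume fractions $\lambda$ and $1-\lambda$, and then glue copies of this block over a Vitali tiling of $\Omega$ so as to recover the affine boundary data $Ax+b$.

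\emph{Reduction and model block.} Since $B-C$ has rank one, write $B-C = a\otimes n$ with $|n|=1$, and after an orthogonal change of coordinates assume $n = e_2$. Then $B = A+(1-\lambda)\,a\otimes e_2$ and $C = A-\lambda\,a\otimes e_2$. Let $\varphi:\R\to\R$ be the continuous $1$-periodic piecewise linear function with $\varphi(0)=0$, slope $1-\lambda$ on $(0,\lambda)$ and slope $-\lambda$ on $(\lambda,1)$, so that $\|\varphi\|_\infty\le 1/2$. For every $k\in\N$ the map
\[
g_k(x) \doteq Ax+b+\tfrac{1}{k}\,a\,\varphi(kx_2)
\]
is Lipschitz piecewise affine with $Dg_k \in \{B,C\}$ a.e., the set $\{Dg_k=B\}$ has volume fraction exactly $\lambda$ on every horizontal strip whose height is an integer multiple of $1/k$, and $\|g_k-f_0\|_\infty \le |a|/(2k)$. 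Hence $g_k$ already fulfils (2)--(3) \emph{in the bulk} of $\Omega$ for $k$ large.

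\emph{Matching the boundary.} The difficulty is that $g_k\neq f_0$ on vertical segments. Apply the Vitali covering theorem to find a countable pairwise disjoint family of open rectangles $R_i=(\alpha_i,\beta_i)\times(\gamma_i,\delta_i)\subset\Omega$, with $\delta_i-\gamma_i$ an integer multiple of $1/k$ and $|\Omega\setminus\bigcup_i R_i|=0$. Inside each $R_i$ place the sawtooth $g_k$ on the inner core $(\alpha_i+\tau,\beta_i-\tau)\times(\gamma_i,\delta_i)$ and, on the two lateral strips of width $\tau \ll 1/k$ adjacent to $\{x_1=\alpha_i\}$ and $\{x_1=\beta_i\}$, interpolate between $g_k$ and $f_0$ by means of an explicit zig-zag triangulation aligned with the teeth of $\varphi$. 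Selecting $k$ large and $\tau$ small, this triangulation can be arranged so that on every triangle of the lateral strip the gradient lies in $B_\eps(B)\cup B_\eps(C)$. Outside the rectangles we set $f_\eps = f_0$, which contributes only a Lebesgue null set.

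\emph{Main obstacle.} The delicate point is the construction inside the lateral strips: one must simultaneously ensure continuity with $g_k$ on the inner side, continuity with $f_0$ on the outer side, gradient within $\eps$ of $B$ or $C$ on \emph{every} triangle, and conservation of the $\lambda$, $1-\lambda$ volume ratio. This is achieved by a finite zig-zag triangulation adapted to the sawtooth profile, whose precise combinatorics are worked out in \cite[Lemma 2.1]{AFSZ}. A final adjustment of the volume fractions on a set of arbitrarily small measure guarantees the equality in~(3), while the $L^\infty$ bound $\|f_\eps-f_0\|_\infty\le\eps$ follows from the estimate $\|g_k-f_0\|_\infty\le|a|/(2k)$ upon taking $k$ sufficiently large.
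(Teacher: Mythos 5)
Your overall strategy (reduce to $B-C=a\otimes n$, sawtooth in the $n$-direction, Vitali cover by rectangles, correct the boundary values cell by cell) is the standard one, and the bulk construction $g_k$ is fine. The paper itself does not prove this lemma but cites \cite[Lemma 2.1]{AFSZ}, so the only question is whether your argument is self-contained and correct. It is not: the entire difficulty of the lemma sits in the step you label ``the main obstacle,'' and there you assert the existence of a finite ``zig-zag triangulation'' of the lateral strips with every gradient in $B_\eps(B)\cup B_\eps(C)$, deferring the details to \cite[Lemma 2.1]{AFSZ} --- which is circular, since that is precisely the statement being proved. Moreover, the assertion is not merely unproved but, as described, false for a single interpolation layer. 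Writing the corrector as $h-f_0=\tfrac1k a\,\psi$ with $\psi$ scalar and piecewise affine, conclusion (2) forces $\tfrac1k D\psi$ to lie within $\eps/|a|$ of $(1-\lambda)e_2$ or of $-\lambda e_2$ on every triangle; but any one-pass interpolation between the oscillating trace $\varphi(kx_2)$ on the inner edge and $0$ on the outer edge (e.g.\ $\psi=\min\{\varphi(kx_2),k\eps' x_1\}$, or any truncation by a cutoff) produces triangles on which $D(h-f_0)$ is a small multiple of $a\otimes e_1$, i.e.\ $Dh$ is $\eps$-close to $A$, which is an \emph{interior} point of the segment $[B,C]$ and not within $\eps$ of $\{B,C\}$. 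The missing idea is the iteration that makes these lemmas work: one accepts a boundary layer of small measure where $Dh$ lies near $[B,C]$, observes that each such gradient is again a rank-one convex combination of $B$ and $C$, re-splits those cells, and exhausts the bad set in countably many steps (this is why the resulting map is only countably piecewise affine); the exact equality in (3) must then be tracked through the iteration rather than obtained by an unspecified ``final adjustment'' --- note that the barycenter identity alone only gives $|\{Df_\eps\in B_\eps(B)\}|=(\lambda+O(\eps))|\Omega|$.

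Two secondary, fixable errors. First, the width of the interpolation strip must satisfy $\tau\gtrsim \lambda(1-\lambda)|a|/(k\eps)$, since $\int_{\alpha_i}^{\alpha_i+\tau}\partial_1\psi\,dt=\varphi(k(x_2-\gamma_i))$ has amplitude $\lambda(1-\lambda)$ while $|\partial_1\psi|\le k\eps/|a|$; so the correct regime is $\tau\gg 1/k$, not $\tau\ll 1/k$. Second, a single global frequency $k$ is incompatible with a Vitali cover of a general open set, which necessarily contains rectangles of diameter smaller than $1/k$ whose heights can then not be integer multiples of $1/k$; the construction must be performed by rescaling a fixed unit-cell map into each cell (equivalently, choosing $k=k_i$ per rectangle), with the phase of $\varphi$ shifted per cell so that $g_{k_i}=f_0$ on the horizontal edges as well.
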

Denote with $\mathcal{P}(U)$ the space of probability measures with support in $U \subset \R^{2\times 2}$.
\begin{definition}\label{els}
Let $\nu,\mu \in \mathcal{P}(U)$, $U \subset  \R^{2\times 2}$ open. Let $\nu = \sum_{i= 1}^N\lambda_i\delta_{A_i}$. We say that $\mu$ can be obtained via \emph{elementary splitting from }$\nu$ if for some $i \in \{1,\dots,N\}$, there exist $B,C \in U$, $\lambda \in [0,1]$ such that
\[
\rank(B-C) = 1,\quad [B,C] \subset U, \quad A_i = sB + (1-s)C,
\]
for some $s \in (0,1)$ and
\[
\mu = \nu +\lambda\lambda_i(-\delta_{A_i} + s\delta_B + (1-s)\delta_C).
\]
A measure $\nu = \sum_{i= 1}^r\lambda_i\delta_{A_i}\in \mathcal{P}(U)$ is called a \emph{laminate of finite order} if there exists a finite number of measures $\nu_1,\dots,\nu_r \in \mathcal{P}(U)$ such that
\[
\nu_1 = \delta_X,\quad \nu_r = \nu
\]
and $\nu_{j + 1}$ can be obtained via elementary splitting from $\nu_j$, for every $j\in \{1,\dots,N-1\}$.
\end{definition}

Iterating Lemma \ref{sl} and using the definition of elementary splitting, see for instance \cite[Lemma 3.2]{SMVS}, one can prove the following:
\begin{lemma}\label{ind}
Let $\Omega \subset \R^2$ be an open domain. Let $U \subset \R^{2\times 2}$ be an open set and let $\nu = \sum_{i = 1}^r\lambda_i\delta_{A_i} \in \mathcal{P}(U)$ be a laminate of finite order with barycenter $A \in \R^{2\times 2}$, i.e.: 
\[
A = \int_{\R^{2\times 2}} Xd\nu(X).
\]
Then, for any $b \in \R^2$ and $\varepsilon>0$, the map $f_0(x)\doteq Ax + b$ admits on $\Omega$ an approximation of piecewise affine, equi-Lipschitz maps $f_\eps \in W^{1,\infty}(\Omega,\R^2)$ with the following properties:
\begin{enumerate}
\item $f_\eps(x) = Ax +b$ on $\partial \Omega$ and $\|f_\eps - Ax - b\|_\infty \le \eps$;
\item $D f_\eps(x) \in \bigcup_{i} B_{\varepsilon}(A_i)$;
\item $|\{x \in \Omega: Df_\eps(x) \in B_\varepsilon(A_i)\}| =\lambda_i|\Omega|,\forall i$.
\end{enumerate}
\end{lemma}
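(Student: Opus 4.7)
The plan is to proceed by induction on the length $N$ of the splitting chain $\nu_1 = \delta_X, \nu_2, \ldots, \nu_N = \nu$ witnessing that $\nu$ is a laminate of finite order. The base case $N = 1$ is immediate: necessarily $X = A$, so $f_\eps \equiv f_0 = Ax + b$ works.

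For the inductive step, fix the target $\eps > 0$ and apply the inductive hypothesis to $\nu_{N-1} = \sum_{j}\tilde\lambda_j \delta_{\tilde A_j}$ on $\Omega$, with a smaller tolerance $\eps_0 > 0$ to be chosen. This produces a piecewise affine Lipschitz map $g$ satisfying (1)--(3) relative to $\nu_{N-1}$ and $\eps_0$. Write the terminal elementary splitting as
\[
\nu_N = \nu_{N-1} + \lambda\, \tilde\lambda_k\bigl(-\delta_{\tilde A_k} + s\, \delta_B + (1-s)\,\delta_C\bigr),
\]
with $\tilde A_k = sB + (1-s)C$, $\rank(B-C) = 1$, and $[B,C] \subset U$. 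Let $\{\Omega_\alpha\}$ be the finitely many affine pieces of $g$ on which $Dg \equiv \tilde A_k + \eta_\alpha$ for some $|\eta_\alpha| < \eps_0$; their total measure equals $\tilde\lambda_k |\Omega|$ by the inductive hypothesis. In each $\Omega_\alpha$ select an open subset $\Omega_\alpha' \subset \Omega_\alpha$ with $|\Omega_\alpha'| = \lambda |\Omega_\alpha|$ and apply Lemma \ref{sl} on $\Omega_\alpha'$ to the affine map $g|_{\Omega_\alpha'}$ with matrices $B + \eta_\alpha$ and $C + \eta_\alpha$ (their difference still has rank one and they convex-combine to $\tilde A_k + \eta_\alpha$) and tolerance $\eps_1 > 0$. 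Define $f_\eps$ to agree with the resulting local maps on $\bigcup_\alpha \Omega_\alpha'$ and with $g$ elsewhere.

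Verifying the three conclusions is bookkeeping. Boundary values and the bound $\|f_\eps - f_0\|_\infty \le \eps_0 + \eps_1$ follow from the analogous bound for $g$ together with the fact that each Lemma \ref{sl} modification is supported in $\Omega_\alpha'$ with uniform error at most $\eps_1$. The gradient of $f_\eps$ lies in
\[
\bigcup_{j \neq k} B_{\eps_0}(\tilde A_j)\;\cup\;B_{\eps_0}(\tilde A_k)\;\cup\;B_{\eps_0+\eps_1}(B)\;\cup\;B_{\eps_0+\eps_1}(C),
\]
which, for $\eps_0, \eps_1$ sufficiently small, is contained in $\bigcup_i B_\eps(A_i)$. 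Measuring then gives $(1-\lambda)\tilde\lambda_k|\Omega|$ near $\tilde A_k$, $\lambda s \tilde\lambda_k|\Omega|$ near $B$, and $\lambda(1-s)\tilde\lambda_k|\Omega|$ near $C$, matching the weights of $\nu_N$ exactly; the equi-Lipschitz bound is uniform because the gradient is confined to a fixed bounded neighborhood of $\{A_1, \ldots, A_r\}$.

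The main (mild) obstacle is the tolerance bookkeeping across the induction: one must choose $\eps_0$ small enough that the balls $B_{\eps_0}(\tilde A_j)$ are pairwise disjoint and contained in $U$, and $\eps_1$ small enough that the enlarged balls $B_{\eps_0+\eps_1}(B)$ and $B_{\eps_0+\eps_1}(C)$ sit inside the target neighborhoods $B_\eps(B)$, $B_\eps(C)$ and remain separated from those of the other atoms of $\nu_N$. Beyond this, no geometric input is needed besides Lemma \ref{sl} and the definition of elementary splitting.
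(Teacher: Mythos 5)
Your proof is correct and takes the same route the paper intends: the paper supplies no proof of Lemma~\ref{ind}, remarking only that it follows by ``iterating Lemma~\ref{sl} and using the definition of elementary splitting'' and pointing to \cite[Lemma 3.2]{SMVS}. Your induction on the length of the splitting chain, applying Lemma~\ref{sl} with matrices translated by the small gradient errors $\eta_\alpha$ on a subdomain of measure $\lambda|\Omega_\alpha|$, together with the tolerance bookkeeping that keeps the approximating balls pairwise disjoint and inside $\bigcup_i B_\eps(A_i)$, is precisely that standard argument.
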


\section{Definition of the main quantities}\label{sec:def}

\subsection{Choice of the sequences and coordinates}\label{choice}

For any $a>0$, consider the following sequence, which is increasing in $i$:
\begin{equation}\label{xy}
x_i = ai^2 + x_0,\quad y_i = i^{2(p-1)} + y_0,\quad \forall i \ge 0.
\end{equation}
 Here $a, x_0$ and $y_0$ will be used as coordinates in our construction. Together with these, we will need a fourth parameter $w \in \R$, as will be clear in the next subsection. The space of parameters is, for any $p > 1$:
\[
Q(c) \doteq (c,2c)\times \left(\frac{3}{4},\frac{5}{4}\right)\times \left(\frac{3}{4},\frac{5}{4}\right)\times \left(\frac{3}{4},\frac{5}{4}\right).
\]
The parameter $c > 0$ will be chosen small if $p > 2$ and large if $p < 2$. Instead of fixing $c$ here, we will update it in various technical results of the next sections. We introduce a function that will be crucial in the following:
\begin{equation}\label{defG}
G_p(a) = \frac{a}{a +  1} + (p-1)\frac{1}{a^{p-1} + 1}.
\end{equation}
We start introducing conditions on $c$: in particular we claim that the inequality
\begin{equation}
\label{eqn:choice-c}
\max\{1,p-1\} < G_p(a) < \max\{1,p-1\} + \frac{1}{2} \qquad \mbox{for every } a\in [c,2c],
\end{equation}
holds for $c$ sufficiently small if $p>2$ and for $c$ sufficiently large if $1<p<2$.

\subsubsection{The case $p > 2$}

The (right) derivative of $c \mapsto G_p(c)$ at $0$ is $1$. Hence, $G_p(c)$ is strictly increasing in a neighborhood of $0$, and, since $G_p(0) = p-1$, for $c$ sufficiently small we have that
\begin{equation}\label{incre}
p-1 < G_p(c) \le G_p(a) \le G_p(2c) < p - \frac{1}{2},  \quad \forall a \in [c,2c].
\end{equation}

\subsubsection{The case $1 <p < 2$}
We have that $\lim_{c \to \infty} G_p(c) = 1$ and that for all $c$ sufficiently large $G_p(c)$ is decreasing. Indeed we have
\[
\frac{d}{dc}G_p(c) =\frac{1}{ (c+1)^2} -(p-1)^2\frac{c^{p-2}}{(c^{p-1} + 1)^2} = \frac{c^{p-2}}{(c^{p-1} + 1)^2}\left(\frac{(c^{p-1} + 1)^2}{c^{p-2}(c + 1)^2} - (p-1)^2\right).
\]
As $1 < p < 2$, $\lim_{c \to \infty}\left(\frac{(c^{p-1} + 1)^2}{c^{p-2}(c + 1)^2} - (p-1)^2\right) = -(p-1)^2 < 0$. Thus, we can take $c$ sufficiently large that
\begin{equation}\label{increless}
1 < G_p(2c) \le G_p(a) \le G_p(c) < \frac{3}{2}, \quad \forall a \in [c,2c].
\end{equation}

\begin{remark}
The function $G_p(a)$ behaves very differently for $p = 2$ and $p \neq 2$. 
 The strict inequalities in \eqref{eqn:choice-c} are the key to be able to conclude the convergence of some quantities in Section \ref{sec:ind}. If $p = 2$, instead, then $G_p(a) \equiv 1$ for all $a$ and our strategy fails, as expected in view of Weyl's Lemma recalled in the introduction.
\end{remark}

\subsection{Parametrization of the laminates}\label{s:par}

Let $p\in(1,\infty)$ be fixed and recall that
\[
K_p = \left\{\left(\begin{array}{cc}x & y \\ |(x,y)|^{p-2}y & -|(x,y)|^{p-2}x \end{array}\right): x,y \in \R\right\}.
\]
We denote points of $Q(c)$ as $P$, but we will almost always drop the dependence of the functions $x_i,y_i,v_i,z_i$, etc., from $P$. For readability, we also introduce the functions
\[
h_w(x) = |(x,w)|^{p-2}w, \quad g_w(x) = |(x,w)|^{p-2}x.
\]
The counterexample is built in an inductive way. As explained in Section \ref{STRAT}, given 
\[
A_i(P) = \left(\begin{array}{cc} x_{i-1} & w\\z_{i-1} & y_{i-1} \end{array}\right),
\]
we need to split it into points $B_i, D_i, C_i = A_{i + 1}$ where $B_i,D_i \in K_p$ and $A_{i + 1}$ is the next step of the iteration. Here, $\{x_i\}$ and $\{y_i\}$ are the sequences of functions introduced in Section \ref{choice}, while $z_i$ cannot be chosen, and will instead be built during the construction of the laminates. We start by considering
\begin{equation}\label{A}
A_i(P) \doteq \left(\begin{array}{cc} x_{i-1} & w\\z_{i-1} & y_{i-1} \end{array}\right).
\end{equation}
Split it into the rank-one direction 
\[
\left(\begin{array}{cc} 0 & 0\\ m & n\end{array}\right),
\]
 for some suitable $m,n \in \R$, and call the endpoints $B_i$ and $E_i$. We impose that $B_i \in K_p$ and $E_i$ has $(2,2)$ component equal to $y_i$. Simple computations show that then $E_i$ and $B_i$ are:
\begin{equation}\label{BE}
B_i(P) \doteq \left(\begin{array}{cc} x_{i-1} & w\\h_{w}(x_{i-1}) & -g_{w}(x_{i-1}) \end{array}\right) \in K_p,\quad  E_i(P) \doteq \left(\begin{array}{cc} x_{i-1} & w\\v_i & y_{i} \end{array}\right),
\end{equation}
where
\begin{equation}\label{vwi}
v_i = \frac{y_i + g_w(x_{i-1})}{y_{i-1} + g_w(x_{i-1})}z_{i-1} - \frac{y_i - y_{i-1}}{y_{i-1} + g_w(x_{i-1})}h_w(x_{i-1}).
\end{equation}
In particular, we can write
\[
A_i =  \lambda_{B_i}B_i+ \lambda_{E_i}E_i,
\]
with
\begin{equation}\label{landa1}
\lambda_{B_i} = \frac{y_i - y_{i-1}}{y_i + g_w(x_{i-1})},\quad \lambda_{E_i} = \frac{y_{i-1} + g_{w}(x_{i-1})}{y_i + g_{w}(x_{i-1})}.
\end{equation}
We further split $E_i$ in the rank-one direction
\[
\left(\begin{array}{cc} m' & 0\\ n' & 0\end{array}\right),
\]
for suitable $m',n' \in \R$, in order to reach the endpoints
\begin{equation}\label{CD}
C_i(P) = A_{i + 1}(P) = \left(\begin{array}{cc} x_{i} & w\\z_{i} & y_{i} \end{array}\right) \text{ and } D_i(P) \doteq \left(\begin{array}{cc}-g^{-1}_w(y_i) & w\\h_w(g^{-1}_w(y_i)) & y_{i} \end{array}\right) \in K_p.
\end{equation}
We have $E_i = \lambda_{C_i}C_i + \lambda_{D_i}D_i$, for
\begin{equation}\label{landa2}
\lambda_{C_i} = \frac{x_{i-1}+g^{-1}_w(y_i)}{x_{i}+g^{-1}_w(y_i)},\quad \lambda_{D_i} = \frac{x_{i} - x_{i-1}}{x_{i}+g^{-1}_w(y_i)}.
\end{equation}
As above, $z_i$ is defined by the requirements $C_i(P) = A_{i + 1}(P)$ and $D_i(P)  \in K_p$:
\begin{equation}\label{fwiimp}
z_i(P) =   \frac{x_i +g^{-1}_w(y_i)}{x_{i-1}+g^{-1}_w(y_i)}v_{i} - \frac{x_i - x_{i-1}}{x_{i-1} +g^{-1}_w(y_i)}h_w(g^{-1}_w(y_i)).
\end{equation}
Notice, though, that we have the freedom to choose $z_0$, that we will fix as a function $z_0 = z_0(P)$ in Subsection \ref{sub:zi}. 
\begin{remark}\label{lof:BDC}
By construction, the probability measure
\[
\nu = \lambda_{B_i(P)}\delta_{B_i(P)} + \lambda_{E_i(P)}\lambda_{D_i(P)}\delta_{D_i(P)}+\lambda_{E_i(P)}\lambda_{C_i(P)}\delta_{C_i(P)}
\]
is a laminate of finite order for all $P \in Q(c)$, see Definition \ref{els}. This fact will be exploited in Section \ref{sec:proplam}.
\end{remark}
To conclude, we introduce maps that will be useful in our constructions. Define for $t \in [0,1]$ and for $P \in Q(c)$,
\begin{equation}\label{phi1imp}
\Phi^1_{i,t}(a,x_0,y_0,w) \doteq A_i(P) + t\lambda_{E_i}(P)(B_i(P) - E_i(P))\in \R^{2\times 2},
\end{equation}
and
\begin{equation}\label{phi2new}
\Phi^2_{i,t}(a,x_0,y_0,w) \doteq E_i(P) + t\lambda_{C_i}(P)(D_i(P) - C_i(P)) \in \R^{2\times 2}.
\end{equation}
Notice that for $t\in (0,1)$ the maps $\Phi^1_{i,t}$ and $\Phi^2_{i,t}$ interpolate between known maps
\begin{equation}\label{endpoints}
\Phi^1_{i,0}(P) = A_i(P),\quad  \Phi^2_{i,0}(P) = E_i(P),\quad \Phi^1_{i,1}(P) = B_i(P),\quad \Phi^2_{i,1} = D_i(P). 
\end{equation}
More explicitely, we rewrite \eqref{phi1imp}-\eqref{phi2new} as:
\begin{equation}\label{phi1}
\Phi^1_{i,t}(a,x_0,y_0,w) \doteq \left(\begin{array}{cc}x_{i-1} &w \\  z_{i-1} + t(h_w(x_{i-1})-z_{i-1}) & y_{i-1} - t(y_{i-1} + g_{w}(x_{i-1}))\end{array}\right),
\end{equation}
\begin{equation}\label{phi2}
\Phi^2_{i,t}(a,x_0,y_0,w) \doteq  \left(\begin{array}{cc} x_{i-1} - t(g^{-1}_w(y_i) + x_{i-1}) & w\\  v_i + t(h_w(g^{-1}_w(y_i))-v_i) & y_i\end{array}\right).
\end{equation}

\section{Openness of the mappings}\label{open}

The goal of this section is to show that the maps $A_i$ and $\Phi_{i,t}^k$ are well-defined, continuous and open. These properties will be shown in Proposition \ref{ansmag2}.

\subsection{Explicit formula for $z_i$}\label{sub:zi}

For all $i \ge 1$, define
\begin{equation}\label{Sell}
S_i(P) \doteq \prod_{\ell = 1}^i\frac{x_\ell +g^{-1}_w(y_\ell)}{x_{\ell-1}+g^{-1}_w(y_\ell)}\frac{y_\ell + g_w(x_{\ell-1})}{y_{\ell-1} + g_w(x_{\ell-1})}
\end{equation}
and for all $\ell \ge 1$,
\begin{equation}\label{Hell}
H_\ell(P) \doteq H^1_\ell(P) + H^2_\ell(P) \doteq \frac{x_\ell +g^{-1}_w(y_\ell)}{x_{\ell-1}+g^{-1}_w(y_\ell)}\frac{y_\ell - y_{\ell-1}}{y_{\ell-1} + g_w(x_{\ell-1})}h_w(x_{\ell-1}) +  \frac{x_\ell - x_{\ell-1}}{x_{\ell-1} +g^{-1}_w(y_\ell)}h_w(g^{-1}_w(y_\ell)).
\end{equation}
We can use fomulas \eqref{vwi}-\eqref{fwiimp}, to find that 
\[
z_i(P) =  \frac{x_i +g^{-1}_w(y_i)}{x_{i-1}+g^{-1}_w(y_i)}\left(\frac{y_i + g_w(x_{i-1})}{y_{i-1} + g_w(x_{i-1})}z_{i-1} - \frac{y_i - y_{i-1}}{y_{i-1} + g_w(x_{i-1})}h_w(x_{i-1})\right) - \frac{x_i - x_{i-1}}{x_{i-1} +g^{-1}_w(y_i)}h_w(g^{-1}_w(y_i)).
\]
Thus, using recursively this formula, we find an expression for $z_i(P)$
\begin{align*}
z_i(P) = S_i(P)z_0 - &\sum_{\ell = 1}^iH_\ell(P)\prod_{k = \ell + 1}^i\frac{x_k +g^{-1}_w(y_k)}{x_{k-1}+g^{-1}_w(y_k)}\frac{y_k + g_w(x_{k-1})}{y_{k-1} + g_w(x_{k-1})} \\
&= S_i(P)z_0 - S_i(P)\sum_{\ell = 1}^{i} \frac{H_\ell(P)}{S_\ell(P)} = S_i(P)\left(z_0 - \sum_{\ell = 1}^{i} \frac{H_\ell(P)}{S_\ell(P)}\right).
\end{align*}
Notice that we work with the convention:
\begin{equation}\label{conven}
\prod_{k = r}^s p_k = 1
\end{equation}
if $r > s$ and $\{p_n\}_n$ is any sequence. We choose $$z_0 = z_0(P) = \sum_{\ell = 1}^{\infty} \frac{H_\ell(P)}{S_\ell(P)}.$$ This yields
\begin{equation}\label{ziexp}
z_i(P) = S_i(P)\sum_{\ell = i + 1}^{\infty} \frac{H_\ell(P)}{S_\ell(P)}.
\end{equation}
This choice requires to show the convergence of $\sum_{\ell = 1}^{\infty} \frac{H_\ell(P)}{S_\ell(P)}$, which will be proved in Proposition \ref{ansmag2} below, after having made some necessary estimates.

\subsection{Sufficient conditions for the openness of the mappings}

We are now interested in providing a sufficient condition to ensure that the mappings $A_{i+1},\Phi^k_{i,t}$ are open, for all $k =1,2, t \in [0,1)$ and $i \ge I$ for $I$ large to be fixed later. To do so, we wish to use the Invariance of Domain theorem, see \cite[Theorem 2B.3]{HAT}, that tells us that we only need to check that the above maps are continuous injections. Continuity is immediate for most of the components of the above maps, and it is only non-trivial for $z_i$ and $v_i$. It will be shown for $z_i$ in Proposition \ref{ansmag2} and from this and \eqref{vwi} it readily follows for $v_i$ too. We only need to verify injectivity. The estimates we need to show in order to prove well-posedness and continuity of $z_i$ and the injectivity of the aforementioned maps will be done in this section.

\begin{lemma}\label{simpl}
Let $Q(c) =  (c,2c)\times \left(\frac{3}{4},\frac{5}{4}\right)\times \left(\frac{3}{4},\frac{5}{4}\right)\times \left(\frac{3}{4},\frac{5}{4}\right)$, with $c$ such that Proposition \ref{ansmag2}\eqref{a1} holds. If $1 < p \neq 2$ and if for some $I \ge 2$ and for all $i \ge I$
\[
(\partial_az_i - i^2 \partial_{x_0}z_i)(P) \doteq \lim_{s\to 0}\frac{z_i(P + s(1,-i^2,0,0))-z_i(P)}{s}
\]
exists, is continuous on $Q(c)$ and
\begin{equation}\label{deriva}
\partial_az_i(P) - i^2\partial_{x_0} z_i(P) > 0, \quad \forall P \in Q(c),
\end{equation}
then, for all $i \ge I, k =1,2$ and $t \in [0,1)$, $A_{i+1},\Phi^k_{i + 1,t}$ are injective mappings and hence $A_{i+1}(Q(c))$ and $\Phi^k_{i + 1,t}(Q(c))$ are open sets.
\end{lemma}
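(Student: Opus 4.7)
The plan is to invoke the Invariance of Domain theorem mentioned just before the statement: since $Q(c)$ is an open subset of $\R^{4}$ and the target $\R^{2\times 2}$ is also $4$-dimensional, it will suffice to check that each of $A_{i+1}$ and $\Phi^{k}_{i+1,t}$ is a continuous injection on $Q(c)$. Continuity will be automatic for every matrix entry other than the $(2,1)$-component, since the remaining pieces are elementary functions of $x_i,y_i,w,g_w,h_w,g_w^{-1}$. For the $(2,1)$-component the continuity of $z_i$ is already promised by Proposition~\ref{ansmag2}, and continuity of $v_{i+1}$ then follows from formula \eqref{vwi}. So the real work will be injectivity.

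To prove injectivity, I would take two points $P=(a,x_0,y_0,w)$ and $P'=(a',x_0',y_0',w')$ in $Q(c)$ mapped to the same matrix under one of the three maps and read off the entries one by one. The $(1,2)$-entry is $w$ in every case, hence $w=w'$ immediately. For $A_{i+1}$ and $\Phi^{1}_{i+1,t}$ the $(1,1)$-entry is $x_i$ and the $(2,2)$-entry is $(1-t)y_i - t\,g_w(x_i)$ (with $t=0$ for $A_{i+1}$); after using $w=w'$ the offending $g_w(x_i)$ cancels, and for $t\in[0,1)$ one deduces $x_i(P)=x_i(P')$ and $y_i(P)=y_i(P')$, i.e.\ $y_0=y_0'$ and $(a-a',x_0-x_0')$ parallel to $(1,-i^2)$. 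For $\Phi^{2}_{i+1,t}$ the $(2,2)$-entry $y_{i+1}$ yields $y_0=y_0'$ immediately, after which the $(1,1)$-entry $(1-t)x_i - t\,g_w^{-1}(y_{i+1})$ produces the same direction constraint on $(a-a',x_0-x_0')$.

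It will then remain to extract information from the $(2,1)$-entry along the one-dimensional residual direction $(1,-i^2,0,0)$. This entry equals $z_i$ for $A_{i+1}$, equals $(1-t)z_i+t\,h_w(x_i)$ for $\Phi^{1}_{i+1,t}$, and equals $(1-t)v_{i+1}+t\,h_w(g_w^{-1}(y_{i+1}))$ for $\Phi^{2}_{i+1,t}$; by \eqref{vwi} the last quantity is an affine function of $z_i$ with strictly positive slope $(y_{i+1}+g_w(x_i))/(y_i+g_w(x_i))$. Since $w$, $y_0$ and $x_i=ai^2+x_0$ are now all fixed, every coefficient and every additive constant in these three expressions is constant along the direction $(1,-i^2,0,0)$, so for $t\in [0,1)$ the $(2,1)$-equation collapses in each case to $z_i(P)=z_i(P')$. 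Because $Q(c)$ is a product of intervals, the whole segment from $P$ to $P'$ stays in $Q(c)$, and hypothesis \eqref{deriva} combined with the mean value theorem will then force $P=P'$.

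No part of the argument will be delicate inside this lemma: the entry-by-entry unscrambling is routine bookkeeping and the final monotonicity step is a one-line application of \eqref{deriva}. The genuine obstacle lies outside the lemma, in verifying the hypothesis \eqref{deriva} itself in Proposition~\ref{ansmag2}, which in turn depends on the careful choice of the sequences $(x_i,y_i)$ and of the constant $c$ made in Section~\ref{choice}.
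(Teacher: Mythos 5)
Your argument matches the paper's proof: both invoke Invariance of Domain (so that continuous injectivity suffices), both read off $w$, $y_0$, and the combination $ai^2+x_0$ from the $(1,2)$, $(2,2)$ and $(1,1)$ entries (using $t<1$ to divide out $1-t$), both reduce the $(2,1)$ entry to $z_i(P)=z_i(P')$ via the affine relation \eqref{vwi} for $\Phi^2$, and both finish by applying \eqref{deriva} along the segment joining $P$ and $P'$ in the convex set $Q(c)$.
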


\begin{proof}
By the above discussion, we only need to show the injectivity of the three mappings at hand.
\\
\\
\fbox{$A_{i+1}$:} Let $P_j = (a^j,x_0^j,y_0^j,w^j)$ for $j =1,2$, and suppose that for some index $i \ge I$, 
\begin{equation}\label{PP1}
A_{i+1}(P_1) = A_{i+1}(P_2).
\end{equation} 
We need to show that $P_1 = P_2$. From \eqref{A}, we immediately see that $y_0^1 = y_0^2$ and $w^1 = w^2$. Furthermore, from the equality
\[
x_{i}(P_1) = x_{i}(P_2),
\]
we infer
\begin{equation}\label{interm1}
a^1i^2 + x_0^1 = a^2i^2 + x_0^2 \Leftrightarrow x_0^1 -x_0^2 = -i^2(a^1-a^2).
\end{equation}
Now consider the segment 
\[
\sigma(s) \doteq (a^1,x_0^1,y_0^1,w) + s(a^2 - a^1,x_0^2-x_0^1,0,0) = (a^1,x_0^1,y_0^1,w) + (a^2-a^1)s(1,-i^2,0,0).
\]
As $Q(c)$ is convex, $(\sigma(s),y_0,w) \in Q(c), \forall s \in [0,1]$. Moreover, by assumption \eqref{deriva}, $t\mapsto z_i(\sigma(t))$ is a $C^1$ function. By \eqref{PP1}, we can write
\[
0 = z_i(P_1) -z_i(P_2) = \int_0^1\frac{d}{dt}z_i(\sigma(t))dt = (a^2-a^1)\int_0^1\partial_az_i(\sigma(t)) - i^2 z_i(\sigma(t))dt.
\]
Assumption \eqref{deriva} shows $a^1 = a^2$. Through \eqref{interm1} we also infer $x_0^1 = x_0^2$ and we conclude the proof.
\\
\\
\fbox{$\Phi^1_{i + 1,t}$:}  Fix $t \in [0,1)$. Assume with the same notation as above that $\Phi^1_{i + 1,t}(P_1) =\Phi^1_{i + 1,t}(P_2)$. Then, by \eqref{phi1} we easily infer $w^1 = w^2$ and
\[
a^1i^2 + x_0^1 = a^2i^2 + x_0^2.
\]
Moreover, since $t \neq 1$, equating the $(2,2)$ component of $\Phi^1_{i,t}(P_1)$ and $\Phi^1_{i,t}(P_2)$, we also find $y_0^1 = y_0^2$. Since $x_i(P_1) = x_i(P_2)$ and $w^1 = w^2$, the $(2,1)$ component is now giving us (again since $t \neq 1$),
\[
z_i(P_1) = z_i(P_2),
\]
and we can conclude as in the previous case using \eqref{deriva}.
\\
\\
\fbox{$\Phi^2_{i + 1,t}$:}  Fix $t \in [0,1)$. Assume with the same notation as above that $\Phi^2_{i + 1,t}(P_1) =\Phi^2_{i + 1,t}(P_2)$. Then, by \eqref{phi2} we easily infer $w^1 = w^2$ and $y_0^1 = y_0^2$. With these observations, and the fact that $t \neq 1$, equality of the $(1,1)$ components yields once again
\[
a^1i^2 + x_0^1 = a^2i^2 + x_0^2,
\]
and equality of the $(2,1)$ components yields 
\begin{equation}\label{vPP}
v_{i + 1}(P_1) = v_{i + 1}(P_2).
\end{equation}
Combining these facts with \eqref{vwi}, we easily see that \eqref{vPP} is equivalent to $z_i(P_1) = z_i(P_2)$ and we once again conclude analogously to the first case.
\end{proof}

\subsection{Computation of the derivatives of $z_i$}\label{compder}
From Lemma \ref{simpl}, we know that we need to compute first order derivatives of $z_i$. In particular let
\[
\delta_i = \partial_a - i^2\partial_{x_0}.
\]
Since
\[
z_i(P) = S_i(P)\sum_{\ell = i + 1}^{\infty} \frac{H^1_\ell(P)+H^2_\ell(P)}{S_\ell(P)},
\]
and it is easy to see that $P\mapsto \frac{S_i(P)(H^1_\ell(P)+H^2_\ell(P))}{S_\ell(P)}$ is a smooth mapping, our aim is to estimate from above and below
\begin{equation}
\begin{split}
\delta_i\left(\frac{S_i(P)H_\ell(P)}{S_\ell(P)}\right) &= \delta_i\left(\frac{S_i(P)}{S_\ell(P)}\right)(H^1_\ell(P)+H^2_\ell(P))+ \frac{S_i(P)}{S_\ell(P)}\delta_i (H^1_\ell(P)+H^2_\ell(P))
,
\end{split}
\end{equation}

Using multiple times that $\delta_i \prod_{i = 1}^N f_i g_i^{-1} = \prod_{i=1}^N f_i g_i^{-1} \sum_{i=1}^N (f_i^{-1} \delta_i f_i - g_i^{-1} \delta_ig_i) $, we get
\begin{equation}\label{deltaHell1}
\begin{split}
\delta&_iH^1_\ell(P) =  \delta_i\left(\frac{x_\ell +g^{-1}_w(y_\ell)}{x_{\ell-1}+g^{-1}_w(y_\ell)}\frac{y_\ell - y_{\ell-1}}{y_{\ell-1} + g_w(x_{\ell-1})}h_w(x_{\ell-1}) 
\right)\\
&=H^1_\ell(P) \left(\frac{\delta_ix_\ell}{x_{\ell}+g^{-1}_w(y_\ell)}- \frac{\delta_ix_{\ell - 1}}{x_{\ell-1}+g^{-1}_w(y_\ell)}
+\frac{h'_w(x_{\ell-1})\delta_ix_{\ell - 1}}{h_w(x_{\ell-1})} -\frac{g_w'(x_{\ell-1})\delta_ix_{\ell-1}}{y_{\ell-1} + g_w(x_{\ell-1})}
\right) 
\\
&=H^1_\ell(P) \left(\frac{\delta_i(x_\ell - x_{\ell - 1})}{x_{\ell}+g^{-1}_w(y_\ell)}- \frac{\delta_ix_{\ell - 1}(x_\ell - x_{\ell - 1})}{(x_{\ell}+g^{-1}_w(y_\ell))(x_{\ell-1}+g^{-1}_w(y_\ell))}
+\frac{h'_w(x_{\ell-1})\delta_ix_{\ell - 1}}{h_w(x_{\ell-1})} -\frac{g_w'(x_{\ell-1})\delta_ix_{\ell-1}}{y_{\ell-1} + g_w(x_{\ell-1})}
\right) 
\\&\doteq H^1_\ell(P) \rho^1_{i,\ell} ,
\end{split}
\end{equation}
\begin{equation}\label{deltaHell2}
\begin{split}
\delta_iH^2_\ell(P) &= \delta_i\left(  \frac{x_\ell - x_{\ell-1}}{x_{\ell-1} +g^{-1}_w(y_\ell)}h_w(g^{-1}_w(y_\ell))\right)
= H^2_\ell(P)\left(\frac{\delta_i(x_\ell - x_{\ell-1})}{x_\ell- x_{\ell-1} }- \frac{\delta_ix_{\ell - 1}}{x_{\ell-1} +g^{-1}_w(y_\ell)}\right) \doteq H^2_\ell(P)\rho^2_{i,\ell}.
\end{split}
\end{equation}
%
Similarly, we get
\begin{equation}
\begin{split}
 \delta_i&\left(\frac{S_i(P)}{S_\ell(P)}\right)
=
\delta_i\left(\prod_{k = i + 1}^\ell\frac{x_{k-1}+g^{-1}_w(y_k)}{x_k +g^{-1}_w(y_k)}\frac{y_{k-1} + g_w(x_{k-1})}{y_k + g_w(x_{k-1})}\right)
\\&= \frac{S_i}{S_\ell}\sum_{r = i + 1}^\ell\left[-\frac{\delta_ix_r}{x_{r}+g^{-1}_w(y_r)} + \frac{\delta_ix_{r-1}}{x_{r-1}+g^{-1}_w(y_r)} - \frac{g'_w(x_{r-1})\delta_ix_{r-1}}{y_{r} + g_w(x_{r-1})} + \frac{g'_w(x_{r-1})\delta_ix_{r-1}}{y_{r-1} + g_w(x_{r-1})}\right]
\\
&=\frac{S_i}{S_\ell} \sum_{r = i + 1}^\ell\left[-\frac{\delta_i(x_r-x_{r-1})}{x_{r}+g^{-1}_w(y_r)} + \frac{(x_{r}-x_{r-1})\delta_ix_{r-1}}{(x_{r-1}+g^{-1}_w(y_r))(x_{r}+g^{-1}_w(y_r))}\right.\left.+ \frac{(y_{r} - y_{r-1})g'_w(x_{r-1})\delta_ix_{r-1}}{(y_{r} + g_w(x_{r-1}))(y_{r-1} + g_w(x_{r-1}))}\right]
\\
&\doteq\frac{S_i}{S_\ell}\sigma_{i,\ell}
.\label{sigmaiell}
\end{split}
\end{equation}

Overall, with the newly defined $\rho^1_{i,\ell}, \rho^2_{i,\ell},\sigma_{i,\ell}$ introduced respectively in \eqref{deltaHell1}, \eqref{deltaHell2}, \eqref{sigmaiell}, we obtain that
\begin{equation}
\label{eqn:deltai_expression}
\delta_iz_i = \sum_{\ell = i + 1}^\infty\frac{S_i}{S_\ell}\left[\sigma_{i,\ell}H_\ell +H^1_\ell \rho^1_{i,\ell} +H^2_\ell \rho^2_{i,\ell}\right].
\end{equation}

\subsection{Asymptotic behavior of the terms composing $\delta_iz_i$}\label{estder}
Let us start by estimating $\sigma_{i,\ell}$, see \eqref{sigmaiell}. Throughout this section, we will make a list of claims concerning the asymptotics of the terms involved. We start by claiming that
\begin{align}
&\tag{Asymp. 1}\label{cla1}-\frac{\delta_i(x_r-x_{r-1})}{x_{r}+g^{-1}_w(y_r)} \sim  -\frac{2}{a + 1}\frac{1}{r},\\
&\tag{Asymp. 2}\label{cla2}\frac{x_{r}- x_{r-1}}{(x_{r-1}+g^{-1}_w(y_r))(x_{r}+g^{-1}_w(y_r))} \sim \frac{1}{r^{3}}\frac{2 a}{(a + 1)^2},\\
&\tag{Asymp. 3}\label{cla3}\frac{(y_{r} - y_{r-1})g'_w(x_{r-1})}{(y_{r} + g_w(x_{r-1}))(y_{r-1} + g_w(x_{r-1}))} \sim  \frac{1}{r^{3}}\frac{ 2(p-1)^2 a^{p-2}}{(1 + a^{p-1})^2}.
\end{align}
Let us explain the terminology we used in the previous lines. Given two sequences of functions $\{a_j\}_{j \ge 1}$ and $\{b_j\}_{j \ge 1}$ defined on $\overline{Q(c)}$, the symbol $a_j \sim b_j$ means the following:
\[
\sup_{P \in \overline{Q(c)}}\left|\frac{a_j(P)}{b_j(P)} - 1\right| \to 0,\qquad \text{ as } j \to \infty.
\]
The proofs of $\eqref{cla1} \div \eqref{cla3}$, as well as the following asymptotic estimates, are easy and similar to each other. Therefore, let us only show \eqref{cla1}. Using the definition
\[
|(g_w^{-1}(y_r),w)|^{p-2}g_w^{-1}(y_r) = y_r = r^{2(p-1)} + y_0,
\]
we see that
\[
1 + \frac{y_0}{r^{2(p-1)}} = \left|\left(\frac{g_w^{-1}(y_r)}{r^{2}},\frac{w}{r^2}\right)\right|^{p-2}\frac{g_w^{-1}(y_r)}{r^{2}},
\]
which shows that 
\begin{equation}\label{invg}
g_w^{-1}(y_r) \sim r^{2}.
\end{equation}
We can now prove \eqref{cla1} with the help of \eqref{invg}:
\begin{align*}
\frac{\delta_i(x_r-x_{r-1})}{x_{r}+g^{-1}_w(y_r)}\frac{r(a + 1)}{2} =\frac{r^2 - (r-1)^2}{a + x_0  r^{-2}+ r^{-2}g_w^{-1}(y_r)}\frac{a + 1}{2r} = \frac{(2 - r^{-1})(a+1)}{2(a + x_0  r^{-2}+ r^{-2}g_w^{-1}(y_r))} \sim 1.
\end{align*}
Estimates \eqref{cla1}-\eqref{cla2}-\eqref{cla3} imply that, if we define
\[
\tilde \sigma_{i,\ell} \doteq \sum_{r = i + 1}^\ell\left[-\frac{2}{a + 1}\frac{1}{r}+\frac{(r-1)^2 - i^2}{r^{3}}\frac{2 a}{(a + 1)^2} + \frac{(r-1)^2 - i^2}{r^{3}}\frac{ 2(p-1)^2 a^{p-2}}{(1 + a^{p-1})^2}\right]
\]
and we fix $\eps = \eps(c) > 0$, then if $I = I(c,\eps) \in \N$ is sufficiently large, and $i \ge I$, for all $P \in \overline{Q(c)}$,
\begin{equation}\label{estpreS}
\begin{split}
\left|\sigma_{i,\ell}(P) - \tilde \sigma_{i,\ell}(P)\right| \le C\eps\sum_{r = i + 1}^\ell\left[\frac{1}{r} + \frac{(r-1)^2 - i^2}{r^{3}}\right] \le C\eps\sum_{r = i + 1}^\ell\frac{1}{r}.
\end{split}
\end{equation}
Here and below, $C = C(c) > 0$ is a constant depending solely on $c> 0$ (and $p$, that is fixed). This constant may change line-by-line, but we will always denote it with the same letter. We can further simplify \eqref{estpreS} and $\tilde\sigma_{i,\ell}$. Indeed, by integral comparison, one can see that, for all $\ell \ge i + 1$ and  $\alpha > 0$:
\begin{equation}\label{Euler}
\left|\sum_{k = i + 1}^\ell\frac{1}{k}- \ln\left(\frac{\ell}{i + 1}\right)\right| \le \frac{1}{i + 1},
\end{equation}
\begin{equation}\label{Euleralpha}
\left|\sum_{k = i + 1}^\ell\frac{1}{k^{\alpha + 1}}- \frac{1}{\alpha}\left[\frac{1}{(i+1)^\alpha} - \frac{1}{\ell^\alpha}\right]\right| \le \frac{1}{(i + 1)^{\alpha+1}} .
\end{equation}
Furthermore, letting 
\begin{equation}\label{sigmabar}
\begin{split}
\bar \sigma_{i,\ell} &\doteq -\frac{2}{a + 1}\ln\left(\frac{\ell}{i + 1}\right)+\left[\ln\left(\frac{\ell}{i + 1}\right) +\frac{i^2}{2}\left( \frac{1}{ \ell^2} - \frac{1}{(i + 1)^2}\right)\right]\left[\frac{2 a}{(a + 1)^2} +\frac{ 2(p-1)^2 a^{p-2}}{(1 + a^{p-1})^2} \right],
\end{split}
\end{equation}
and using \eqref{Euler}-\eqref{Euleralpha}, we see that, if $I$ is sufficiently large, \eqref{estpreS} becomes, for all $P \in \overline{Q(c)}$, $i \ge I$,
\begin{equation}\label{estSbar}
|\sigma_{i,\ell}(P) - \bar\sigma_{i,\ell}(P)| \le \left|\sigma_{i,\ell}(P) - \tilde \sigma_{i,\ell}(P)\right| + |\tilde\sigma_{i,\ell}(P) - \bar\sigma_{i,\ell}(P)| \le C\left[\eps \ln\left(\frac{\ell}{i + 1}\right) +\frac{1}{i}\right] \le C\eps\left[\ln\left(\frac{\ell}{i + 1}\right) +1\right]
\end{equation}
Let us now move to $H_\ell^1$ and $H_\ell^2$:
\begin{align*}
&\tag{Asymp. 4}\label{cla4}H^1_\ell(P) = \frac{x_\ell +g^{-1}_w(y_\ell)}{x_{\ell-1}+g^{-1}_w(y_\ell)}\frac{y_\ell - y_{\ell-1}}{y_{\ell-1} + g_w(x_{\ell-1})}h_w(x_{\ell-1}) \sim 1 \cdot  \frac{2(p-1)}{1 + a^{p-1}}\frac{1}{\ell}\cdot a^{p-2}\ell^{2(p-2)}w
\doteq \bar H^1_\ell(P).
\end{align*}
\begin{align*}
&\tag{Asymp. 5}\label{cla5}
H^2_\ell(P) = 
 \frac{x_\ell - x_{\ell-1}}{x_{\ell-1} +g^{-1}_w(y_\ell)}h_w(g^{-1}_w(y_\ell))\sim \frac{2a}{a + 1}\frac{1}{\ell} \cdot \ell^{2(p-2)}w\doteq \bar H^2_{\ell}(P).
\end{align*}
Hence we have
\begin{equation}\label{estH}
\left|H^1_\ell(P) - \bar H^1_\ell(P)\right|+\left|H^2_\ell(P) - \bar H^2_\ell(P)\right| \le C\eps \ell^{2(p-2) -1},
\end{equation}
for all $\ell \ge i + 1$, provided $i \ge I$. Define also $\bar H_\ell \doteq \bar H^1_\ell + \bar H^2_\ell$. We turn now to $\rho^1_{i,\ell}$ and $\rho^2_{i,\ell}$. Similarly to what we have done for the previous terms, we can compute the asymptotic behavior of each of the addenda in the definition of  $\rho^1_{i,\ell},\rho^2_{i,\ell}$, given in \eqref{deltaHell1}, \eqref{deltaHell2}. Then, we introduce the following functions
\begin{align*}
\bar \rho^1_{i,\ell} &\doteq \frac{2}{(1 + a)\ell} 
 - \frac{2a((\ell - 1)^2 - i^2)}{(a + 1)^2\ell^3} 
+ \frac{(p-2)((\ell - 1)^2 - i^2)}{a\ell^2} - \frac{(p-1)a^{p- 2}}{(1 + a^{p-1})}\frac{((\ell - 1)^2 - i^2)}{\ell^2} 
,
\end{align*}
\begin{align*}
\bar \rho^2_{i,\ell} &\doteq 
  \frac{1}{a }- \frac{1}{1 + a}\frac{((\ell - 1)^2 - i^2)}{\ell^2},
\end{align*}
in which every addendum is given by the asymptotic behavior of the corresponding term of $\rho_{i,\ell}^1$ and $\rho_{i,\ell}^2$. Then, choosing $I$ possibly larger and if $i \ge I$, we have for all $P \in \overline{Q(c)}$:
\begin{equation}\label{rhoest}
|\rho^1_{i,\ell}(P) - \bar \rho^1_{i,\ell}(P)| +|\rho^2_{i,\ell}(P) - \bar \rho^2_{i,\ell}(P)| \le C\eps
\left(1 +\frac{(\ell-1)^2 - i^2}{\ell^2}\right) \le C\eps.
\end{equation}
\subsection{Asymptotic behavior of $\frac{S_i}{S_\ell}$}\label{ASYSIL}
{\it We claim that for all $p\neq 2$, $p \in (1,+\infty)$, if $c$ fulfills \eqref{incre}-\eqref{increless}, for all $\eps > 0$, there exists $I = I(\eps,c)$ such that
\begin{equation}\label{SIL}
\sup_{P \in \overline{Q(c)}}\left|\frac{S_i(P)}{S_\ell(P)}\left(\frac{\ell}{i + 1}\right)^{2 G_p(a)} - 1\right| \le \eps \qquad \mbox{for all } \ell -1 \ge i \ge I.
\end{equation}
}
We let, for all $p \in (1,+\infty)$,
\begin{equation}\label{def:Sbar}
\bar S_{i,\ell} =\bar S_{i,\ell}(a)  \doteq  \left(\frac{i + 1}{\ell}\right)^{2 G_p(a)}.
\end{equation}
Indeed, fix $\eps > 0$. We have, for $\ell \ge i + 1$:
\begin{equation}\label{prodfund}
\begin{split}
\frac{S_i}{S_\ell}(P) &= \prod_{k = i + 1}^\ell\frac{x_{k-1}+g^{-1}_w(y_k)}{x_k +g^{-1}_w(y_k)}\frac{y_{k-1} + g_w(x_{k-1})}{y_k + g_w(x_{k-1})} \\
&= e^{\sum_{k = i + 1}^\ell(\ln(x_{k-1}+g^{-1}_w(y_k))- \ln(x_{k}+g^{-1}_w(y_k))}e^{\sum_{k = i + 1}^\ell(\ln(y_{k-1}+g_w(x_{k-1}))- \ln(y_{k}+g_w(x_{k-1}))}.
\end{split}
\end{equation}
Using a first order Taylor expansion and the concavity of the logarithm, we see that
\begin{equation}\label{log1}
\frac{x_{k-1} - x_k}{x_{k-1} + g_w^{-1}(y_k)} \le \ln(x_{k-1}+g^{-1}_w(y_k))- \ln(x_{k}+g^{-1}_w(y_k)) \le \frac{x_{k-1} - x_k}{x_{k} + g_w^{-1}(y_k)}.
\end{equation}
We claim that the first and last terms in \eqref{log1} are both $-\frac{2a}{a + 1}\frac{1}{k} $ up to lower order corrections, namely that
\begin{equation}
\label{eqn:claim-main}
\Big|\frac{x_{k-1} - x_k}{x_{k-1} + g_w^{-1}(y_k)}+\frac{2a}{a + 1}\frac{1}{k} \Big|+ \Big|\frac{x_{k-1} - x_k}{x_{k-1} + g_w^{-1}(y_k)}+ \frac{2a}{a + 1}\frac{1}{k} \Big| \leq \frac{C}{k^{1 + \min\{1,2(p-1)\}}}.
\end{equation}
We estimate the first term in the left-hand side of \eqref{eqn:claim-main}; the second term can be treated similarly or by comparing it with the first term. First, by the implicit function theorem, the function $w \mapsto g_w^{-1}(y)$ is $C^1$ and
\[
\partial_w (g_w^{-1}(y)) = -(p-2)\frac{wg_w^{-1}(y)}{w^2 + (p-1)(g_w^{-1}(y))^2}, \quad \forall w,y \in \R.
\]
Therefore, $\forall P \in \overline{Q(c)}$, by \eqref{invg},
\begin{equation}\label{diffgw}
|g_w^{-1}(y_k) - g_0^{-1}(y_k)| \le \frac{C}{k^2}, \quad \forall k \ge 1.
\end{equation}
We use that
$g_0^{-1}(y) = y^\frac{1}{p - 1},$
for all $y \in \R^+$, as follows from the definition of $g_w(x)$, to estimate
\begin{equation}
g_0^{-1}(y_k)-k^2=
 (k^{2(p-1) } + y_0)^\frac{1}{p-1}-{k^{2(p-1)}}^\frac{1}{p-1} \leq 
\begin{cases}
y_0^{\frac{1}{p-1}} \qquad &\mbox{ if }p>2
\\
Ck^{2(2-p)}\qquad &\mbox{ if }p<2.
\end{cases}
\end{equation}
Hence, we write for all $i + 1 \le k \le \ell$:
\begin{align*}
\left|\frac{x_{k-1} - x_k}{x_{k-1} + g_w^{-1}(y_k)} + \frac{2a}{a + 1}\frac{1}{k}\right| &= a \left|\frac{(-2k + 1)(a + 1)k+2a({k-1})^2+2x_0 + 2g_w^{-1}(y_k)}{(x_{k-1} + g_w^{-1}(y_k))(a + 1)k} \right| \\
&\le a \frac{|-3ak+k+2a+2x_0| + 2|g_w^{-1}(y_k)-g_0^{-1}(y_k)|+ 2|g_0^{-1}(y_k)-k^2|}{(x_{k-1} + g_w^{-1}(y_k))(a + 1)k} \\
&\leq C  \frac{k + k^{\max\{0, 2(2-p)\}}}{k^3} \\
&\le \frac{C}{k^{1 + \min\{1,2(p-1)\}}}.
\end{align*}
This proves the estimate of the first term in our claim \eqref{eqn:claim-main}. With \eqref{eqn:claim-main} at hand we obtain
\begin{equation}\label{log12}
\begin{split}
&\left|\sum_{k = i + 1}^\ell(\ln(x_{k-1}+g^{-1}_w(y_k))- \ln(x_{k}+g^{-1}_w(y_k)) + \frac{2a}{a + 1}\ln\left(\frac{i + 1}{\ell}\right)\right|
\\&\le 
\left|\sum_{k = i + 1}^\ell\left(\ln(x_{k-1}+g^{-1}_w(y_k))- \ln(x_{k}+g^{-1}_w(y_k))+\frac{2a}{(a + 1)k} \right) \right| + \left|-\sum_{k = i + 1}^\ell\frac{2a}{(a + 1)k} +\frac{2a}{a + 1}\ln\left(\frac{i + 1}{\ell}\right)\right|
\\ 
&\overset{\eqref{eqn:claim-main}-\eqref{Euler}}{\le} \sum_{k = i + 1}^\ell \frac{C}{k^{1 + \min\{1,2(p-1)\}}} + \frac{C}{i + 1} \overset{\eqref{Euleralpha}}{\le} \frac{C}{(i+1)^{ \min\{1,2(p-1)\}}}.
\end{split}
\end{equation}
Analogously, we can prove that
\begin{equation}\label{log13}
\left|\sum_{k = i + 1}^\ell(\ln(y_{k-1}+g_w(x_{k-1}))- \ln(y_{k}+g_w(x_{k-1})) + (p-1)\frac{2}{a^{p-1}+ 1}\ln\left(\frac{i + 1}{\ell}\right)\right| \le \frac{C}{(i + 1)^{\min\{1,2(p-1)\}}}.
\end{equation}
Estimates \eqref{log12}-\eqref{log13} combined with \eqref{prodfund} and the definition of $G_p(a)$, see \eqref{defG}, prove \eqref{SIL}.

\subsection{Estimates of $\delta_iz_i$}We now use our previous asymptotic estimates to bound $\delta_iz_i$ in \eqref{eqn:deltai_expression}. We will observe that the closeness estimates \eqref{estSbar}, \eqref{estH}, \eqref{rhoest}, \eqref{SIL} of $\sigma_{i,\ell}$ and $\bar \sigma_{i,\ell} $, $\frac{S_i}{S_\ell}$ and $\bar S_{i,\ell}$, $H^i_\ell$ and $\bar H^i_{\ell}$ $\rho^j_{i,\ell}$ and $\bar\rho^j_{i,\ell}$ put us in the position to neglect the contributions of $\sigma_{i,\ell}-\bar \sigma_{i,\ell} $, $\frac{S_i}{S_\ell}-\bar S_{i,\ell}$, $H^i_\ell-\bar H^i_{\ell}$ $\rho^j_{i,\ell}-\bar \rho^j_{i,\ell}$ in the computation of  $\delta_iz_i$ in \eqref{eqn:deltai_expression}. Indeed, these estimates imply that the growth of  $\sigma_{i,\ell}$ is the same of the growth of  $\bar \sigma_{i,\ell} $, and similarly for the other quantities. We have that
\begin{align*}
\left[\right.&\left.\sigma_{i,\ell}H_\ell +H^1_\ell \rho^1_{i,\ell} +H^2_\ell \rho^2_{i,\ell}\right] - \left[\bar\sigma_{i,\ell}\bar H_\ell +\bar H^1_\ell \bar \rho^1_{i,\ell} +\bar H^2_\ell \bar\rho^2_{i,\ell}\right]
\\
&=  (\sigma_{i,\ell}- \bar\sigma_{i,\ell})H_\ell +H^1_\ell (\rho^1_{i,\ell}-\bar \rho^1_{i,\ell}) +H^2_\ell (\rho^2_{i,\ell}-\bar \rho^2_{i,\ell}) + \bar\sigma_{i,\ell}(H_\ell -\bar H_\ell)+(H^1_\ell-\bar H^1_\ell) \bar \rho^1_{i,\ell} +(H^2_\ell - \bar H^2_\ell) \bar \rho^2_{i,\ell}.
\end{align*}
Thus, by the triangle inequality, \eqref{estSbar}-\eqref{estH}-\eqref{rhoest} yield, uniformly in $P \in \overline{Q(c)}$:
\begin{equation}
\label{tbsimp0}
\left|
\left[\sigma_{i,\ell}H_\ell +H^1_\ell \rho^1_{i,\ell} +H^2_\ell \rho^2_{i,\ell}\right] - \left[\bar\sigma_{i,\ell}\bar H_\ell +\bar H^1_\ell \bar \rho^1_{i,\ell} +\bar H^2_\ell \bar\rho^2_{i,\ell}\right]
\right| \le C\eps \left(1 + \ln\left(\frac{\ell}{i+1}\right)\right)\ell^{2(p-2) -1}.
\end{equation}
We notice moreover that, using the definitions of $\bar \sigma_{i,\ell}, \bar H_\ell$ and $\bar \rho_{i,\ell}$, we have
\begin{equation}\label{aboveest}
|\bar\sigma_{i,\ell}\bar H_\ell + \bar H^1_\ell \bar \rho^1_{i,\ell} +\bar H^2_\ell \bar\rho^2_{i,\ell}| \le |\bar\sigma_{i,\ell}\bar H_\ell|+ |\bar H^1_\ell \bar \rho^1_{i,\ell} +\bar H^2_\ell \bar\rho^2_{i,\ell}| \le C\left(1 + \ln\left(\frac{\ell}{i+1}\right)\right)\ell^{2(p-2)-1}.
\end{equation}
Similarly, exploiting \eqref{SIL}-\eqref{tbsimp0}-\eqref{aboveest}, we can estimate uniformly in $P \in \overline{Q(c)}$:
\begin{equation}
\label{tbsimp1}
\left|
\frac{S_i}{S_\ell}\left[\sigma_{i,\ell}H_\ell +H^1_\ell \rho^1_{i,\ell} +H^2_\ell \rho^2_{i,\ell}\right] - \bar S_{i,\ell}\left[\bar\sigma_{i,\ell}\bar H_\ell +\bar H^1_\ell \bar \rho^1_{i,\ell} +\bar H^2_\ell \bar\rho^2_{i,\ell}\right]
\right| \le C\eps \bar S_{i,\ell}\left(1 + \ln\left(\frac{\ell}{i+1}\right)\right)\ell^{2(p-2) -1}.
\end{equation}
We need now to divide the cases $p > 2$ and $1 < p < 2$. We will show that for all $c$ sufficiently small if $p > 2$ and $c$ sufficiently large if $p < 2$, and for a possibly larger $I$,
\begin{equation}\label{conc}
\left(\sum_{\ell = i + 1}^\infty\frac{S_i}{S_\ell}(\sigma_{i,\ell}H_\ell+ H^1_\ell \rho^1_{i,\ell} +H^2_\ell \rho^2_{i,\ell})\right)(P) \ge C'i^{2(p-2)-1},
\end{equation}
for all $i + 1 \ge I$, for a positive constant $C' > 0$. Before dividing the two cases, we use integral comparison to find that, for all $i \ge 2$ and $a \in [c,2c]$, with $c$ chosen such that \eqref{incre}-\eqref{increless} hold:
\begin{equation}\label{tail}
\left|\sum_{\ell = i + 1}^\infty \bar S_{i,\ell} \ln\left(\frac{\ell}{i + 1}\right)\ell^{2(p-2) - 1} - \frac{(i + 1)^{2(p-2)}}{4 (G_p(a) - p + 2)^2}\right| \le 2\ln(i + 1)(i + 1)^{2(p-2)-1}
\end{equation}
\begin{equation}\label{tail1}
\begin{split}
\left|\sum_{\ell = i + 1}^\infty \bar S_{i,\ell}\left( \frac{i^2}{(i + 1)^2} -  \frac{i^2}{\ell^2}\right)\ell^{2(p-2)-1} \right.&\left.- \left[\frac{1}{2(2-p) + 2G_p(a)} - \frac{1}{2(2-p) + 2G_p(a) + 2}\right](i + 1)^{2(p-2)}\right|\\
&\le 10(i+1)^{2(p-2) - 1}
\end{split}
\end{equation}
\begin{equation}\label{tail2}
\begin{split}
\left|\sum_{\ell = i + 1}^\infty \bar S_{i,\ell} \ell^{2(p-2)-1}\frac{(\ell - 1)^2 - i^2}{\ell^2} \right.&\left.- \left[\frac{1}{2(2-p) + 2G_p(a)} - \frac{1}{2(2-p) + 2G_p(a) + 2}\right](i + 1)^{2(p-2)}\right|\\
&\le 10(i+1)^{2(p-2) - 1}.
\end{split}
\end{equation}
\\
\\
\fbox{The case $p > 2$:} according to \eqref{eqn:choice-c}-\eqref{Euleralpha}-\eqref{tbsimp1} and \eqref{tail}, to show \eqref{conc} it is sufficient to prove
\begin{equation}\label{concc}
\sum_{\ell = i + 1}^\infty\bar S_{i,\ell}( \bar\sigma_{i,\ell}\bar H_{\ell} + \bar H^1_\ell \bar \rho^1_{i,\ell} +\bar H^2_\ell \bar\rho^2_{i,\ell})(P) \ge C'i^{2(p-2)}.
\end{equation}
We see that there exists a constant $L > 0$ (that may change line-by-line) independent of $c$ once we choose for instance $c < 1$, such that for all $\ell \ge i + 1 \ge 2$, $P \in Q(c)$,
\begin{equation}\label{Hsig}
|\bar H_\ell \bar \sigma_{i,\ell}|(P) \le c^{\min\{1,p-2\}}L\left(1 + \ln\left(\frac{\ell}{i + 1}\right)\right)\ell^{2(p-2)-1}.
\end{equation}
On the other hand, if $m = \min_{\overline{Q(c)}}\frac{2 w}{a + 1} > 0$,
\begin{equation}\label{Hsigro}
(\bar H^1_\ell \bar \rho^1_{i,\ell} +\bar H^2_\ell \bar\rho^2_{i,\ell})(P) \ge  \frac{2 w}{a + 1}\ell^{2(p-2)-1} - L c^{\min\{1,p-1,2(p-2)\}}\ell^{2(p-2)-1} \ge (m - L c^{\min\{1,2(p-2)\}})\ell^{2(p-2)-1}.
\end{equation}
Combining \eqref{Euleralpha}-\eqref{tail}-\eqref{Hsig}-\eqref{Hsigro} with \eqref{eqn:choice-c}, we can estimate for all $P \in \overline{Q(c)}$,
\[
\sum_{\ell = i + 1}^\infty\bar S_{i,\ell}(\bar\sigma_{i,\ell}\bar H_{\ell} +\bar H^1_\ell \bar \rho^1_{i,\ell} +\bar H^2_\ell \bar\rho^2_{i,\ell})(P) \ge (L_1m - L_2(c^{\min\{1,2(p-2)\}}+c^{\min\{1,p-2\}}))(i + 1)^{2(p-2)},
\]
for some constants $L_1(p), L_2(p) > 0$. If $c$ and $\eps$ of \eqref{tbsimp1} are chosen sufficiently small, the latter implies \eqref{concc} and hence \eqref{conc}.
\\
\\
\fbox{The case $1 < p < 2$:} we wish to show that, for all $c$ sufficiently large there exists a constant $C' = C'(c)> 0$, such that for all $P \in \overline{Q(c)}$ and for all $i \ge I = I(c)$:
\begin{equation}\label{concclessa}
\sum_{\ell = i + 1}^\infty a\bar S_{i,\ell}( \bar\sigma_{i,\ell}\bar H_{\ell} + \bar H^1_\ell \bar \rho^1_{i,\ell} +\bar H^2_\ell \bar\rho^2_{i,\ell})(P) \ge C'i^{2(p-2)}.
\end{equation}
This would show, since $a \in (c,2c)$, that
\[
 \sum_{\ell = i + 1}^\infty\bar S_{i,\ell}( \bar\sigma_{i,\ell}\bar H_{\ell} + \bar H^1_\ell \bar \rho^1_{i,\ell} +\bar H^2_\ell \bar\rho^2_{i,\ell})(P) \ge \frac{C'}{2c}i^{2(p-2)}.
\]
As above, by \eqref{eqn:choice-c}-\eqref{tbsimp1} and \eqref{tail}, the latter would imply \eqref{conc}, once $\eps$ of \eqref{tbsimp1} is chosen small enough in terms of $\frac{C'}{2c}$. To show \eqref{concclessa}, we introduce the following notation: here $g_k(a)$, for $k \in \N$, is a continuous function $g_k: \R^+ \to \R$ such that for all $a$ sufficiently large, one has
\[
|g_k(a)| \le \frac{L_k}{a^{p-1}},
\]
for some constant $L_k > 0$. With this notation, we rewrite for all $a \in \R^+$, $\ell \ge i + 1\ge 1$,
\[
a\bar \sigma_{i,\ell} =   \frac{a^2}{(1 + a)^2}\left[\frac{i^2}{\ell^2}-\frac{i^2}{(i+1)^2}\right] + g_1(a)\ln\left(\frac{\ell}{i +1}\right) + g_2(a)\left[\frac{i^2}{\ell^2} - \frac{i^2}{(i+1)^2}\right],
\]
and
\[
\bar H_\ell = \frac{2aw}{ a + 1}\ell^{2(p-2)-1} + g_3(a) \ell^{2(p-2) - 1}.
\]
Therefore,
\begin{equation}\label{a:first}
\begin{split}
a\bar \sigma_{i,\ell}\bar H_\ell &= \frac{2a^3w}{(1 + a)^3}\ell^{2(p-2) - 1}\left[\frac{i^2}{\ell^2}-\frac{i^2}{(i+1)^2}\right] \\
&\quad+ \left(g_5(a)\ln\left(\frac{\ell}{i +1}\right) + g_6(a)\left[\frac{i^2}{\ell^2} - \frac{i^2}{(i+1)^2}\right]\right) \ell^{2(p-2)-1}.
\end{split}
\end{equation}
The term $a( \bar H^1_\ell \bar \rho^1_{i,\ell} +\bar H^2_\ell \bar\rho^2_{i,\ell})$ is more complicated, but it admits the similar representation:
\begin{equation}\label{a:second}
\begin{split}
a( \bar H^1_\ell \bar \rho^1_{i,\ell} +\bar H^2_\ell \bar\rho^2_{i,\ell}) &= \frac{2aw}{a + 1}\ell^{2(p-2)-1} - \frac{2a^2w}{(a+1)^2}\ell^{2(p-2)-1}\frac{(\ell - 1)^2 - i^2}{\ell^2}\\
&\quad+ g_6(a)\ell^{2(p-2)-2} + (g_7(a) + \ell g_8(a))\ell^{2(p-2)-2}\frac{(\ell - 1)^2 - i^2}{\ell^2}.
\end{split}
\end{equation}
Now we can exploit \eqref{Euleralpha}-\eqref{a:first}-\eqref{a:second}-\eqref{tail1}-\eqref{tail2}, and the definition of $\bar S_{i,\ell}$ of \eqref{def:Sbar} to write
\begin{align*}
\sum_{\ell = i + 1}^\infty &a\bar S_{i,\ell}\bar\sigma_{i,\ell}\bar H_{\ell}(P) =  f_1(a)w(i + 1)^{2(p-2)} + R_{1,i}(a,w) \\
& =\left[\frac{1}{2(2-p) + 2 + 2G_p(a)}-\frac{1}{2(2-p) + 2G_p(a)}\right]\frac{2a^3w}{(1 + a)^3}(i+1)^{2(p-2)} + R_{1,i}(a,w),
\end{align*}
and
\begin{align*}
&\sum_{\ell = i + 1}^\infty a\bar S_{i,\ell}( \bar H^1_\ell \bar \rho^1_{i,\ell} +\bar H^2_\ell \bar\rho^2_{i,\ell})(P) = f_2(a)w(i + 1)^{2(p-2)}+ f_3(a)w(i + 1)^{2(p-2)} + R_{2,i}(a,w)\\
&\quad = \left[\frac{1}{2(2-p) + 2G_p(a)}\right]\frac{2aw(i+1)^{2(p-2)}}{a + 1}\\
&\qquad\quad-\left[\frac{1}{2(2-p)+2G_p(a)}-\frac{1}{2(2-p)+2+2G_p(a)}\right]\frac{2a^2w(i+1)^{2(p-2)}}{(a+1)^2} + R_{2,i}(a,w).
\end{align*}
Here, $R_{j,i}(a,w)$ are continuous functions for all $j = 1,2$, $i \in \N$, with the property that
\[
|R_{1,i}(a,w)| + |R_{2,i}(a,w)| \le \frac{k(i + 1)^{2(p-2)}}{a^{p-1}},
\]
for all $i \in N$, $w \in \left(\frac{3}{4},\frac{5}{4}\right)$ and $a > 0$ sufficiently large, and for a positive constant $k$ depending solely on $p$ once $c$ is chosen large enough. Notice that $R_{3,i}(a,w) \doteq R_{a,i}(a,w) + R_{2,i}(a,w)$ enjoys the same bounds. As $w \in \left(\frac{3}{4},\frac{5}{4}\right)$, in order to show \eqref{concclessa}, it is sufficient to prove that
\[
\liminf_{a\to \infty}f_{1}(a) + f_2(a) + f_3(a) > 0.
\]
A direct computation that exploits $\lim_{a\to \infty}G_p(a) = 1$, see \eqref{defG}, shows that
\[
\lim_{a \to \infty}f_{1}(a) + f_2(a) + f_3(a) = \frac{2(2-p)}{(2(2-p)+4)(2(2-p) + 2)},
\]
which is positive for all $p < 2$. This concludes \eqref{concclessa}.
\\
\\
We collect the results we just obtained in the following:

\begin{prop}\label{ansmag2}
Let $p \neq 2$, $p \in(1,+\infty)$ and $S_{i}$, $H_\ell$, $\rho^j_{i,\ell}$, $\sigma_{i,\ell}$ be defined in \eqref{Sell}-\eqref{Hell}-\eqref{deltaHell1}-\eqref{deltaHell2}-\eqref{sigmaiell} respectively. Recall that $Q(c) = (c,2c)\times \left(\frac{3}{4},\frac{5}{4}\right)\times \left(\frac{3}{4},\frac{5}{4}\right)\times \left(\frac{3}{4},\frac{5}{4}\right)$. Then, for all $c > 0$ sufficiently small if $p > 2$ and $c > 0$ sufficiently large if $p < 2$, there exists $I = I(c) \in \N$ such that for all $i \ge I$:
\begin{enumerate}[(a)]
\item\label{a1} The convergence of the series $\sum_{\ell = 1}^\infty \frac{H_\ell(P)}{S_\ell(P)}$ is uniform in $\overline{Q(c)}$. In particular, 
\[
P \mapsto z_i(P) = S_i(P)\sum_{\ell = i + 1}^\infty \frac{H_\ell(P)}{S_\ell(P)}
\]
is well-defined for all $P \in \overline{Q(c)}$ and continuous on $\overline{Q(c)}$;
\item\label{b1} $\sup_{P \in \overline{Q(c)}}|z_i(P)| \le C i^{2(p-2)}$, for all $i \ge 1$, and for some positive $C > 0$;
\item\label{c1} The convergence of the series $\sum_{\ell = 1}^\infty \frac{1}{S_\ell}[H_\ell\sigma_{i,\ell} + +H^1_\ell \rho^1_{i,\ell} +H^2_\ell \rho^2_{i,\ell}]$ is uniform in $Q(c)$. In particular,  \[
(\partial_a z_i - i^{2} \partial_{x_0}z_i)(P)
\]
exists at all points $P \in Q(c)$, is continuous, and is represented precisely by
\[
(\partial_a z_i - i^{2} \partial_{x_0}z_i)(P) =\sum_{\ell = 1}^\infty \frac{S_i}{S_\ell}[-H_\ell\sigma_{i,\ell} +H^1_\ell \rho^1_{i,\ell} +H^2_\ell \rho^2_{i,\ell}].
\]
\item\label{d1} at all $P \in Q(c)$,
\[
(\partial_a z_i - i^{2} \partial_{x_0}z_i)(P) > 0.
\]
In particular, the assumptions of Lemma \ref{simpl} hold true and the mappings $\Phi_{i,t}^1,\Phi_{i,t}^2,A_i$ are open for all $t \in[0,1)$ and $i \ge I$.
\end{enumerate}
\end{prop}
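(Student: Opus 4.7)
The proof is a matter of collecting and re-packaging the asymptotic estimates painstakingly derived in Sections~\ref{choice}--\ref{ASYSIL}. My plan is to address the four items in the natural order (a) $\Rightarrow$ (b) $\Rightarrow$ (c) $\Rightarrow$ (d), so that each part uses only what has already been proved at a uniform level on $\overline{Q(c)}$.

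For \eqref{a1} and \eqref{b1}, the plan is to combine the two-sided control of $S_i/S_\ell$ given by \eqref{SIL} with the asymptotic estimate \eqref{estH} for $H_\ell$. More precisely, for $\ell\ge i+1\ge I$ one has a uniform bound of the form $|S_i(P)/S_\ell(P)| \le 2\,((i+1)/\ell)^{2G_p(a)}$ and $|H_\ell(P)| \le C\,\ell^{2(p-2)-1}$, so the general term of the series $S_i\sum_{\ell\ge i+1} H_\ell/S_\ell$ is dominated by $C(i+1)^{2G_p(a)}\ell^{2(p-2)-1-2G_p(a)}$. Since $G_p(a) > \max\{1,p-1\}$ by \eqref{eqn:choice-c}, the exponent $2(p-2)-1-2G_p(a)$ is strictly less than $-3$ on $\overline{Q(c)}$, the Weierstrass M-test yields uniform convergence, and integral comparison (or \eqref{Euleralpha}) gives the explicit bound $|z_i(P)|\le C(i+1)^{2(p-2)}$. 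Continuity of $z_i$ then follows from the continuity of each partial sum together with uniform convergence.

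For \eqref{c1}, the plan is to apply $\delta_i = \partial_a - i^2\partial_{x_0}$ to the finite partial sums $S_i\sum_{\ell=i+1}^N H_\ell/S_\ell$, obtaining exactly the expression \eqref{eqn:deltai_expression} truncated at $N$. Then I would invoke the estimates \eqref{estSbar}, \eqref{estH}, \eqref{rhoest} and \eqref{SIL} together with the comparison estimate \eqref{tbsimp1} to show that the absolute value of the $\ell$-th term is dominated (uniformly on $\overline{Q(c)}$) by a summable tail of the form $C\,\bar S_{i,\ell}(1+\ln(\ell/(i+1)))\ell^{2(p-2)-1}$. The tail estimates \eqref{tail}, \eqref{tail1}, \eqref{tail2} ensure this is summable under our choice of $c$, so Weierstrass M-test again gives uniform convergence, hence one may pass $\delta_i$ inside the series and conclude continuity and the explicit representation.

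The only genuinely delicate part is \eqref{d1}, which is the main obstacle: the whole point is that although we control closeness to the barred quantities, these asymptotic quantities are themselves oscillating in sign (see \eqref{a:first}--\eqref{a:second}), and one must extract a strictly positive leading constant. The plan here is to argue separately in the two regimes. For $p>2$ and $c$ small, I would substitute the barred quantities into $\sum_\ell \bar S_{i,\ell}(\bar\sigma_{i,\ell}\bar H_\ell + \bar H^1_\ell\bar\rho^1_{i,\ell} + \bar H^2_\ell\bar\rho^2_{i,\ell})$, use \eqref{tail}--\eqref{tail2} to isolate a leading $(i+1)^{2(p-2)}$ term with coefficient $L_1 m - L_2(c^{\min\{1,2(p-2)\}} + c^{\min\{1,p-2\}})$ where $m = \min_{\overline{Q(c)}} 2w/(a+1) > 0$, and then choose $c$ (and correspondingly $\eps$ in \eqref{tbsimp1}) small enough that this is strictly positive, which yields \eqref{conc}. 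For $1<p<2$ and $c$ large, the plan is to multiply through by $a$ to normalize the scale (as in \eqref{concclessa}), use the remainder notation $g_k(a) = O(a^{-(p-1)})$ to isolate the leading coefficient $f_1(a)+f_2(a)+f_3(a)$ and verify by direct calculation, using $\lim_{a\to\infty} G_p(a)=1$, that its limit equals $\tfrac{2(2-p)}{(2(2-p)+4)(2(2-p)+2)}>0$, so the sum is bounded below by a positive multiple of $(i+1)^{2(p-2)}$ for $c$ large enough. In both cases, the strict positivity of $\delta_i z_i$ then feeds directly into Lemma~\ref{simpl} to conclude openness of $A_{i+1}$ and $\Phi^k_{i+1,t}$.
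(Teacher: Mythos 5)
Your proposal is correct and follows essentially the same route as the paper: parts \eqref{a1}--\eqref{b1} via the Weierstrass bound $|S_iH_\ell/S_\ell|\le C((i+1)/\ell)^{2G_p(a)}\ell^{2(p-2)-1}$ combined with \eqref{eqn:choice-c} and \eqref{Euleralpha}, part \eqref{c1} by term-by-term differentiation justified through \eqref{estSbar}, \eqref{estH}, \eqref{rhoest}, \eqref{SIL} and \eqref{tbsimp1}, and part \eqref{d1} by the two-regime analysis of Section \ref{estder} culminating in \eqref{conc} and \eqref{concclessa}. No substantive difference from the paper's argument.
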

\begin{proof}
We see from \eqref{estH} that
\begin{equation}\label{estglob}
\max_{P \in \overline{Q(c)}}|H_\ell(P)| \le C\ell^{2(p-2) - 1}.
\end{equation}
Having chosen $c$ such that \eqref{eqn:choice-c} hold, we estimate for all $P = (a,x_0,y_0,w) \in \overline{Q(c)}$
\[
\left|\frac{S_i(P)}{S_\ell(P)}H_\ell(P)\right| \overset{\eqref{SIL}-\eqref{estglob}}{\le}C\left(\frac{i + 1}{\ell}\right)^{2 G_p(a)}\ell^{2(p-2)-1} \overset{\eqref{eqn:choice-c}}{\le} C\left(\frac{i + 1}{\ell}\right)^{2 \max\{1,p-1\}}\ell^{2(p-2)-1},
\]
for all $\ell \ge i + 1$. Through \eqref{ziexp} and \eqref{Euleralpha}, we conclude that $z_i$ is well-defined and continuous for all $i \ge 0$, that is \eqref{a1}, and that it enjoys property $\eqref{b1}$. Finally, \eqref{c1} readily follows from \eqref{aboveest}-\eqref{tbsimp1}, \eqref{SIL} and \eqref{Euleralpha}-\eqref{tail}, and \eqref{d1} is the content of \eqref{conc}.
\end{proof}

In what follows, $c$ is chosen so that \eqref{incre}-\eqref{increless} and Proposition \ref{ansmag2} hold, and will be considered a fixed parameter.

\section{Some further properties of $\Phi^k_{i,t}$ and $A_i$}\label{further}

As in the previous section, the domain of the maps we will consider is $Q(c)$, where $c>0$ is fixed by \eqref{incre}-\eqref{increless} and Proposition \ref{ansmag2}. We also let $I = I(c)$ the index for which the conclusion of the aforementioned proposition holds. This index will be made (possibly) larger in the next Lemma, but will still be denoted by $I$. Denote, as before, points of $Q(c)$ by $P$.

\begin{lemma}\label{sigma}
There exists $0 < t_0 = t_0(c)< 1$ such that if $t \in [t_0,1]$, the sets
\[
\Phi^1_{i_1,t}(\overline{Q(c)}),\quad \Phi^1_{i_2,t}(\overline{Q(c)}) \text{ and } A_{i_3}(\overline{Q(c)})
\]
are pairwise disjoint for all $i_1,i_2,i_3 \ge I$, where $I = I(c)$ may be larger than the one of Propositions \ref{ansmag2}.
\end{lemma}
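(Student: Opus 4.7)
The plan is to reduce each pairwise intersection to an impossible coincidence among the matrix entries provided by \eqref{A} and \eqref{phi1}, handling the two types of intersections separately.

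For the separation $\Phi^1_{i,t}(\overline{Q(c)}) \cap A_{i_3}(\overline{Q(c)}) = \emptyset$, I would compare the $(2,2)$ entries. The $(2,2)$ entry of $A_{i_3}(P)$ equals $y_{i_3-1}(P) \ge 3/4 > 0$, while that of $\Phi^1_{i,t}(P)$ equals $(1-t)y_{i-1}(P) - t\, g_w(x_{i-1}(P))$. For $i \ge I$ and $P\in\overline{Q(c)}$, the ratio $g_w(x_{i-1})/y_{i-1}$ is asymptotically equal to $a^{p-1}$ and so uniformly bounded below by some $m=m(c)>0$. Choosing $t_0$ with $(1-t_0)/t_0 < m/2$ forces the $(2,2)$ entry of $\Phi^1_{i,t}$ to be strictly negative for every $t \in [t_0,1]$, and disjointness from $A_{i_3}$ follows from the sign mismatch.

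For the harder separation $\Phi^1_{i_1,t}(\overline{Q(c)}) \cap \Phi^1_{i_2,t}(\overline{Q(c)}) = \emptyset$ with $i_1 \neq i_2$, I would argue by contradiction: if $\Phi^1_{i_1,t}(P_1) = \Phi^1_{i_2,t}(P_2)$, then matching the $(1,2)$ and $(1,1)$ entries gives $w_1 = w_2 =: w$ and $x_{i_1-1}(P_1) = x_{i_2-1}(P_2)$, so that $g_w$ and $h_w$ at the common argument coincide. For $t \in [t_0,1)$, matching the $(2,2)$ entry then forces $y_{i_1-1}(P_1) = y_{i_2-1}(P_2)$, i.e.
\[
(i_2-1)^{2(p-1)} - (i_1-1)^{2(p-1)} = y_0^1 - y_0^2 \in (-1/2,\,1/2).
\]
For $p \ge 3/2$ the mean value theorem gives $|(i_2-1)^{2(p-1)} - (i_1-1)^{2(p-1)}| \ge 2(p-1)|i_1-i_2| \ge 1$ (using $2p-3\ge 0$ and $I\ge 2$), a contradiction.

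The hard part will be the residual regime $1 < p < 3/2$, in which $2(p-1) < 1$ is sublinear and the $y$-equation is easily satisfied for consecutive large indices. To close this case I would additionally match the $(2,1)$ entry, which for $t < 1$ forces $z_{i_1-1}(P_1) = z_{i_2-1}(P_2)$, and then split
\[
0 = z_{i_1-1}(P_1) - z_{i_2-1}(P_2) = \bigl(z_{i_1-1}(P_1) - z_{i_2-1}(P_1)\bigr) + \bigl(z_{i_2-1}(P_1) - z_{i_2-1}(P_2)\bigr).
\]
The first summand has a definite sign and size of order $i^{2(p-2)-1}$ by the leading asymptotics extracted in Section \ref{ASYSIL}, while the second is controlled by a first-order Taylor expansion of $z_{i_2-1}$ applied to $P_2 - P_1$, whose components are pinned down (and small) by the $(1,1)$ and $(2,2)$ equations together with $a_j \in (c,2c)$. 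Exploiting the strict positivity \eqref{deriva} and the quantitative bounds of Proposition \ref{ansmag2} and choosing $I$ large enough will force these two contributions to be of incompatible sizes, yielding the desired contradiction and completing the proof.
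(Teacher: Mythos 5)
Your first step (separating $\Phi^1_{i,t}(\overline{Q(c)})$ from $A_{i_3}(\overline{Q(c)})$ by the sign of the $(2,2)$ entry after pushing $t_0$ close to $1$) is exactly the argument the paper makes for \eqref{primo}--\eqref{terzo}. However, you never address $\Phi^2_{i,t}$ at all, and this is the crux: the lemma as printed contains a typo --- the second map should be $\Phi^2_{i_2,t}$, not $\Phi^1_{i_2,t}$. The paper's proof makes this clear: \eqref{primo}--\eqref{terzo} establish sign patterns of the $(1,1)$ and $(2,2)$ entries that force $\Phi^1_{i,t}(\overline{Q(c)})$, $\Phi^2_{j,t}(\overline{Q(c)})$ and $A_k(\overline{Q(c)})$ into three different (closed) ``quadrants'' of the $((1,1),(2,2))$-plane, from which the whole lemma is immediate and uniform in $t\in[t_0,1]$. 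The sign argument for $\Phi^2$ (proving the $(1,1)$ entry is negative for $t$ near $1$, using \eqref{invg}) is the one piece you would need to add and it is entirely parallel to the one you already carried out for $\Phi^1$. The paper also proves \emph{within the proof} a further separation statement \eqref{claimm}--\eqref{dist} needed later for same-type disjointness ($\Phi^k_i$ vs.\ $\Phi^k_j$ and $A_i$ vs.\ $A_j$), but there it does not match entries at all: it simply observes that $x_i$ and $y_i$ are monotone and well-separated across indices, so the $(1,1)$ (resp.\ $(2,2)$) entry alone distinguishes $\Phi^1$- (resp.\ $\Phi^2$-) images at different indices.

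For the part you invest most effort in, the $\Phi^1_{i_1,t}$ vs.\ $\Phi^1_{i_2,t}$ separation for $i_1\neq i_2$, your entry-matching route is genuinely different from the paper's, and it has two real defects. First, at $t=1$ the $(2,2)$ and $(2,1)$ equations degenerate (the $(1-t)$ factor vanishes), so the argument gives no information precisely at the endpoint $t=1$ that the lemma's hypothesis includes; in fact $B_i(\overline{Q(c)})$ and $B_{i+1}(\overline{Q(c)})$ do overlap for large $i$ (the $x$-ranges $[c(i-1)^2+\tfrac34,\,2c(i-1)^2+\tfrac54]$ are nested for consecutive $i$), so the claim you are trying to establish is actually \emph{false} at $t=1$ --- another sign that the intended reading is $\Phi^1$ vs.\ $\Phi^2$. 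Second, for the residual regime $1<p<3/2$ you correctly note the $y$-equation is no obstruction and propose to exploit the $(2,1)$ entry and the derivative estimate \eqref{deriva}, but what you sketch is not a proof: the increment $z_{i_1-1}(P_1)-z_{i_2-1}(P_1)$ is not controlled by \eqref{deriva} (which concerns the variation in $(a,x_0)$ at a fixed index, not the variation across indices), and the ``definite sign and size of order $i^{2(p-2)-1}$'' you invoke is not established anywhere. The conclusion ``exploiting \eqref{deriva}\ldots will force these two contributions to be of incompatible sizes'' is a hope, not an argument. So the proposal covers one of the three pairwise comparisons correctly, misses $\Phi^2$ entirely, and the same-index comparison it attacks is the wrong one and is handled only in a partial range of $p$ and $t$.
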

\begin{proof}
We first show the following auxiliary statements. There exist $k > 0$, $I \in \N$ and $0 < t_0 < 1$ such that $\forall i \ge I, t \in [t_0,1], P = (a,x_0,y_0,w) \in \overline{Q(c)}$
\begin{equation}\label{primo}
\Phi^1_{i,t}(x_0,y_0,z_0,w) \subset \left\{\left(\begin{array}{cc} x & w\\ z& y\end{array}\right) \in \R^{2\times 2}: x \ge k, y \le -k\right\}, 
\end{equation}
\begin{equation}\label{secondo}
\Phi^2_{i,t}(x_0,y_0,z_0,w) \subset \left\{\left(\begin{array}{cc} x & w\\ z& y\end{array}\right) \in \R^{2\times 2}: x \le -k, y \ge k\right\},
\end{equation}
\begin{equation}\label{terzo}
A_i(a,x_0,y_0,w) \subset  \left\{\left(\begin{array}{cc} x & w\\ z& y\end{array}\right) \in \R^{2\times 2}: x \ge k, y \ge k\right\}.
\end{equation}
\\
\\
\fbox{Proof of \eqref{primo}.} To prove \eqref{primo}, we only need to show that $y_{i-1} - t(y_{i-1} + g_{w}(x_{i-1}))$ is (uniformly) negative for all $P \in \overline{Q(c)}$ if $t_0$ is sufficiently close to $1$ and $i \ge I$. Indeed, the fact that $P\mapsto x_{i-1}(P)$ is (uniformly) positive is an immediate consequence of its definition. We have
\begin{align*}
y_{i-1} -t(&y_{i-1} + g_w(x_{i-1})) = (1-t)((i-1)^{2(p-1)} + y_0) - t|(a(i-1)^2 + x_0,w)|^{p-2}(a(i-1)^2 + x_0).
\end{align*}
If we divide the latter by $i^{2(p-1)}$ and send $i \to \infty$, we obtain, uniformly in $P \in \overline{Q(c)}$,
\[
(1-t) - a^{p-1}t,
\]
and we can estimate
\[
(1-t) - a^{p-1}t  \le (1-t) - c^{p-1}t 
\]
Therefore, if $t_0$ is sufficiently close to $1$, we see that \eqref{primo} holds for $I$ large.
\\
\\
\fbox{Proof of \eqref{secondo}.} Analogously to the proof of \eqref{primo}, we have that $y_i$ is always positive, and hence we shall only prove that $x_{i-1} - t(g^{-1}_w(y_i) + x_{i-1})$ is (uniformly) negative for all $P \in \overline{Q(c)}$ if $t_0$ is sufficiently close to $1$ and $i \ge I$. To do so, we write:
\begin{align*}
x_{i-1} - t(g^{-1}_w(y_i) - x_{i-1}) &= (1-t)x_{i-1} - tg^{-1}_w(y_i).
\end{align*}
Recalling \eqref{invg}, we divide the latter by $i^2$ and we let $i \to \infty$ to obtain, uniformly in $P \in \overline{Q(c)}$,
\[
(1-t)a - t \le (1-t)2c - t.
\]
Therefore, if $t_0$ is sufficiently close to $1$, \eqref{secondo} holds for $I$ large and uniformly in $P \in \overline{Q(c)}$.
\\
\\
\fbox{Proof of \eqref{terzo}.} This is immediate, since $\{x_i\}, \{y_i\}$ are uniformly positive by their definition.
\\
\\
Clearly, \eqref{primo}-\eqref{secondo}-\eqref{terzo} imply that
\[
\Phi^{1}_{i,t}(\overline{Q(c)})\cap \Phi^{2}_{j,t}(\overline{Q(c)}) = \emptyset,\quad \Phi^{1}_{i,t}(\overline{Q(c)})\cap A_j(\overline{Q(c)}) = \emptyset, \quad \Phi^{2}_{i,t}(\overline{Q(c)})\cap A_j(\overline{Q(c)}) = \emptyset,
\]
provided $i, j \ge I$, $t \in [t_0,1]$, $I$ is large enough and $t_0$ is close enough to $1$. To conclude the proof of the Lemma, we now claim that if $I$ is sufficiently large, then for all $i > j > I$:
\begin{equation}\label{claimm}
x_i(x_0) > x_{j}(x_0') + 1, \quad y_i(y_0) > y_{j}(y_0') + 1, \quad \forall i > j \ge I, \quad \forall x_0,y_0,x_0',y_0' \in \left(\frac{3}{4},\frac{5}{4}\right).
\end{equation}
Using the definitions of $\Phi^{k}_{i,t}$, it is easy to see that \eqref{claimm} implies, $\forall k=1,2, i > j \ge I, t \in [t_0,1]$,
\begin{equation}\label{dist}
\dist(\Phi^k_{i,t}(\overline{Q(c)}),\Phi^k_{j,t}(\overline{Q(c)})) \ge 1,\quad \dist(A_i(\overline{Q(c)}),A_j(\overline{Q(c)})) \ge 1,
\end{equation}
that in turn yields the conclusion of the present proof.
\\
\\
We only need to show \eqref{claimm}. We can estimate
\[
ai^2- aj^2 \ge 2 aj(i-j) \ge 2 aj \ge 2 a I,\quad \forall i > j \ge I.
\]
Thus, recalling that $x_0,x_0' \in \left(\frac{3}{4},\frac{5}{4}\right)$,
\[
x_i(x_0)-x_j(x_0') = a(i^2 -j^{2}) + x_0 - x_0' \ge 2 a I - \frac{1}{2} \ge 2 c I - \frac{1}{2}.
\]
If $I = I(c) > 0$ is sufficiently large, we see that the first part of \eqref{claimm} holds. To show the second part, i.e. the analogous estimate for $y_i$, we can repeat the exact same proof with small modifications. This concludes the proof of the Lemma.
\end{proof}
We end this section with one last technical lemma. First, let us introduce the following sets. Define, for $k =1,2, i\ge 1$, $q \ge 1$ and $t_q \doteq 1 -\frac{1-t_0}{2^q}$,
\[
U^k_{i,q}\doteq \Phi_{i,t_q}^k(Q(c)) \quad\text{ and }\quad U_i^3 \doteq A_i(Q(c)).
\]
The reason to update $t_q \to 1$ is that, as $q\to \infty$, we wish the sets $U^k_{i,q}$ to converge to subsets of $K_p$ in a sense that is made rigorous in the next lemma.
Recall that $I \in \N$ is a sufficiently large index so that Proposition \ref{ansmag2} holds together with Lemma \ref{sigma}. Finally, set for $n \ge 1$
\begin{equation}\label{VN}
V_{n} \doteq \bigcup_{i = I}^{I + n-1} U_{i,I + n - 1}^1\cup \bigcup_{i = I}^{I + n-1} U_{i,I +n -1}^2 \cup U_{I + n}^3.
\end{equation}
Notice that by Lemma \ref{simpl} and Proposition \ref{ansmag2}, each of the above sets is open and, by Lemma \ref{sigma}, the union defining $V_n$ is disjoint, for all $n \ge I +1$. We will still need to update the value of $I$ in some of the next results. We modify the definition of $V_n$ accordingly.

\begin{lemma}\label{conv}
Let $Z_n \in V_n$, $\forall n \ge 1$. Suppose that a subsequence $\{Z_{n_j}\}_j$ converges to $Z$. Then,
\begin{equation}\label{ZZ}
Z \in \bigcup_{i=I}^\infty \Phi^1_{i,1}(\overline{Q(c)})\cup\bigcup_{i= I}^\infty\Phi^2_{i,1}(\overline{Q(c)}) \overset{\eqref{endpoints}}{=} \bigcup_{i=I}^\infty B_i(Q(c))\cup\bigcup_{i=I}^\infty D_i(Q(c)) \overset{\eqref{BE}-\eqref{CD}}{\subset} K_{p}.
\end{equation}
\end{lemma}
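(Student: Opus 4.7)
The plan is to exploit the structure of $V_n$, which by Lemma~\ref{sigma} decomposes into three mutually disjoint families of images: the ``level-$1$'' sets $\Phi^1_{i,t_{I+n-1}}(Q(c))$ for $i\in\{I,\dots,I+n-1\}$, the ``level-$2$'' sets $\Phi^2_{i,t_{I+n-1}}(Q(c))$ in the same index range, and the lone ``error'' image $A_{I+n}(Q(c))$. Because $t_q = 1 - (1-t_0)/2^q \to 1$ as $q\to\infty$, one expects the first two families to accumulate precisely on the endpoint sets $B_i(\overline{Q(c)})$ and $D_i(\overline{Q(c)})$, while the error piece must diverge and cannot contribute to any bounded limit.

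First I would run a pigeonhole argument: each $Z_{n_j}$ belongs to exactly one of these three types, so after passing to a subsequence I may assume the type is constant. In the $\Phi^k$-cases I also record the splitting index $i_j \in \{I,\dots,I+n_j-1\}$ and a preimage $P_j \in Q(c)$ with $Z_{n_j} = \Phi^k_{i_j,\, t_{I+n_j-1}}(P_j)$; in the error case $Z_{n_j} = A_{I+n_j}(P_j)$.

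Next I rule out every subcase in which the index blows up. In the error case the $(1,1)$-entry of $Z_{n_j}$ is $x_{I+n_j-1}(P_j) \ge c(I+n_j-1)^2$, uniformly in $P_j \in \overline{Q(c)}$, which contradicts the convergence (hence boundedness) of $\{Z_{n_j}\}$. In the $\Phi^1$-case the same argument on the $(1,1)$-entry $x_{i_j-1}(P_j)$ forces $\{i_j\}$ to stay bounded; in the $\Phi^2$-case the $(2,2)$-entry is $y_{i_j}(P_j) \ge i_j^{2(p-1)}$ and yields the same conclusion. A further extraction therefore gives $i_j \equiv i^* \ge I$ constant, and we are in one of the two $\Phi^k$-cases.

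Finally, by compactness of $\overline{Q(c)}$ I extract once more so that $P_j \to P^* \in \overline{Q(c)}$. The formulas \eqref{phi1}-\eqref{phi2}, together with Proposition~\ref{ansmag2}\eqref{a1}, show that $(t,P) \mapsto \Phi^k_{i^*,t}(P)$ extends to a jointly continuous map on $[0,1]\times \overline{Q(c)}$. Since $t_{I+n_j-1}\to 1$, passing to the limit yields
\[
Z \;=\; \lim_{j\to\infty} \Phi^k_{i^*,\, t_{I+n_j-1}}(P_j) \;=\; \Phi^k_{i^*,1}(P^*),
\]
which by \eqref{endpoints} equals $B_{i^*}(P^*)$ if $k=1$ and $D_{i^*}(P^*)$ if $k=2$; both lie in $K_p$ by \eqref{BE}-\eqref{CD}, proving \eqref{ZZ}.

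The only genuinely delicate point is the last step: the limit parameter $P^*$ may land on $\partial Q(c)$, so I need the maps $z_{i^*-1}$, $v_{i^*}$ entering the $(2,1)$-entries of $\Phi^k_{i^*,t}$ (which involve the infinite tail series $S_{i^*-1}\sum_{\ell \ge i^*} H_\ell/S_\ell$) to be continuous up to $\overline{Q(c)}$. This is exactly what the uniform convergence statement in Proposition~\ref{ansmag2}\eqref{a1} guarantees, and it is the nontrivial input to the argument; the rest is pigeonhole and compactness.
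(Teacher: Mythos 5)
Your proof is correct and follows essentially the same route as the paper's: rule out the error set $A_{I+n_j}(Q(c))$ and unbounded splitting indices via the divergence of $x_i$, $y_i$ uniformly on $\overline{Q(c)}$, pigeonhole down to a fixed type $k$ and fixed index, extract a convergent sequence of parameters in the compact set $\overline{Q(c)}$, and pass to the limit using the joint continuity of $(t,P)\mapsto\Phi^k_{i,t}(P)$, whose only nontrivial ingredient is the continuity of $z_i$ (hence of $v_i$) on $\overline{Q(c)}$ from Proposition~\ref{ansmag2}\eqref{a1}. You correctly identify that last point as the essential input; no gaps.
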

\begin{proof}
Denote by $B_R(0) \subset \R^{2\times 2}$ the ball of radius $R$ centered at $0 \in \R^{2\times 2}$. Since 
\[
\inf_{P \in \overline{Q(c)}}x_i(P) \to \infty \quad\text{ and }\quad \inf_{P \in \overline{Q(c)}}y_i(P) \to \infty
\]
as $i \to \infty$, we see that given any $R > 0$:
\begin{equation}\label{PHI}
A_i(Q(c))\cap B_R(0) = \emptyset, \quad \Phi^{k}_{i,t}(Q(c))\cap B_R(0) = \emptyset,
\end{equation}
for all $t \ge t_0, k =1,2, i \ge \bar I = \bar I(R) \ge 1$. Since $\{Z_{n_j}\}_j$ is convergent, it is bounded, and from $Z_{n_j} \in V_{n_j}, \forall j$, and \eqref{PHI}, we infer
\[
Z_{n_j} \in \bigcup_{i=I}^{N} U_{i,n_j}^1\cup\bigcup_{i=I}^{N} U_{i,n_j}^2 \subset V_{n_j}, \quad \forall j \ge J,
\]
for some large $J,N$, the latter independent of $j$. Now, either for $k=1$ or $k =2$, we can select a (non-relabeled) subsequence of $\{Z_{n_j}\}$ with the property that, for some $i_0 \in \{I,\dots, N\}$,
\[
Z_{n_j} \in U_{i_0,n_j}^k.
\]
Let $k =1$ for simplicity, otherwise the proof is analogous. By definition of $U_{i_0,n_j}^k$, we find a sequence $P_j = (a_j,x_0^j,y_0^j,w_j) \in Q(c)$ such that, for all $j \in \N$,
\[
Z_{n_j} = \Phi_{i_0,t_{n_j}}^1(P_j).
\]
Up to passing to a subsequence, there exists a limit $\lim_j P_j = P \in \overline{Q(c)}$. Now observe that the map $(t,a,x_0,y_0,w) \in ([0,1]\times\overline Q(c)) \mapsto \Phi_{i_0,t}^k(x_0,y_0,z_0,w)$ is continuous for $k=1,2$ on $\overline{Q(c)}$. This can be inferred from the definitions \eqref{phi1}-\eqref{phi2}, the continuity of $z_i$, proved in Propositions \ref{ansmag2}, and \eqref{vwi}. Continuity tells us that $Z = \Phi_{i_0,1}^1(P)$ and concludes the proof.
\end{proof}

\section{Properties of the laminates}\label{sec:proplam}
Aim of this section is to build the laminates that will be used in the proof of the inductive proposition of Section \ref{sec:ind}. In what follows, $c$ is a fixed parameter, that has been chosen in \eqref{incre}-\eqref{increless} and Proposition \ref{ansmag2}. Moreover,
\[
t_q = 1 -\frac{1-t_0}{2^q},
\]
for all $q \in \N$, for $t_0$ of Lemma \ref{sigma}. Finally, we denote by $I = I(c) \in \N$ the index for which the Proposition \ref{ansmag2} and Lemma \ref{sigma} hold, for all $i \ge I$. We will update this index a few more times in this section, and this will fix $I$ for the last section. We will denote by $[X,Y]$ the segment connecting $X,Y \in \R^{2\times 2}$, and we will call it a rank-one segment if $\det(X-Y) = 0$.

\begin{lemma}\label{costrone1}
Let $i \ge 1$, $q \ge 1$. For all $M \in Q(c)$, the matrix $\Phi_{i,t_q}^1(M)$ belongs to the rank-one segment $\left[\Phi_{i,t_{q+1}}^1(M),A_{i}(M)\right]$:
\[
\Phi_{i,t_q}^1(M) = \mu_{1,q}\Phi_{i,t_{q+1}}^1(M) + \mu_{2,q}A_{i}(M)
\]
with
\[
\mu_{1,q} =\frac{t_q}{t_{q + 1}}  \text{ and } \mu_{2,q} = 1-\frac{t_q}{t_{q + 1}}.
\]
\end{lemma}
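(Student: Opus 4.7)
The plan is essentially a direct verification that exploits the fact that, by construction, the map $t\mapsto \Phi^1_{i,t}(M)$ is \emph{affine} in $t$ along a fixed rank-one direction. There is nothing deep to prove here; the whole content is a bookkeeping identity built into the definition.

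\medskip

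\noindent\textbf{Step 1: rewrite $\Phi^1_{i,t}$ as an affine function of $t$.} Unwinding \eqref{phi1imp}, for every fixed $M\in Q(c)$ one has
\[
\Phi^1_{i,t}(M) \;=\; A_i(M) + t\,\lambda_{E_i}(M)\bigl(B_i(M)-E_i(M)\bigr),\qquad t\in[0,1],
\]
so $\Phi^1_{i,0}(M)=A_i(M)$ and the segment $\{\Phi^1_{i,t}(M):t\in[0,1]\}$ is parametrized linearly.

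\medskip

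\noindent\textbf{Step 2: solve for the convex combination.} From Step 1,
\[
\Phi^1_{i,t_{q+1}}(M) - A_i(M) \;=\; t_{q+1}\lambda_{E_i}(M)\bigl(B_i(M)-E_i(M)\bigr).
\]
Setting $s\doteq t_q/t_{q+1}$ and using the affine dependence again,
\[
\Phi^1_{i,t_q}(M) \;=\; A_i(M) + s\,t_{q+1}\lambda_{E_i}(M)\bigl(B_i(M)-E_i(M)\bigr)\;=\; s\,\Phi^1_{i,t_{q+1}}(M) + (1-s)A_i(M),
\]
so $\mu_{1,q}=s=t_q/t_{q+1}$ and $\mu_{2,q}=1-t_q/t_{q+1}$. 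Since $t_q = 1-(1-t_0)/2^q$ with $0<t_0<1$, we have $0<t_0\le t_q < t_{q+1} < 1$, hence both $\mu_{1,q},\mu_{2,q}\in(0,1)$ and the combination is a genuine, non-degenerate convex combination.

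\medskip

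\noindent\textbf{Step 3: rank-one verification.} It remains to confirm that $[\Phi^1_{i,t_{q+1}}(M),A_i(M)]$ is a rank-one segment. From the explicit formulas \eqref{A}--\eqref{BE}, the matrices $A_i(M)$, $B_i(M)$ and $E_i(M)$ all share the first row $(x_{i-1},w)$, so
\[
B_i(M)-E_i(M) \;=\; \begin{pmatrix}0 & 0\\ h_w(x_{i-1}) - v_i & -g_w(x_{i-1}) - y_i\end{pmatrix}
\]
has zero first row, and in particular $\det(B_i(M)-E_i(M))=0$. Since $\Phi^1_{i,t_{q+1}}(M)-A_i(M)$ is a scalar multiple of $B_i(M)-E_i(M)$, its determinant also vanishes, which is exactly the rank-one condition on the segment.

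\medskip

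\noindent\textbf{Main obstacle.} There is no real obstacle: the entire statement is a routine algebraic consequence of definitions. The only point to be careful about is that the positivity of both weights $\mu_{1,q},\mu_{2,q}$ relies crucially on the \emph{strict} inequality $t_q<t_{q+1}<1$ guaranteed by the specific choice $t_q = 1-(1-t_0)/2^q$; this strict monotonicity is what will make the resulting laminate genuinely ``advance'' towards $K_p$ as $q\to\infty$, a feature that will be exploited later via Lemma~\ref{conv}.
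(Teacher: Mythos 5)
Your proof is correct and follows essentially the same route as the paper: both start from the affine parametrization \eqref{phi1imp}, verify the convex-combination identity by direct algebra, and note that $\det(\Phi^1_{i,t_{q+1}}(M)-A_i(M))=0$ since it is a multiple of $B_i(M)-E_i(M)$. Your Step 3 spells out the zero first row of $B_i(M)-E_i(M)$ more explicitly, and you add the (true but not strictly needed) observation that the weights lie strictly in $(0,1)$; otherwise the arguments coincide.
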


\begin{proof}
By the definitions, we immediately see that $\det(\Phi_{i,t_{q+1}}^1(M) - A_i(M)) = 0$. Therefore the segment $\left[\Phi_{i,t_{q+1}}^1(M),A_{i}(M)\right]$ is a rank-one segment. Using \eqref{phi1imp}, we compute
\begin{align*}
\mu_{1,q}\Phi^1_{i,t_{q+1}}(M) &+ \mu_{2,q}A_i(M) \\
&= \frac{t_q}{t_{q+1}}(A_i(M) + t_{q+1}\lambda_{E_i(M)}(B_i(M) - E_i(M))) + \frac{t_{q+1}-t_{q}}{t_{q + 1}} A_i(M)\\
& = A_i(M) + t_{q}\lambda_{E_i(M)}(B_i(M) - E_i(M)) = \Phi^1_{i,t_q}(M).
\end{align*}
\end{proof}

\begin{lemma}\label{costrone3}
Let $i \ge 1$, $q \ge 1$. For all $M \in Q(c)$, the matrix $\Phi_{i,t_q}^2(M)$ belongs to the rank-one segment $\left[\Phi_{i,t_{q+1}}^2(M),C_{i}(M)\right]$:
\begin{equation}\label{eq:eq}
\Phi_{i,t_{q}}^2(M) = \mu_{i,q}^1(M)\Phi_{i,t_{q + 1}}^2(M) +  \mu_{i,q}^2(M)C_{i}(M)
\end{equation}
with
\[
\mu_{i,q}^1(M) =\frac{\lambda_{D_i(M)} +t_q\lambda_{C_i(M)}}{\lambda_{D_i(M)} +t_{q+1}\lambda_{C_i(M)}}  \text{ and } \mu_{i,q}^2(M) = \frac{(t_{q+ 1}-t_q)\lambda_{C_i(M)}}{\lambda_{D_i(M)} +t_{q+1}\lambda_{C_i(M)}}.
\]
Furthermore, if $I$ is sufficiently large, there exists a dimensional constant $C> 0$ such that for all $q \ge I$, $i \ge 1$ and $M \in Q(c)$,
\begin{equation}\label{estcostrone}
1 - \frac{C}{2^q} \le \mu_{i,q}^1(M)<1 \text{ and }  0 < \mu_{i,q}^2(M) \le \frac{C}{2^q}.
\end{equation}
\end{lemma}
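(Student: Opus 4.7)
The plan is to simply recast the definition \eqref{phi2new} of $\Phi^2_{i,t}$ in a form that makes the segment $[\Phi^2_{i,t_{q+1}}(M), C_i(M)]$ transparent, and then read off the coefficients $\mu_{i,q}^{1,2}$ by solving a one-dimensional linear equation. Writing $P=M$ and using the barycentric identity $E_i = \lambda_{C_i} C_i + \lambda_{D_i} D_i$, together with $\lambda_{C_i}+\lambda_{D_i}=1$, gives $E_i - C_i = \lambda_{D_i}(D_i - C_i)$, so \eqref{phi2new} rewrites as
\[
\Phi^2_{i,t}(M) \;=\; C_i(M) + \bigl(\lambda_{D_i}(M) + t\,\lambda_{C_i}(M)\bigr)\bigl(D_i(M) - C_i(M)\bigr).
\]
Thus the whole curve $t \mapsto \Phi^2_{i,t}(M)$ lies on the line through $C_i(M)$ in direction $D_i(M)-C_i(M)$, and this direction is rank-one because $D_i$ and $C_i$ were obtained from $E_i$ by splitting along the rank-one direction $\begin{pmatrix} m' & 0\\ n' & 0\end{pmatrix}$ in Section \ref{sec:def}.

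From this representation, the identity \eqref{eq:eq} becomes the one-dimensional relation $\lambda_{D_i}+t_q\lambda_{C_i} = \mu_{i,q}^1(\lambda_{D_i}+t_{q+1}\lambda_{C_i})$ together with $\mu_{i,q}^1+\mu_{i,q}^2=1$. Solving yields exactly
\[
\mu_{i,q}^1(M) = \frac{\lambda_{D_i}(M)+t_q\lambda_{C_i}(M)}{\lambda_{D_i}(M)+t_{q+1}\lambda_{C_i}(M)}, \qquad \mu_{i,q}^2(M) = \frac{(t_{q+1}-t_q)\lambda_{C_i}(M)}{\lambda_{D_i}(M)+t_{q+1}\lambda_{C_i}(M)},
\]
and since $t_q<t_{q+1}<1$ and $\lambda_{C_i},\lambda_{D_i}>0$, both coefficients are automatically in $(0,1)$ with $\mu_{i,q}^1<1$.

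For the quantitative bounds \eqref{estcostrone}, I would use $t_{q+1}-t_q = (1-t_0)/2^{q+1}$ together with the trivial estimate $\lambda_{C_i}\le \lambda_{C_i}+\lambda_{D_i}=1$ in the numerator. For the denominator, since $t_{q+1}\le 1$, one has
\[
\lambda_{D_i}(M)+t_{q+1}\lambda_{C_i}(M) \;\ge\; t_{q+1}\bigl(\lambda_{D_i}(M)+\lambda_{C_i}(M)\bigr)\;=\;t_{q+1}\;\ge\;t_0,
\]
which gives $\mu_{i,q}^2(M) \le (1-t_0)/(2^{q+1}t_0) \le C/2^q$ for $C \doteq (1-t_0)/t_0$, and then $\mu_{i,q}^1(M) = 1-\mu_{i,q}^2(M) \ge 1 - C/2^q$ is automatic. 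Note that this argument is actually valid for every $q\ge 1$, and the assumption $q\ge I$ in the statement is only needed to keep the index conventions consistent with Lemma \ref{sigma} and Proposition \ref{ansmag2}. There is no real obstacle here; the only point that required thought is the rewriting in the first paragraph, which converts the interpolation formula into a linear parametrization based at $C_i$ rather than at $E_i$.
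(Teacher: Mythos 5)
Your proof is correct and follows essentially the same route as the paper: a direct verification that $\Phi^2_{i,t_q}(M)$ is the stated convex combination (your reparametrization $\Phi^2_{i,t}=C_i+(\lambda_{D_i}+t\lambda_{C_i})(D_i-C_i)$ is just a cleaner way to organize that computation), the rank-one property read off from the construction of $C_i,D_i$, and the bound $\mu^2_{i,q}\le (t_{q+1}-t_q)/t_{q+1}\cdot(\text{const})$ combined with $\mu^1_{i,q}+\mu^2_{i,q}=1$. Your observation that bounding the denominator below by $t_0$ makes the estimate valid for all $q\ge 1$, rather than only $q\ge I$, is a correct minor sharpening of the paper's argument.
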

\begin{proof}
All the assertions can be checked by direct computation. First, using the definitions, we see that $\det(\Phi^2_{i,t_{q + 1}}(M) - C_{i}(M)) = 0$. Therefore, the segment $\left[\Phi_{i,t_{q+1}}^2(M),C_{i}(M)\right]$ is a rank-one segment. Moreover, using \eqref{phi2new}, one directly checks \eqref{eq:eq}. We only need to show \eqref{estcostrone}. The first estimate follows from the second and the fact that $\mu_{i,q}^1 + \mu_{i,q}^2 = 1$. Recall that $t_q = 1 -\frac{1-t_0}{2^q} < 1$. To show the second estimate of \eqref{estcostrone}, we simply write
\[
0 < \mu_{i,q}^2(M) = (t_{q+ 1}-t_q)\frac{\lambda_{C_i(M)}}{\lambda_{D_i(M)} +t_{q+1}\lambda_{C_i(M)}} \le  \frac{t_{q+ 1}-t_q}{t_{q+1}} = \frac{1-t_0}{t_{q + 1}2^{q + 1}}.
\]
Since $\lim_{q \to \infty}t_q = 1$, we conclude the validity of \eqref{estcostrone}.
\end{proof}

\begin{prop}\label{costrlam}
Let $q \ge j_2-2, j_2 > j_1 \ge 2 $. Let moreover $P \in Q(c)$. Then, $A_{j_1}(P)$ is the barycenter of a laminate of finite order $\nu_{j_1,j_2,q}(P)$ with 
\begin{equation}\label{supp}
\spt(\nu_{j_1,j_2,q}(P)) \subset \bigcup_{k = j_1}^{j_2 -1}U_{k,q}^1\cup \bigcup_{k = j_1}^{j_2-1}U_{k,q}^2\cup U^3_{j_2}.
\end{equation}
Furthermore, 
\begin{equation}\label{lam:positive}
0 <\nu_{j_1,j_2,q}(P)(U^3_{j_2})
\end{equation}
\end{prop}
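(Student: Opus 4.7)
The plan is to construct $\nu_{j_1,j_2,q}(P)$ by iterating a ``double splitting'' at each level $k = j_1, j_1+1, \ldots, j_2-1$: at the start of step $k$ the current measure carries a positive mass at $A_k(P)$, and I perform two elementary splittings in the sense of Definition~\ref{els} which replace that Dirac by positive Diracs at $\Phi^1_{k,t_q}(P)$, $\Phi^2_{k,t_q}(P)$, and $A_{k+1}(P)=C_k(P)$. After $j_2-j_1$ iterations the support is therefore exactly $\{\Phi^1_{k,t_q}(P),\Phi^2_{k,t_q}(P) : j_1 \le k \le j_2-1\} \cup \{A_{j_2}(P)\}$, which by definition of $U^k_{i,q}$ and $U^3_i$ is contained in the union appearing in \eqref{supp}.

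For the first splitting at step $k$ I would use the rank-one direction $v := B_k(P) - E_k(P)$, which is rank one since by \eqref{BE} the matrices $B_k$ and $E_k$ share their first column. Parametrising the line $\{E_k + s v : s \in \R\}$, one checks from \eqref{BE}, \eqref{landa1} and \eqref{phi1imp} that $E_k$, $A_k$, $\Phi^1_{k,t_q}(P)$, $B_k$ sit at $s = 0,\ \lambda_{B_k},\ \lambda_{B_k}+t_q\lambda_{E_k},\ 1$ respectively. Since $\lambda_{B_k}, \lambda_{E_k}, t_q \in (0,1)$ one has $0 < \lambda_{B_k} < \lambda_{B_k}+t_q\lambda_{E_k}$, so $A_k(P)$ is a proper convex combination of $E_k(P)$ and $\Phi^1_{k,t_q}(P)$ and the splitting is admissible. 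An entirely parallel computation using \eqref{CD}, \eqref{landa2}, \eqref{phi2new} along the rank-one direction $D_k(P)-C_k(P)$ places $E_k(P)$ strictly between $C_k(P) = A_{k+1}(P)$ and $\Phi^2_{k,t_q}(P)$, giving the second admissible splitting. Taking the ambient open set $U$ in Definition~\ref{els} to be any large enough open subset of $\R^{2\times 2}$ containing the finitely many matrices and rank-one segments involved makes $\nu_{j_1,j_2,q}(P)$ a bona fide laminate of finite order with barycenter $A_{j_1}(P)$.

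For \eqref{lam:positive}, the mass $\nu_{j_1,j_2,q}(P)(\{A_{j_2}(P)\})$ equals the product over $k = j_1,\ldots,j_2-1$ of the two strictly positive coefficients produced at step $k$, hence is strictly positive. The hypothesis $q \ge j_2-2$ (together with $j_1 \ge I$, which is implicit from the preceding text) places us in the regime where Lemma~\ref{sigma} applies, so the sets $U^1_{k,q}$, $U^2_{k,q}$ ($j_1 \le k \le j_2-1$) and $U^3_{j_2}$ are pairwise disjoint; consequently $\nu_{j_1,j_2,q}(U^3_{j_2})$ equals the positive mass at $A_{j_2}(P)$, giving \eqref{lam:positive}. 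The only mildly delicate point is verifying this disjointness through Lemma~\ref{sigma}; once that is secured, the splitting bookkeeping is routine in view of Lemmas~\ref{costrone1}--\ref{costrone3}.
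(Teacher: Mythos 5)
Your proposal is correct and follows essentially the same route as the paper: the "double splitting" you describe at each level $k$ is exactly the content of Lemma~\ref{base} (with the same weights \eqref{w1}--\eqref{w3}), and the iteration over $k=j_1,\dots,j_2-1$ replacing the Dirac at $A_{k+1}(P)$ is the paper's inductive definition of $\mu^k$ in \eqref{def:muk}, with positivity of $\nu_{j_1,j_2,q}(P)(U^3_{j_2})$ obtained the same way from the product of the strictly positive weights $\lambda^3_{k,q}$.
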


The proof of the previous proposition is inductive. We record the base case separately in the following

\begin{lemma}\label{base}
Let $i \ge I + 1$, $q \ge I$, and $P \in Q(c)$. Then, $A_{i}(P)$ is the barycenter of the laminate of finite order
\[
\nu_{i,q}(P) \doteq \lambda_{i,q}^1(P)\delta_{\Phi^1_{i,t_q}(P)} + \lambda_{i,q}^2(P)\delta_{\Phi^2_{i,t_q}(P)} + \lambda_{i,q}^3(P)\delta_{A_{i + 1}(P)},
\]
where
\begin{align}
\label{w1}\lambda_{i,q}^1(P) &= \frac{\lambda_{B_i(P)}}{\lambda_{B_i(P)} + t_q\lambda_{E_i(P)}},\\
\label{w2}\lambda_{i,q}^2(P) &= \frac{t_q\lambda_{E_i(P)}}{\lambda_{B_i(P)} + t_q\lambda_{E_i(P)}}\frac{\lambda_{D_i(P)}}{t_q\lambda_{C_i(P)} + \lambda_{D_i(P)}},\\
\label{w3}\lambda_{i,q}^3(P) &= \frac{t_q\lambda_{E_i(P)}}{\lambda_{B_i(P)} + t_q\lambda_{E_i(P)}}\frac{t_q\lambda_{C_i(P)}}{t_q\lambda_{C_i(P)} + \lambda_{D_i(P)}}.
\end{align}
Furthermore, independently of $P$, if $I$ is large enough, there exist positive constants $C$, $k_1 < k_2$ such that
\begin{align}
\label{w1def>2}\frac{k_1}{i} &\le\lambda_{i,q}^1(P) \le \frac{k_2}{i},\\
\label{w2def>2}\frac{k_1}{i} &\le \lambda_{i,q}^2(P)  \le \frac{k_2}{i},\\
\label{w3def>2}0&< \lambda_{i,q}^3(P)  \le e^{\frac{C}{i^\gamma}}e^{-2\frac{\min\{G_p(c),G_p(2c)\}}{i}},
\end{align}
where $\gamma = 1 + \min\{1,2(p-1)\}$.
\end{lemma}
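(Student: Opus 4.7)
The plan is to split the proof into two parts: first, constructing the laminate via two elementary splittings in the sense of Definition \ref{els}; second, deriving the quantitative weight estimates.

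For the first part, I would proceed by two successive elementary splittings. Starting from $\delta_{A_i(P)}$, split along the rank-one direction $B_i-E_i$ (which is rank-one by construction of $B_i,E_i$): the identity $A_i = \lambda_{B_i}B_i + \lambda_{E_i}E_i$ combined with $\Phi^1_{i,t_q} = A_i + t_q\lambda_{E_i}(B_i - E_i)$ shows that $\Phi^1_{i,t_q}$ lies on the segment $[A_i,B_i]$ and a direct algebraic manipulation yields
\[
A_i = \frac{\lambda_{B_i}}{\lambda_{B_i}+t_q\lambda_{E_i}}\,\Phi^1_{i,t_q} + \frac{t_q\lambda_{E_i}}{\lambda_{B_i}+t_q\lambda_{E_i}}\,E_i,
\]
producing the weight $\lambda_{i,q}^1$ of \eqref{w1}. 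Then apply an analogous elementary splitting to $\delta_{E_i}$ along the rank-one direction $D_i-C_i$, using $E_i = \lambda_{C_i}C_i + \lambda_{D_i}D_i$ and $\Phi^2_{i,t_q} = E_i + t_q\lambda_{C_i}(D_i - C_i)$, to express $E_i$ as a convex combination of $\Phi^2_{i,t_q}$ and $C_i=A_{i+1}$. Composing both splittings, as permitted by Definition \ref{els}, yields the announced laminate with weights \eqref{w1}--\eqref{w3}.

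For the bounds \eqref{w1def>2}--\eqref{w2def>2}, the key observation is that $\lambda_{B_i}+t_q\lambda_{E_i} = t_q + (1-t_q)\lambda_{B_i} \in [t_q,1] \subset [t_0,1]$, and similarly $\lambda_{D_i}+t_q\lambda_{C_i} \in [t_0,1]$. Thus the estimates reduce to the uniform asymptotics
\[
\lambda_{B_i}(P) \sim \frac{2(p-1)}{(1+a^{p-1})\,i},\qquad \lambda_{D_i}(P) \sim \frac{2a}{(1+a)\,i},
\]
which follow from $y_i - y_{i-1}\sim 2(p-1)i^{2(p-1)-1}$, $x_i-x_{i-1}\sim 2ai$, $g_w(x_{i-1})\sim a^{p-1}i^{2(p-1)}$ and $g_w^{-1}(y_i)\sim i^2$ (the latter proved in \eqref{invg}). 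Since $a\in[c,2c]$, this produces two-sided bounds of the form $k_1/i \le \lambda_{i,q}^j \le k_2/i$ for $j=1,2$, once $I$ is large enough.

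For \eqref{w3def>2} I would first use the elementary inequality
\[
\lambda_{i,q}^3 = \frac{t_q^2\,\lambda_{E_i}\lambda_{C_i}}{\bigl[t_q+(1-t_q)\lambda_{B_i}\bigr]\bigl[t_q+(1-t_q)\lambda_{D_i}\bigr]} \le \lambda_{E_i}\lambda_{C_i},
\]
reducing the problem to a precise upper bound on $\lambda_{E_i}\lambda_{C_i} = (1-\lambda_{B_i})(1-\lambda_{D_i})$. Taking logarithms and Taylor-expanding $\log(1-x) = -x + O(x^2)$, the bound follows from sharpened asymptotic expansions
\[
\lambda_{B_i}(P) = \frac{2(p-1)}{(1+a^{p-1})\,i} + O(i^{-\gamma}),\qquad \lambda_{D_i}(P) = \frac{2a}{(1+a)\,i} + O(i^{-\gamma}),
\]
with $\gamma = 1+\min\{1,2(p-1)\}$, yielding $\log(\lambda_{E_i}\lambda_{C_i}) = -\tfrac{2}{i}G_p(a) + O(i^{-\gamma})$. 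Finally, by \eqref{incre}--\eqref{increless} the continuous function $G_p$ is monotone on $[c,2c]$ in both the regimes $p>2$ and $1<p<2$, so $G_p(a)\ge\min\{G_p(c),G_p(2c)\}$ uniformly on $\overline{Q(c)}$, producing \eqref{w3def>2}.

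The principal technical hurdle is the sharpened expansion with remainder $O(i^{-\gamma})$, which drives the constant $G_p(a)$ in the exponential rate. This step is structurally analogous to the estimate \eqref{eqn:claim-main} already worked out in Section \ref{compder}: one has to compare $x_{i-1}$ with $g_w^{-1}(y_i)$ and $g_w(x_{i-1})$ with $y_i$ to higher order, using \eqref{diffgw} and the $C^1$ dependence of $g_w^{-1}$ on $w$, together with the case-split $p\gtrless 2$ for the behaviour of $g_0^{-1}(y_k)-k^2$. Once this refined expansion is in place, everything else is routine manipulation and the choice of $I=I(c)$ large enough to absorb the error terms into the prefactor $e^{C/i^\gamma}$.
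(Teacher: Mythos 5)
Your proposal is correct and follows essentially the same route as the paper: two successive elementary splittings along the directions $B_i-E_i$ and $D_i-C_i$ give the weights \eqref{w1}--\eqref{w3}, the two-sided bounds reduce to $\lambda_{B_i}\sim \frac{2(p-1)}{(1+a^{p-1})i}$ and $\lambda_{D_i}\sim\frac{2a}{(1+a)i}$ after noting the denominators lie in $[t_q,1]$, and \eqref{w3def>2} reduces to $\lambda^3_{i,q}\le\lambda_{E_i}\lambda_{C_i}$ followed by the refined estimate \eqref{eqn:claim-main}. The only cosmetic difference is that you Taylor-expand $\log(1-x)$ where the paper applies concavity of the logarithm directly to the quotients (inequality \eqref{log1}); both hinge on the same key expansion with remainder $O(i^{-\gamma})$.
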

\begin{proof}
From $\delta_{A_i(P)}$ we pass to $\nu_{i,q}(P)$ via two consequent elementary splittings, in the sense of Definition \ref{sec:def}. These splitting were already mentioned when introducing the maps under consideration, see Remark \ref{lof:BDC}. First, we split $A_i(P)$ in a rank-one segment with direction $B_i(P) - E_i(P)$. The segment has endpoints $E_i(P)$ and $\Phi^1_{i,t_q}(P)$. Then, we split again $\delta_{E_i(P)}$ into a rank-one segment with direction $C_i(P)-D_i(P)$, with endpoints $C_i(P) = A_{i + 1}(P)$ and $\Phi^2_{i,t_q}(P)$. Weights \eqref{w1}-\eqref{w2}-\eqref{w3} are obtained via direct computation. We now turn to the proof of \eqref{w1def>2}-\eqref{w2def>2}-\eqref{w3def>2}. First, we notice that, as $t_q = 1 - \frac{1-t_0}{2^q}$, $t_q \to 1$ as $q \to \infty$. Therefore, if $I$ is sufficiently large,
\[
\frac{1}{2} \le t_q \le 1.
\]
Furthermore, as $\lambda_{E_i(P)}+\lambda_{B_i(P)}=1$, it follows that
\begin{equation}\label{convsum1}
1 \ge \lambda_{B_i(P)} + t_q\lambda_{E_i(P)} \ge t_q \ge \frac{1}{2}.
\end{equation}
Analogously, we have
\begin{equation}\label{convsum2}
1 \ge \lambda_{D_i(P)} + t_q\lambda_{C_i(P)} \ge t_q \ge \frac{1}{2}.
\end{equation}
Combining these estimates, we find the first bounds
\begin{align*}
&k_1\lambda_{B_i(P)} \le \lambda_{i,q}^1(P) \le k_2\lambda_{B_i(P)},\\
&k_1\lambda_{E_i(P)}\lambda_{D_i(P)} \le \lambda_{i,q}^2(P) \le k_2\lambda_{E_i(P)}\lambda_{D_i(P)}.
\end{align*}
Using the notation of Section \ref{sec:def}, it is easy to see that
\[
\lambda_{B_i(P)} \sim \frac{2(p-1)}{1 + a^{p-1}}\frac{1}{i},\quad \lambda_{E_i(P)}\lambda_{D_i(P)} \sim \frac{a}{a +1}\frac{1}{i},
\]
whence \eqref{w1def>2}-\eqref{w2def>2} follow.  We now move to the proof of \eqref{w3def>2}. The bound from below is immediate. Moreover, by \eqref{convsum1}-\eqref{convsum2}, we find that
\[
0 < \lambda_{i,q}^3(P) \le \lambda_{E_i(P)}\lambda_{C_i(P)} = \frac{y_{i-1} + g_{w}(x_{i-1})}{y_i + g_{w}(x_{i-1})}\frac{x_{i-1}+g^{-1}_w(y_i)}{x_{i}+g^{-1}_w(y_i)}.
\]
We need to show that, for $\gamma = 1 + \min\{1,2(p-1)\}$, some $C > 0$ and large $i$,
\[
\frac{y_{i-1} + g_{w}(x_{i-1})}{y_i + g_{w}(x_{i-1})}\frac{x_{i-1}+g^{-1}_w(y_i)}{x_{i}+g^{-1}_w(y_i)} \le e^\frac{C}{i^\gamma}e^{-2 \min\{G_p(c),G_p(2c)\}\frac{1}{i}}.
\]
This was already proved in Section \ref{ASYSIL}, and we sketch the estimate for the convenience of the reader. We only treat the estimate of $\lambda_{C_i(P)}$, the proof of the other term being analogous. Write
\[
\frac{x_{i-1}+g^{-1}_w(y_i)}{x_{i}+g^{-1}_w(y_i)} = e^{\ln(x_{i-1}+g^{-1}_w(y_i))- \ln(x_{i}+g^{-1}_w(y_i)}.
\]
Now use \eqref{log1} to estimate
\[
\ln(x_{i-1}+g^{-1}_w(y_i))- \ln(x_{i}+g^{-1}_w(y_i)) \le \frac{x_{i-1} - x_i}{x_{i} + g_w^{-1}(y_i)}.
\]
Estimate \eqref{eqn:claim-main} 
 gives us
\[
e^{\ln(x_{i-1}+g^{-1}_w(y_i))- \ln(x_{i}+g^{-1}_w(y_i))} \le e^\frac{C}{i^\gamma}e^{-\frac{2 a}{a +1}\frac{1}{i}}.
\]
A similar proof yields
\[
 \frac{y_{i-1} + g_{w}(x_{i-1})}{y_i + g_{w}(x_{i-1})} \le e^\frac{C}{i^\gamma}e^{-\frac{2(p-1) }{1 + a^{p-1}}\frac{1}{i}}.
\]
Combining the last two inequalities, we find, for all $P \in \overline{Q(c)}$
\[
0 < \lambda_{i,q}^3(P) \le e^\frac{C}{i^\gamma}e^{-2 G_p(a)\frac{1}{i}} \overset{\eqref{incre}-\eqref{increless}}{\le} e^\frac{C}{i^\gamma}e^{-2 \min\{G_p(c),G_p(2c)\}\frac{1}{i}}
\]
\end{proof}

We can now prove Proposition \ref{costrlam}.

\begin{proof}[Proof of Proposition \ref{costrlam}.]
The construction is inductive. If $j_2 = j_1 + 1$, then we set 
\[
\nu_{j_1,j_2,q}(P) \doteq \nu_{j_1,q}(P),
\]
where $\nu_{j_1,q}$ was introduced in Lemma \ref{base}. If $j_2 > j_1 + 1$, then $\nu_{j_1,j_2,q}(P)$ is obtained via $j_2 - j_1 \ge 2$ steps. First, $\mu^{1} \doteq \nu_{j_1,q}(P)$. By definition, $\mu^{1}$ contains a Dirac's delta at $C_{j_1}(P) = A_{j_1 + 1}(P)$ with weight $\lambda^3_{j_1,q}(P)$. Now $A_{j_1 + 1}(P)$ is again the barycenter of $\nu_{j_1 + 1,q}(P)$ of Lemma \ref{base}. Hence we set
\[
\mu^2 \doteq \mu^1 - \lambda^3_{j_1,q}\delta_{A_{j_1 + 1}(P)} + \lambda^3_{j_1,q}\nu_{j_1 + 1,q}(P).
\]
If $j_2 - j_1 = 2$, we stop. Otherwise, we continue iteratively, defining for $k \in \{2,\dots, j_2 - j_1\}$
\begin{equation}\label{def:muk}
\mu^k \doteq \mu^{k - 1} - \left(\prod_{r = 0}^{k-2}\lambda^3_{j_1+ r,q}(P)\right)\delta_{A_{j_1 + k - 1}(P)}+ \left(\prod_{r = 0}^{k -2}\lambda^3_{j_1+ r,q}(P)\right)\nu_{j_1 +k - 1,q}(P).
\end{equation}
Finally, the required laminate is $\nu_{j_1,j_2,q}(P) \doteq \mu^{j_2 - j_1}$. By construction, $\nu_{j_1,j_2,q}(P)$ is a laminate of finite order. The fact that the barycenter of $\nu_{j_1,j_2,q}(P)$ is $A_{j_1}(P)$ and \eqref{supp} also follow by construction. By Lemma \ref{sigma}, all the sets $U_{k,q}^1,U_{k,q}^2$ and  $U^3_{j_2}$, for $k \in \{j_1,\dots,j_2-1\}$ are pairwise disjoint. Therefore,
\[
\nu_{j_1,j_2,q}(P)(U^3_{j_2}) =  \left(\prod_{r = 0}^{j_2-j_1 -2}\lambda^3_{j_1+ r,q}(P)\right)\nu_{j_2 - 1,q}(P)(U^3_{j_2}) \overset{\eqref{w3def>2}}{>} 0
\]
\end{proof}

We combine Lemmas \ref{costrone1}-\ref{costrone3} and Proposition \ref{costrlam} to prove the last two Propositions of this section.

\begin{prop}\label{comb1}
Let $q \ge j_2-2, j_2 > j_1$. There exists a sufficiently large $I \in \N$ such that if $j_1 \ge I$, then, there exists a universal constant $C > 0$ such that the following hold. For all $P \in Q(c)$, $\Phi^1_{j_1,t_{q}}(P)$ is the barycenter of a laminate of finite order $\nu^1_{j_1,j_2,q}(P)$ with
\[
\spt(\nu^1_{j_1,j_2,q}(P)) \subset \bigcup_{i = j_1}^{j_2-1}U_{i,q + 1}^1\cup \bigcup_{i = j_1}^{j_2-1}U_{i,q + 1}^2 \cup U_{j_2}^3
\]
and the following estimates hold:
\begin{align}
\label{mixed11}\nu^1_{j_1,j_2,q}(P)(U^1_{j_1,q + 1}) &\ge 1 - \frac{C}{2^{j_2}},\\
\label{mixed21}\nu^1_{j_1,j_2,q}(P)\left( \bigcup_{i = j_1 + 1}^{j_2-1}U_{i,q + 1}^1\cup \bigcup_{i = j_1}^{j_2-1}U_{i,q + 1}^2 \cup U_{j_2}^3\right) &\le \frac{C}{2^{j_2}},\\
\label{mixed31}\nu^1_{j_1,j_2,q}(P)(U_{j_2}^3) &> 0.
\end{align}
\end{prop}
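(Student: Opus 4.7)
The plan is to combine Lemma \ref{costrone1} with Proposition \ref{costrlam} in one two-stage construction. First I would invoke Lemma \ref{costrone1} with $i = j_1$ and index $q$ to write
\[
\Phi^1_{j_1,t_q}(P) = \mu_{1,q}\,\Phi^1_{j_1,t_{q+1}}(P) + \mu_{2,q}\,A_{j_1}(P),
\]
which is an elementary splitting along the rank-one segment $[\Phi^1_{j_1,t_{q+1}}(P),A_{j_1}(P)]$. Then I would apply Proposition \ref{costrlam}, with the inner parameter $q$ replaced by $q+1$, to $A_{j_1}(P)$: since $q\ge j_2-2$ implies $q+1 \ge j_2 - 2$, this yields a laminate of finite order $\nu_{j_1,j_2,q+1}(P)$ whose barycenter is $A_{j_1}(P)$, whose support lies in $\bigcup_{k=j_1}^{j_2-1} U^1_{k,q+1}\cup \bigcup_{k=j_1}^{j_2-1} U^2_{k,q+1}\cup U^3_{j_2}$, and which satisfies $\nu_{j_1,j_2,q+1}(P)(U^3_{j_2})>0$.

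Next I would concatenate: define
\[
\nu^1_{j_1,j_2,q}(P) \doteq \mu_{1,q}\,\delta_{\Phi^1_{j_1,t_{q+1}}(P)} + \mu_{2,q}\,\nu_{j_1,j_2,q+1}(P).
\]
By construction this is a laminate of finite order (concatenation of the initial elementary splitting with the finite sequence of elementary splittings producing $\nu_{j_1,j_2,q+1}(P)$), its barycenter is $\Phi^1_{j_1,t_q}(P)$, and its support is contained in the union $\bigcup_{i=j_1}^{j_2-1} U^1_{i,q+1}\cup \bigcup_{i=j_1}^{j_2-1} U^2_{i,q+1}\cup U^3_{j_2}$, since the Dirac sits in $U^1_{j_1,q+1}$ and the support of $\nu_{j_1,j_2,q+1}(P)$ is contained in the same union.

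For the quantitative estimates, I would use the explicit formula from Lemma \ref{costrone1}:
\[
\mu_{2,q} = 1-\frac{t_q}{t_{q+1}} = \frac{t_{q+1}-t_q}{t_{q+1}} = \frac{1-t_0}{t_{q+1}\,2^{q+1}}.
\]
Because $t_q\to 1$, for $I$ (hence $q$) large enough we have $t_{q+1}\ge 1/2$, and since the hypothesis $q\ge j_2-2$ gives $2^{q+1}\ge 2^{j_2-1}$, we obtain $\mu_{2,q}\le C\, 2^{-j_2}$ for a universal $C>0$. Consequently $\mu_{1,q}\ge 1-C\,2^{-j_2}$, and since the mass of $\nu^1_{j_1,j_2,q}(P)$ on $U^1_{j_1,q+1}$ is at least $\mu_{1,q}$ (the first Dirac alone contributes $\mu_{1,q}$), estimate \eqref{mixed11} follows; \eqref{mixed21} is then just the complementary estimate. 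Finally,
\[
\nu^1_{j_1,j_2,q}(P)(U^3_{j_2})\ge \mu_{2,q}\,\nu_{j_1,j_2,q+1}(P)(U^3_{j_2}) > 0
\]
by \eqref{lam:positive} applied to $\nu_{j_1,j_2,q+1}(P)$, giving \eqref{mixed31}.

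The argument is essentially bookkeeping on top of the two previously established tools; there is no real obstacle. The one point that requires attention is to make sure the disjointness used implicitly—specifically, that the atoms of $\nu_{j_1,j_2,q+1}(P)$ landing in the various $U^k_{i,q+1}$ do not spill into $U^1_{j_1,q+1}$ and spoil the lower bound on $\mu_{1,q}$—is not actually needed: even discarding the contribution of $\nu_{j_1,j_2,q+1}(P)$ to $U^1_{j_1,q+1}$, the mass from the Dirac alone already yields $\mu_{1,q}\ge 1-C\,2^{-j_2}$, which is what we want.
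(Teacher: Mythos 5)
Your construction is exactly the paper's: the same concatenation $\nu^1_{j_1,j_2,q}(P) = \tfrac{t_q}{t_{q+1}}\delta_{\Phi^1_{j_1,t_{q+1}}(P)} + \bigl(1-\tfrac{t_q}{t_{q+1}}\bigr)\nu_{j_1,j_2,q+1}(P)$ obtained from Lemma \ref{costrone1} and Proposition \ref{costrlam}, with the same bound $1-\tfrac{t_q}{t_{q+1}} \le C2^{-j_2}$ from $q\ge j_2-2$, and \eqref{mixed31} from \eqref{lam:positive}. The argument is correct; the only point worth making explicit is that the complementary estimate \eqref{mixed21} does use the pairwise disjointness of the sets $U^k_{i,q+1}$, $U^3_{j_2}$ from Lemma \ref{sigma} to ensure the Dirac atom does not lie in the union being estimated.
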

\begin{proof}
Combining Lemma \ref{costrone1} with Proposition \ref{costrlam}, we define the laminate of finite order
\[
\nu^1_{j_1,j_2,q}(P) = \frac{t_q}{t_{q + 1}}\delta_{\Phi^1_{j_1,t_{q + 1}}(P)} + \left(1 - \frac{t_q}{t_{q + 1}}\right)\nu_{j_1,j_2,q+1}(P).
\]
We check as in Lemma \ref{costrone1} that $\Phi^1_{j_1,t_{q}}(P)$ is the barycenter of $\nu^1_{j_1,j_2,q}(P)$. It is immediate to see that
\[
\spt(\nu^1_{j_1,j_2,q}(P)) \subset \bigcup_{i = j_1}^{j_2-1}U_{i,t_{q + 1}}^1\cup \bigcup_{i = j_1}^{j_2-1}U_{i,t_{q + 1}}^2 \cup U_{j_2}^3.
\]
Recall that the union is disjoint if $I$ is sufficiently large by Lemma \ref{sigma}. We now come to the required estimates. First,
\[
\nu^1_{j_1,j_2,q}(P)(U^1_{j_1,q + 1}) =  \frac{t_q}{t_{q + 1}} + \left(1 - \frac{t_q}{t_{q + 1}}\right)\nu_{j_1,j_2,q+1}(P)(U^1_{j_1,q + 1}) \ge \frac{t_q}{t_{q + 1}}.
\]
Since $q \ge j_2-2$ and $t_q = 1 - \frac{1-t_0}{2^q}$, we find a constant $C >0$ such that
\[
\frac{t_q}{t_{q + 1}} \ge 1 - \frac{C}{2^{j_2}}.
\]
It also follows that
\[
\nu^1_{j_1,j_2,q}(P)\left( \bigcup_{i = j_1 + 1}^{j_2-1}U_{j_1,q + 1}^1\cup \bigcup_{i = j_1}^{j_2-1}U_{j_1,q + 1}^2 \cup U_{j_2}^3\right) \le \frac{C}{2^{j_2}}.
\]
Finally, \eqref{mixed31} follows from \eqref{lam:positive}. This finishes the proof.
\end{proof}
\begin{prop}\label{comb2}
Let $q \ge j_2-2, j_2 > j_1+1$. There exists a sufficiently large $I \in \N$ such that if $j_1 \ge I$, then, there exists a universal constant $C > 0$ such that the following hold. For all $P \in Q(c)$, $\Phi^2_{j_1,t_{q}}(P)$ is the barycenter of a laminate of finite order $\nu^2_{j_1,j_2,q}(P)$ with
\[
\spt(\nu^2_{j_1,j_2,q}(P)) \subset \bigcup_{i = j_1 + 1}^{j_2-1}U_{i,q + 1}^1\cup \bigcup_{i = j_1}^{j_2-1}U_{i,q + 1}^2 \cup U_{j_2}^3
\]
and the following estimates hold:
\begin{align}
\label{mixed12}\nu^2_{j_1,j_2,q}(P)(U^2_{j_1,q + 1}) &\ge 1 - \frac{C}{2^{j_2}},\\
\label{mixed22}\nu^2_{j_1,j_2,q}(P)\left(\bigcup_{i = j_1 + 1}^{j_2-1}U_{i,q + 1}^1\cup \bigcup_{i = j_1 + 1}^{j_2-1}U_{i,q + 1}^2 \cup U_{j_2}^3\right) &\le \frac{C}{2^{j_2}},\\
\label{mixed32}\nu^2_{j_1,j_2,q}(P)(U_{j_2}^3) &> 0.
\end{align}
\end{prop}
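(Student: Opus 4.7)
The plan is to mimic the construction of Proposition \ref{comb1}, replacing Lemma \ref{costrone1} by Lemma \ref{costrone3} and replacing $A_{j_1}$ by $A_{j_1+1}$ in the appeal to Proposition \ref{costrlam}. Concretely, I would define
\[
\nu^2_{j_1,j_2,q}(P) \doteq \mu^1_{j_1,q}(P)\,\delta_{\Phi^2_{j_1,t_{q+1}}(P)} + \mu^2_{j_1,q}(P)\,\nu_{j_1+1,j_2,q+1}(P),
\]
where $\mu^1_{j_1,q},\mu^2_{j_1,q}$ are the weights of Lemma \ref{costrone3} and $\nu_{j_1+1,j_2,q+1}(P)$ is the laminate of finite order with barycenter $A_{j_1+1}(P)$ provided by Proposition \ref{costrlam} (this requires $j_2>j_1+1$ and $q+1\ge j_2-2$, both of which are in force since $q\ge j_2-2$).

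To see that $\nu^2_{j_1,j_2,q}(P)$ is a laminate of finite order with barycenter $\Phi^2_{j_1,t_q}(P)$, I would first perform the elementary splitting of $\delta_{\Phi^2_{j_1,t_q}(P)}$ along the rank-one segment $[\Phi^2_{j_1,t_{q+1}}(P),C_{j_1}(P)]=[\Phi^2_{j_1,t_{q+1}}(P),A_{j_1+1}(P)]$ given by Lemma \ref{costrone3}, producing $\mu^1_{j_1,q}\delta_{\Phi^2_{j_1,t_{q+1}}(P)}+\mu^2_{j_1,q}\delta_{A_{j_1+1}(P)}$; then I would iterate the elementary splittings that build $\nu_{j_1+1,j_2,q+1}(P)$ from $\delta_{A_{j_1+1}(P)}$ inside Proposition \ref{costrlam}. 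The support claim is immediate: $\{\Phi^2_{j_1,t_{q+1}}(P)\}\subset U^2_{j_1,q+1}$ by definition of $U^2_{j_1,q+1}$, and by Proposition \ref{costrlam} applied with $(j_1+1,j_2,q+1)$ the support of $\nu_{j_1+1,j_2,q+1}(P)$ lies in $\bigcup_{i=j_1+1}^{j_2-1}U^1_{i,q+1}\cup\bigcup_{i=j_1+1}^{j_2-1}U^2_{i,q+1}\cup U^3_{j_2}$; taking the union gives the asserted support.

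For the estimates I would enlarge $I$, if necessary, so that Lemma \ref{sigma} ensures pairwise disjointness of all the sets $U^k_{i,q+1}$ and $U^3_{j_2}$ appearing above. Then $\nu_{j_1+1,j_2,q+1}(P)$ gives mass zero to $U^2_{j_1,q+1}$, so
\[
\nu^2_{j_1,j_2,q}(P)(U^2_{j_1,q+1})=\mu^1_{j_1,q}(P)\ge 1-\tfrac{C}{2^q}\ge 1-\tfrac{C'}{2^{j_2}},
\]
the last inequality using $q\ge j_2-2$ and \eqref{estcostrone}; this proves \eqref{mixed12}. By complementarity inside the total mass $1$, \eqref{mixed22} reduces to $\mu^2_{j_1,q}(P)\le C/2^q\le C'/2^{j_2}$, which is again \eqref{estcostrone}. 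Finally,
\[
\nu^2_{j_1,j_2,q}(P)(U^3_{j_2})=\mu^2_{j_1,q}(P)\,\nu_{j_1+1,j_2,q+1}(P)(U^3_{j_2})>0
\]
by $\mu^2_{j_1,q}(P)>0$ from \eqref{estcostrone} and positivity of $\nu_{j_1+1,j_2,q+1}(P)(U^3_{j_2})$ from \eqref{lam:positive}, yielding \eqref{mixed32}.

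There is no real obstacle here: the construction is a two-step bookkeeping and the entire nontrivial content (the rank-one segment for $\Phi^2$, the quantitative weight bound $\mu^2_{j_1,q}\le C/2^q$, the existence of the underlying laminate for $A_{j_1+1}(P)$, and the pairwise disjointness needed to read off the measures of the various $U$-pieces) has been isolated in Lemmas \ref{costrone3} and \ref{sigma} and Proposition \ref{costrlam}. The only minor point to check carefully is the conversion of $C/2^q$ to $C'/2^{j_2}$, which is where the hypothesis $q\ge j_2-2$ is exactly used, exactly as in the proof of Proposition \ref{comb1}.
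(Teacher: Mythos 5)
Your proposal is correct and follows exactly the route the paper intends: the paper's proof of Proposition~\ref{comb2} simply says it is analogous to Proposition~\ref{comb1} using Lemma~\ref{costrone3} in place of Lemma~\ref{costrone1} and omits the details, and your argument fills in those details faithfully, including the key index shift $A_{j_1}\mapsto A_{j_1+1}$ forced by $C_{j_1}=A_{j_1+1}$ in Lemma~\ref{costrone3} and the verification that $q+1\ge j_2-2$ and $j_2>j_1+1$ allow the appeal to Proposition~\ref{costrlam}.
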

\begin{proof}
The proof is analogous to the one of Proposition \ref{comb1}, and is based on a combination of Lemma \ref{costrone3} and Proposition \ref{costrlam}. We omit the details.
\end{proof}

\section{The inductive proposition and conclusion}\label{sec:ind}

This Section is devoted to the proof of Theorem \ref{tmain}. As in every convex integration-type argument, the construction of the exact solution is inductive. Let $\Omega \subset \R^2$ be any convex, bounded and open domain. Define countably many families of sets
\[
\mathcal{F}_n \doteq \{\Omega_{1,n},\dots, \Omega_{N_n,n}\},
\]
with the following properties:
\begin{itemize}
\item for all $n$, $\{\Omega_{j,n}\}_j$ are pairwise disjoint, open sets, whose union gives $\Omega$ up to a set of zero measure;
\item for every $j \in \{1,\dots, N_{n + 1}\}$, if $\Omega_{j,n+1}\cap \Omega_{k(j),n} \neq \emptyset$ for some $k(j) \in \{1,\dots, N_n\}$, then $\Omega_{j,n + 1} \subset \Omega_{k(j), n}$;
\item $\diam(\Omega_{j,n}) \le \frac{1}{n}$, $\forall j \in \{1,\dots, N_n\}$, $\forall n \in \N$.
\end{itemize}
We fixed at the end of Section \ref{open} the parameter $c$ so that \eqref{incre}-\eqref{increless} and Proposition \ref{ansmag2} hold. Consequently, we let $I$ be an index for which Propositions \ref{ansmag2}-\ref{comb1}-\ref{comb2} and Lemmas \ref{sigma}-\ref{costrone3}-\ref{base} hold, and $t_0$ is fixed by Lemma \ref{sigma}. Furthermore, let as usual $Q(c)$ be the open set of parameters of our constructions, whose points are denoted by $P$. Further, we will always denote
\begin{equation}\label{gamma}
\gamma \doteq 1 + \min\{1,2(p-1)\}.
\end{equation}
The inductive construction will be formalized in Proposition \ref{induc}, and we illustrate here its first step. Start with any affine map $w_0 = Mx$ on $\Omega$, where $M \in A_I(Q(c))$, say $M = A_I(P)$, $P \in Q(c)$. We write $A_I(P)$ as the barycenter of the laminate of finite order $\nu_{I,I}(P)$ of Lemma \ref{base}. In every subset $\Omega_{j,I} \in \mathcal{F}_{I}$, use Proposition \ref{ind} to find a piecewise affine Lipschitz map $w_1$ with the following properties
\begin{enumerate}
\item $w_1 = Mx$ on $\partial \Omega$;
\item $\|w_1 - w_0\|_{L^\infty(\Omega, \R^2)}\le \frac{1}{2}$;
\item\label{open11} $Dw_1 \in V_{1}$ a.e.;
\item\label{OPEN111} the following estimates hold, for all $j \in \{1,\dots, N_1\}$:
\begin{enumerate}
\item $\displaystyle k_1\frac{|\Omega_{j,I}|}{I} \le |\{x \in \Omega_{j,I}: Dw_1(x) \in U_{I ,I}^1\}| \le k_2\frac{|\Omega_{j,I}|}{I}$;
\item $\displaystyle k_1\frac{|\Omega_{j,I}|}{I} \le |\{x \in \Omega_{j,I}: Dw_1(x) \in U_{I,I}^2\}| \le k_2\frac{|\Omega_{j,I}|}{I}$;
\item $0 < |\{x \in \Omega_{j,I}: Dw_1(x) \in U_{I+1}^3\}| \le e^{\frac{C}{I^\gamma}}e^{-2\frac{\min\{G_p(c),G_p(2c)\}}{I}}|\Omega_{j,I}|$.
\end{enumerate}
\end{enumerate}
Notice that \eqref{open11}-\eqref{OPEN111} are consequences of the openness of $V_1$, $U_{I,I}^1,U_{I,I}^2$ and $U_{I + 1}^3$. We can now move on to the inductive Proposition. Recall that
\[
V_{n} = \bigcup_{i = I}^{I + n-1} U_{i,I + n-1}^1\cup \bigcup_{i = I}^{I + n-1} U_{i,I +n-1}^2 \cup U_{I + n}^3.
\]
In the proof, we will consider the usual positive and radial mollifier $\rho \in C^\infty_c(B_1)$, and its associated mollification kernel $\rho_\delta$.
\begin{prop}\label{induc}
Let $n \ge 1$. Suppose we are given piecewise affine, Lipschitz maps $\{w_1,\dots, w_n\}$, $w_q: \Omega \to \R^2$, $\forall q \in \{1,\dots, n\}$, and positive decreasing numbers $\delta_1,\dots, \delta_n$, with $\delta_n \le 2^{-n}$ enjoying the following properties:
\begin{enumerate}[(1)]
\item \label{hp1} $w_q = Mx$ on $\partial\Omega$;
\item \label{hp2} $\|w_q\star \rho_{\delta_q} - w_q\|_{W^{1,1}(\R^2,\R^2)} \le 2^{-q}$, $\forall q \in \{1,\dots, n\}$;
\item \label{hp3} $\|w_{q+ 1} - w_q\|_{L^\infty(\R^2,\R^2)} \le \delta_{q}2^{-q}$, $\forall q \in \{1,\dots, n-1\}$;
\item \label{hp4} $Dw_q \in V_{ q}$, $\forall q \in \{1,\dots, n\}$.
\end{enumerate}
Then, there exist a piecewise affine, Lipschitz map $w_{n + 1}$ and a number $0 <\delta_{n + 1} < \min\{\delta_n,2^{-n -1}\}$, with the following properties:
\begin{enumerate}[(i)]
\item \label{th1} $w_{n + 1} = Mx$ on $\partial\Omega$;
\item \label{th2} $\|w_{n + 1}\star \rho_{\delta_{n + 1}} - w_{n + 1}\|_{W^{1,1}(\R^2,\R^2)} \le 2^{-n - 1}$;
\item \label{th3} $\|w_{n+ 1} - w_n\|_{L^\infty(\R^2,\R^2)} \le \delta_{n}2^{-n-1}$;
\item \label{th4} $Dw_{n + 1} \in V_{n+1}$;
\item \label{th5} The following estimates hold, for $k =1,2, i \in \{0,\dots, n-1\}, j \in \{1,\dots, N_{n + 1}\}$:
\begin{equation}\label{finalkbasso}
\begin{split}
\left(1 - \frac{C}{2^{n}}\right)&|\{x \in \Omega_{j,n+1}: Dw_{n} \in U^k_{I + i, I + n-1}\}| \le |\{x \in \Omega_{j,n+1}: Dw_{n + 1} \in U^k_{I + i, I + n}\}|\\
&\le|\{x \in \Omega_{j,n+1}: Dw_{n} \in U^k_{I + i, I + n-1}\}| + \frac{Cn}{2^{n}}|\Omega_{j,n + 1}|.
\end{split}
\end{equation}
\begin{equation}\label{finalkalto}
\begin{split}
\frac{k_1}{n}&|\{x \in \Omega_{j,n+1}: Dw_{n} \in U^3_{I + n}\}| \le|\{x \in \Omega_{j,n+1}: Dw_{n + 1} \in U^k_{I + n, I + n}\}|\\
&\le \frac{k_2}{n}|\{x \in \Omega_{j,n+1}: Dw_{n} \in U^3_{I + n}\}| + \frac{Cn}{2^{n}}|\Omega_{j,n + 1}|,
\end{split}
\end{equation}
and finally
\begin{equation}\label{finalU3}
\begin{split}
0 &< |\{x \in \Omega_{j,n+1}: Dw_{n + 1} \in U^3_{I + n + 1}\}|\\
&\qquad\le e^{\frac{C}{n^\gamma}}e^{\frac{-2 \min\{G_p(c),G_p(2c)\}}{I +n}}|\{x \in \Omega_{j,n+1}: Dw_{n} \in U^3_{I + n}\}| + \frac{Cn}{2^{n}}|\Omega_{j,n + 1}|.
\end{split}
\end{equation}
\end{enumerate}
\end{prop}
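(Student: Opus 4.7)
The plan is to inductively apply Lemma~\ref{ind} on each affine piece of $w_n$ using a laminate of finite order from Section~\ref{sec:proplam} whose support lies inside $V_{n+1}$, with a common small perturbation parameter $\eps>0$. Openness of the sets $U^k_{i,q}$ (Proposition~\ref{ansmag2}\eqref{d1}) together with their pairwise disjointness (Lemma~\ref{sigma}) let us absorb the $\eps$-errors of Lemma~\ref{ind} inside individual components of $V_{n+1}$. Before applying Lemma~\ref{ind} I would first refine the piecewise-affine decomposition of $w_n$ by intersecting with $\mathcal F_{n+1}$, so that each piece used in the construction is contained in a single $\Omega_{j,n+1}$ and $w_n$ is still affine on it.

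On such a refined piece $\omega$ with $Dw_n\equiv M_0$, disjointness of the union defining $V_n$ forces $M_0$ to lie in exactly one component. Using injectivity (Lemma~\ref{simpl}) and openness (Proposition~\ref{ansmag2}\eqref{d1}) of the parametrizations, I invert to get $P\in Q(c)$ with either $M_0=\Phi^k_{I+i,t_{I+n-1}}(P)$ for some $k\in\{1,2\}$, $i\in\{0,\dots,n-1\}$, or $M_0=A_{I+n}(P)$. In the first case I apply Proposition~\ref{comb1} (if $k=1$) or Proposition~\ref{comb2} (if $k=2$) with $j_1=I+i$, $q=I+n-1$, $j_2=I+n+1$; this choice satisfies all hypotheses uniformly in $i$ and places the support of the resulting laminate entirely in $V_{n+1}$ (in particular $U^3_{j_2}=U^3_{I+n+1}\subset V_{n+1}$). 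In the second case I apply Lemma~\ref{base} with $i=I+n$, $q=I+n$; its support $\{\Phi^1_{I+n,t_{I+n}}(P),\Phi^2_{I+n,t_{I+n}}(P),A_{I+n+1}(P)\}$ lies in $U^1_{I+n,I+n}\cup U^2_{I+n,I+n}\cup U^3_{I+n+1}\subset V_{n+1}$.

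Fix $\eps\in(0,\delta_n 2^{-n-1})$ uniformly over all pieces, small enough that the $\eps$-neighborhood of every atom of every laminate used stays inside the corresponding open component of $V_{n+1}$; this is possible because the set of atoms is finite and each component of $V_{n+1}$ is open. Applying Lemma~\ref{ind} on each piece and gluing produces a piecewise affine Lipschitz $w_{n+1}$, and parts~(1)-(2) of Lemma~\ref{ind} deliver \eqref{th1}, \eqref{th3} and \eqref{th4} directly. Property \eqref{th2} is then enforced \emph{after} $w_{n+1}$ is fixed by choosing $\delta_{n+1}\in(0,\min\{\delta_n,2^{-n-1}\})$ small enough; since $w_{n+1}$ is Lipschitz on the bounded domain $\Omega$, its mollifications converge in $W^{1,1}$ as $\delta\to 0$, so such $\delta_{n+1}$ exists.

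The substantive step is \eqref{th5}. Fix $\Omega_{j,n+1}$ and track mass via part~(3) of Lemma~\ref{ind}, which assigns measure exactly $\nu(\{X\})|\omega|$ to the $\eps$-neighborhood of each atom $X$ of the laminate $\nu$ used on a piece $\omega\subset\Omega_{j,n+1}$. For $i\le n-1$ and $k\in\{1,2\}$, $U^k_{I+i,I+n}$ receives a fraction $\ge 1-C/2^{I+n+1}$ from same-type $U^k_{I+i,I+n-1}$-pieces (by \eqref{mixed11}, \eqref{mixed12}) and at most $C/2^{I+n+1}$ per piece of each of the at most $2n$ other piece-types (by \eqref{mixed21}, \eqref{mixed22}); summing yields \eqref{finalkbasso}. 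The set $U^k_{I+n,I+n}$ is reached only from $U^3_{I+n}$-pieces via Lemma~\ref{base} with weight $\lambda^k_{I+n,I+n}\in[k_1/(I+n),k_2/(I+n)]$ (\eqref{w1def>2}-\eqref{w2def>2}), plus the same $Cn/2^n$ leakage, giving \eqref{finalkalto}. Finally $U^3_{I+n+1}$ receives at most $e^{C/n^\gamma}e^{-2\min\{G_p(c),G_p(2c)\}/(I+n)}$ per $U^3_{I+n}$-piece (by \eqref{w3def>2}) and at most $C/2^{I+n+1}$ per other piece (by \eqref{mixed21}, \eqref{mixed22}); strict positivity is ensured by \eqref{w3def>2}, \eqref{mixed31}, \eqref{mixed32}. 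The only delicate point is this bookkeeping: each individual cross-leakage is exponentially small in $n$, but there are $\Theta(n)$ sources, producing precisely the $Cn/2^n$ error term appearing in \eqref{th5}.
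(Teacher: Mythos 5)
Your proposal follows essentially the same strategy as the paper: refine the affine pieces of $w_n$ so each lies in a single $\Omega_{j,n+1}$, identify the component of $V_n$ containing $Dw_n$ there, invoke Proposition~\ref{comb1}, Proposition~\ref{comb2}, or Lemma~\ref{base} (with the same choice of parameters $j_1$, $j_2$, $q$), realize these laminates via Lemma~\ref{ind} with a uniformly small $\eps$ absorbed by the open components of $V_{n+1}$, and choose $\delta_{n+1}$ afterwards using Lipschitzianity. The bookkeeping of mass in \eqref{th5} — a dominant same-type contribution plus $O(n)$ cross-leakages each of size $O(2^{-n})$ — also mirrors the paper's argument exactly.
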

Notice that in the previous statement, in order to write quantities like
\[
 \|w_{n + 1}\star \rho_{\delta_{n + 1}} - w_{n + 1}\|_{W^{1,1}(\R^2,\R^2)},
\]
we assume to have extended all maps $w_q$ as $w_q = Mx$ outside $\Omega$. By \eqref{hp1}-\eqref{th1}, this extension preserves the Lipschitzianity of those maps, hence we are allowed to do so.
\begin{proof}
Let $\Omega' \subset \Omega_{j,n+1} \subset \Omega_{k(j),n}$ be an open set where $w_n$ is affine, i.e. $w_n(x) = S x +b$ for some $S \in V_n$, $b \in \R^2$. By Lemma \ref{sigma}, $V_n$ is a disjoint union, hence there are only three cases: either
\begin{equation}\label{case1}
S \in \bigcup_{i = I}^{I + n-1} U_{i,I + n-1}^1,
\end{equation}
\begin{equation}\label{case2}
S \in\bigcup_{i = I}^{I + n-1} U_{i,I +n-1}^2,
\end{equation}
or
\begin{equation}\label{case3}
S \in U_{I + n}^3.
\end{equation}
Assume \eqref{case1} holds, and in particular that $S \in U^1_{I + i,I+n-1}$ for some $i \in \{0,\dots, n-1\}$. By definition, there exists $P \in Q(c)$ such that $$S = \Phi^1_{I + i, t_{I + n-1}}(P).$$ We can then use the laminate $\nu_{I+i,I+n+1,I+n-1}^1(P)$ of Proposition \ref{comb1} combined with Proposition \ref{ind} to find a Lipschitz and piecewise affine map $f$ with the following properties
\begin{itemize}
\item $f|_{\partial \Omega'} = w_n|_{\partial\Omega'}$;
\item Almost everywhere on $\Omega'$, $$Df \in \bigcup_{j = I + i}^{I + n} U_{j,I + n}^1\cup \bigcup_{j = I + i}^{I + n} U_{j,I +n}^2 \cup U_{I + n+1}^3$$ with the estimates:
\begin{equation}\label{est1fin}
\left(1 - \frac{C}{2^{n}}\right)|\Omega'| \le |\{x \in \Omega': Df \in U_{I +i,I + n}^1\}| \le |\Omega'| ,
\end{equation}
\begin{equation}\label{est2fin}
\left|\left\{x \in \Omega': Df \in \bigcup_{j = I + i + 1}^{I + n} U_{j,I + n}^1\cup \bigcup_{j = I + i}^{I + n} U_{j,I +n}^2 \cup U_{I + n+1}^3\right\}\right| \le \frac{C}{2^{n}}|\Omega'|,
\end{equation}
\begin{equation}\label{basso1}
0 <\left|\left\{x \in \Omega': Df \in U_{I + n + 1}^3\right\}\right|;
\end{equation}
\item $\|f- w_n\|_{L^\infty}(\Omega') \le \delta_n2^{-n -1 }$.
\end{itemize}
On such a set $\Omega'$, we replace $w_n|_{\Omega'}$ by $f$. Analogously, if \eqref{case2} holds, i.e. $S \in U^2_{I + i,I+n-1}$ for some $i \in \{0,\dots, n-1\}$, then, by definition, $$S = \Phi^2_{I + i, t_{I + n-1}}(P)$$ for some $P \in Q(c)$, and we can then use the laminate $\nu^2_{I+i,I+n+1,I+n-1}(P)$ of Proposition \ref{comb2} combined with Proposition \ref{ind} to find a Lipschitz and piecewise affine map $g$ with the following properties
\begin{itemize}
\item $g|_{\partial \Omega'} = w_n|_{\partial\Omega'}$;
\item Almost everywhere on $\Omega'$, $$Dg \in \bigcup_{j = I + i + 1}^{I + n} U_{j,I + n}^1\cup \bigcup_{j = I + i}^{I + n} U_{j,I +n}^2 \cup U_{I + n+1}^3$$ with the estimates:
\begin{equation}\label{est3fin}
\left(1 - \frac{C}{2^{n}}\right)|\Omega'| \le |\{x \in \Omega': Dg \in U_{I +i,I + n}^2\}| \le |\Omega'| ,
\end{equation}
\begin{equation}\label{est4fin}
\left|\left\{x \in \Omega': Dg \in \bigcup_{j = I + i + 1}^{I + n} U_{j,I + n}^1\cup \bigcup_{j = I + i+1}^{I + n} U_{j,I +n}^2 \cup U_{I + n+1}^3\right\}\right| \le \frac{C}{2^{n}}|\Omega'|,
\end{equation}
and
\begin{equation}\label{basso2}
0 <\left|\left\{x \in \Omega': Dg \in U_{I + n + 1}^3\right\}\right|;
\end{equation}
\item $\|g- w_n\|_{L^\infty}(\Omega') \le \delta_n2^{-n -1 }$.
\end{itemize}
On such a set $\Omega'$, we replace $w_n|_{\Omega'}$ by $g$. Finally, in the third case, \eqref{case3}, we have $S = A_{I + n}(P)$ for some $P \in Q(c)$. We use Lemma \ref{base} combined with Proposition \ref{ind} to find a Lipschitz and piecewise affine map $h$ with the following properties
\begin{itemize}
\item $h|_{\partial \Omega'} = w_n|_{\partial\Omega'}$;
\item Almost everywhere on $\Omega'$,
\[
Dh \in U_{I + n,I + n}^1 \cup  U_{I + n,I + n}^2 \cup U_{I + n + 1}^3,
\]
with the estimates:
\begin{equation}\label{est5fin}
\frac{k_1}{n}|\Omega'| \le |\{x \in \Omega': Dh \in U_{I +n,I + n}^1\}| \le \frac{k_2}{n}|\Omega'|,
\end{equation}
\begin{equation}\label{est6fin}
\frac{k_1}{n}|\Omega'| \le |\{x \in \Omega': Dh \in U_{I +n,I + n}^2\}| \le \frac{k_2}{n}|\Omega'|
\end{equation}
and
\begin{equation}\label{est7fin}
0 < |\{x \in \Omega': Dh \in U_{I +n + 1}^3\}| \le e^{\frac{C}{n^\gamma}}e^{\frac{-2 \min\{G_p(c),G_p(2c)\}}{I +n}}|\Omega'|
\end{equation}
\item $\|h- w_n\|_{L^\infty}(\Omega') \le \delta_n2^{-n-1}$.
\end{itemize}
Finally, for a set $\Omega'$ of this type, we replace $w_n|_{\Omega'}$ by $h$. These replacements precisely give $w_{n + 1}$. Notice that in all of the above estimates the quantities $2^n$, $n$ and $e^{\frac{C}{n^\gamma}}$ should have been $2^{n + I + 1}$, $n + I + 1$ and $e^{\frac{C}{(I + n)^\gamma}}$, but, since they are all comparable, we can simply reabsorb the errors in making these substitutions inside the constants $C$, $k_1,k_2$.
\\
\\By construction, \eqref{th1}-\eqref{th3}-\eqref{th4} hold. The existence of $\delta_{n + 1}$ as in $\eqref{th2}$ is guaranteed by the Lipschitzianity of $w_{n + 1}$. We can now show the estimates asserted in \eqref{th5}. Let first $i \in \{0,\dots, n-1\}$, $j \in \{1,\dots, N_{n + 1}\}$. Then:
\begin{equation*}\label{PHI1basso}
\begin{split}
\left(1 - \frac{C}{2^{n}}\right)&|\{x \in \Omega_{j,n+1}: Dw_{n} \in U^1_{I + i, I + n-1}\}| \overset{\eqref{est1fin}}{\le} |\{x \in \Omega_{j,n+1}: Dw_{n + 1} \in U^1_{I + i, I + n}\}|\\
&\overset{\eqref{est1fin}-\eqref{est2fin}-\eqref{est4fin}}{\le}|\{x \in \Omega_{j,n+1}: Dw_{n} \in U^1_{I + i, I + n-1}\}| + \sum_{\ell = 0}^{i - 1}\frac{C}{2^{n}}|\{x \in \Omega_{j,n+1}: Dw_{n} \in U^1_{I + \ell, I + n-1}\}| \\
&\qquad+  \sum_{\ell = 0}^{i - 1}\frac{C}{2^{n}}|\{x \in \Omega_{j,n+1}: Dw_{n} \in U^2_{I + \ell, I + n-1}\}|\\
&\le |\{x \in \Omega_{j,n+1}: Dw_{n} \in U^1_{I + i, I + n-1}\}| + \frac{Cn}{2^{n}}|\Omega_{j,n + 1}|.
\end{split}
\end{equation*}
With analogous computations, for the same ranges of $i$ and $j$, we find
\begin{equation*}\label{PHI2basso}
\begin{split}
\left(1 - \frac{C}{2^{n}}\right)&|\{x \in \Omega_{j,n+1}: Dw_{n} \in U^2_{I + i, I + n-1}\}| \overset{\eqref{est3fin}}{\le} |\{x \in \Omega_{j,n+1}: Dw_{n + 1} \in U^2_{I + i, I + n}\}|\\
&\overset{\eqref{est3fin}-\eqref{est2fin}-\eqref{est4fin}}{\le}|\{x \in \Omega_{j,n+1}: Dw_{n} \in U^2_{I + i, I + n-1}\}| + \frac{Cn}{2^{n}}|\Omega_{j,n + 1}|.
\end{split}
\end{equation*}
We still need to write the estimates for $i = n$:
\begin{equation*}\label{PHI1alto}
\begin{split}
\frac{k_1}{n}&|\{x \in \Omega_{j,n+1}: Dw_{n} \in U^3_{I + n}\}| \overset{\eqref{est5fin}}{\le} |\{x \in \Omega_{j,n+1}: Dw_{n + 1} \in U^1_{I + n, I + n}\}|\\
&\overset{\eqref{est5fin}-\eqref{est2fin}-\eqref{est4fin}}{\le} \frac{k_2}{n}|\{x \in \Omega_{j,n+1}: Dw_{n} \in U^3_{I + n}\}| + \frac{Cn}{2^{n}}|\Omega_{j,n + 1}|,
\end{split}
\end{equation*}
and
\begin{equation*}\label{PHI2alto}
\begin{split}
\frac{k_1}{n}&|\{x \in \Omega_{j,n+1}: Dw_{n} \in U^3_{I + n}\}| \overset{\eqref{est5fin}}{\le} |\{x \in \Omega_{j,n+1}: Dw_{n + 1} \in U^2_{I + n, I + n}\}|\\
&\overset{\eqref{est6fin}-\eqref{est2fin}-\eqref{est4fin}}{\le} \frac{k_2}{n}|\{x \in \Omega_{j,n+1}: Dw_{n} \in U^3_{I + n}\}| + \frac{Cn}{2^{n}}|\Omega_{j,n + 1}|.
\end{split}
\end{equation*}
Finally,
\begin{equation*}\label{U3}
\begin{split}
0 &\overset{\eqref{basso1}-\eqref{basso2}-\eqref{est7fin}}{<} |\{x \in \Omega_{j,n+1}: Dw_{n + 1} \in U^3_{I + n + 1}\}|\\
&\overset{\eqref{est7fin}-\eqref{est2fin}-\eqref{est4fin}}{\le} e^{\frac{C}{n^\gamma}}e^{\frac{-2 \min\{G_p(c),G_p(2c)\}}{I +n}}|\{x \in \Omega_{j,n+1}: Dw_{n} \in U^3_{I + n}\}| + \frac{Cn}{2^{n}}|\Omega_{j,n + 1}|.
\end{split}
\end{equation*}
This concludes the proof of the inductive Proposition.
\end{proof}

We now show that the sequence $\{w_n\}$ is equibounded in $W^{1,1 + \eps}(\Omega,\R^2)$, for some $\eps = \eps(p) > 0$.

\begin{prop}\label{equib}
The sequence $\{w_n\}$ is equibounded in $W^{1,1 + \eps}(\Omega,\R^2)$.
\end{prop}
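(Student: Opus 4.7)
The plan is to bound $\int_\Omega |Dw_n|^{1+\eps}$ uniformly in $n$ by combining a uniform pointwise bound on matrices in the cells $U^1_{i,q}, U^2_{i,q}, U^3_i$ with the measure estimates \eqref{finalkbasso}--\eqref{finalU3} of Proposition \ref{induc}. The $L^{1+\eps}$ norm of $w_n$ itself is trivially controlled: by \eqref{hp1}--\eqref{hp3}, $w_n - Mx$ is a telescoping sum of uniformly $L^\infty$-small perturbations, so $\|w_n\|_{L^\infty(\Omega)}$ is uniformly bounded, and only the gradient term requires attention.

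First I would establish the pointwise size bound
\[
|M| \le C\, i^{2\max\{1,\, p-1\}} \qquad \text{for every } M \in U^1_{i,q} \cup U^2_{i,q} \cup U^3_i,
\]
by reading off the entries from \eqref{A}, \eqref{BE}, \eqref{CD}, \eqref{phi1}, \eqref{phi2} and using $|x_i| \lesssim i^2$, $|y_i| \lesssim i^{2(p-1)}$, $|g_w^{-1}(y_i)| \lesssim i^2$ (from \eqref{invg}), $|g_w(x_{i-1})| \lesssim i^{2(p-1)}$ and $|h_w(\,\cdot\,)| \lesssim i^{2(p-2)}$ on the relevant arguments, together with $|z_i(P)| \le C i^{2(p-2)}$ from Proposition \ref{ansmag2}\eqref{b1} and the analogous bound for $v_i$ that follows via \eqref{vwi}.

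Next I would iterate the measure estimates in \eqref{th5}. Set $\eta := \min\{G_p(c), G_p(2c)\}$, which by \eqref{incre}--\eqref{increless} satisfies $\eta > \max\{1, p-1\}$, and abbreviate $\mu^3_n := |\{Dw_n \in U^3_{I+n}\}|$ and $\mu^k_{n,j} := |\{Dw_n \in U^k_{j, I+n-1}\}|$. Summing \eqref{finalU3} over the sets $\Omega_{j,n+1}$ produces the recursion $\mu^3_{n+1} \le e^{C/n^\gamma} e^{-2\eta/(I+n)} \mu^3_n + C n 2^{-n} |\Omega|$; since $\gamma > 1$ makes $\prod_k e^{C/k^\gamma}$ a bounded constant while $\prod_k e^{-2\eta/(I+k)} \sim C n^{-2\eta}$, iteration yields $\mu^3_n \le C n^{-2\eta}$. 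Then \eqref{finalkalto} gives $\mu^k_{n+1,I+n} \le C n^{-2\eta-1}$ (the geometric tail being negligible against the polynomial decay), and iterating \eqref{finalkbasso} forward in $n$ propagates this bound to
\[
\mu^k_{n,j} \le C (1 + j - I)^{-2\eta-1} \qquad \text{for all } n \ge j - I + 1,
\]
the accumulated error $|\Omega| \sum_{l \ge j - I + 1} C l\, 2^{-l}$ being exponentially small.

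Finally, using that $V_n$ is a disjoint union (Lemma \ref{sigma}), the integral splits into contributions over each cell and the two bounds combine into
\[
\int_\Omega |Dw_n|^{1+\eps} dx \le C \sum_{j=I}^{I+n-1} j^{2(1+\eps)\max\{1,p-1\}} (1 + j - I)^{-2\eta - 1} + C (I+n)^{2(1+\eps)\max\{1,p-1\}} n^{-2\eta}.
\]
Both terms are uniformly bounded in $n$ precisely when $(1+\eps)\max\{1,p-1\} < \eta$, which is the point at which the strict inequality in \eqref{eqn:choice-c} is invoked to select $\eps = \eps(p) > 0$ small. The main obstacle is the careful bookkeeping of the iteration of the three inequalities in \eqref{th5}: one must verify that the factors $e^{C/n^\gamma}$ collapse to a bounded constant (which is guaranteed by $\gamma > 1$ in \eqref{gamma}) and that the polynomial error terms $C n 2^{-n}$ never dominate the leading polynomial decay. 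The whole strategy succeeds only because of the strict gap $\eta - \max\{1, p-1\} > 0$ provided by the choice of $c$.
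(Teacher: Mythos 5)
Your proof is correct and follows essentially the same route as the paper. The paper separates the measure decay estimates into a standalone Lemma \ref{upest}, which you reproduce inline (iterating \eqref{finalU3} to get $\mu^3_n \lesssim n^{-2\eta}$, then \eqref{finalkalto} and \eqref{finalkbasso} to propagate $\mu^k_{n,j} \lesssim (1+j-I)^{-2\eta-1}$), and the pointwise size bound $|M| \lesssim i^{2\max\{1,p-1\}}$ on each cell is derived the same way from Proposition \ref{ansmag2}\eqref{b1} and the explicit formulas for $\Phi^k_{i,t}$ and $A_i$; the closing step, selecting $\eps>0$ from the strict inequality $\min\{G_p(c),G_p(2c)\} > \max\{1,p-1\}$ of \eqref{eqn:choice-c}, is identical.
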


We first show the following:
\begin{lemma}\label{upest}
There exists a constant $C = C(p) > 0$ such that the following estimates hold for all $n\ge 1$, $0\le i \le n-1$,
\begin{align}
\label{afterind1}|\{x \in \Omega: Dw_{n} \in U^3_{I + n}\}| &\le Cn^{-2 \min\{G_p(c),G_p(2c)\}},\\
\label{afterind2}|\{x \in \Omega: Dw_{n} \in U^1_{I+ i,I + n - 1}\}| &\le Ci^{-2 \min\{G_p(c),G_p(2c)\} - 1},\\
\label{afterind3}|\{x \in \Omega: Dw_{n} \in U^2_{I+ i,I + n - 1}\}| &\le Ci^{-2 \min\{G_p(c),G_p(2c)\} - 1}.
\end{align}
\end{lemma}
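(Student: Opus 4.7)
The plan is to sum the cellwise estimates of Proposition~\ref{induc} over the partition $\mathcal{F}_{n+1}$ of $\Omega$ and then iterate the resulting scalar recurrences. Set $\beta \doteq 2\min\{G_p(c),G_p(2c)\}$ and
\[
a_n \doteq |\{x\in\Omega: Dw_n\in U^3_{I+n}\}|, \qquad b^{k,i}_n \doteq |\{x\in\Omega: Dw_n\in U^k_{I+i,I+n-1}\}|
\]
for $k=1,2$ and $0\le i\le n-1$. Summing \eqref{finalU3} over $j\in\{1,\dots,N_{n+1}\}$ yields the scalar recurrence
\[
a_{n+1} \le \theta_n\, a_n + \frac{Cn}{2^n}|\Omega|, \qquad \theta_n \doteq e^{C/n^\gamma}e^{-\beta/(I+n)}.
\]

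First I would establish \eqref{afterind1} by iterating this recurrence. Because $\gamma>1$ by \eqref{gamma}, the series $\sum_k k^{-\gamma}$ converges, so $\prod_k e^{C/k^\gamma}\le C$; combined with the integral-comparison inequality $\sum_{k=m}^{n-1}(I+k)^{-1}\ge \ln\frac{I+n}{I+m}$ this gives
\[
\prod_{k=m}^{n-1}\theta_k \le C\left(\frac{I+m}{I+n}\right)^\beta.
\]
Expanding the Duhamel-type formula
\[
a_n \le \Big(\prod_{k=1}^{n-1}\theta_k\Big) a_1 + |\Omega|\sum_{m=1}^{n-1}\Big(\prod_{k=m+1}^{n-1}\theta_k\Big)\frac{Cm}{2^m}
\]
and inserting the product bound, I would conclude
\[
a_n \le \frac{C}{n^\beta}\Big(a_1 + |\Omega|\sum_{m=1}^\infty \frac{(I+m)^\beta\, m}{2^m}\Big) \le \frac{C}{n^\beta},
\]
the last series converging thanks to the super-geometric factor $2^{-m}$.

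Then for \eqref{afterind2}--\eqref{afterind3} I would observe that the region $U^k_{I+i,\cdot}$ first appears at stage $i+1$ through the splitting of $U^3_{I+i}$ encoded by \eqref{finalkalto}. Summing \eqref{finalkalto} over $\mathcal{F}_{i+1}$ (applied with its internal index equal to $i$) and inserting the just-proved bound $a_i\le C/i^\beta$ gives the base estimate
\[
b^{k,i}_{i+1} \le \frac{k_2}{i}a_i + \frac{Ci}{2^i}|\Omega| \le \frac{C}{i^{\beta+1}},
\]
since $i/2^i \le C/i^{\beta+1}$ as exponentials dominate polynomials. For $n>i+1$, summing \eqref{finalkbasso} over $\mathcal{F}_{n+1}$ with the same fixed $i$ produces the one-sided telescoping bound $b^{k,i}_{n+1}\le b^{k,i}_n + \frac{Cn}{2^n}|\Omega|$; iterating from $n=i+1$ onward,
\[
b^{k,i}_n \le b^{k,i}_{i+1} + |\Omega|\sum_{m=i+1}^\infty \frac{Cm}{2^m} \le \frac{C}{i^{\beta+1}},
\]
which establishes the remaining two inequalities.

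The principal technical obstacle is the Duhamel iteration for $a_n$: one must carefully match the $n^{-\beta}$ contraction coming from $\prod e^{-\beta/(I+k)}$ against the polynomial growth $(I+m)^\beta$ appearing in the error weights, which is only tamed by the super-geometric cutoff $2^{-m}$. The convergence of $\prod e^{C/k^\gamma}$, relying on $\gamma>1$, and the sharpness of the exponent $\beta$ supplied by \eqref{SIL} of Section~\ref{ASYSIL}, are both essential for the rate $n^{-\beta}$ to survive the iteration; any weaker control on the remainder in \eqref{SIL} would replace $\beta$ by a smaller exponent in the final bounds and would not suffice for the subsequent $W^{1,1+\eps}$ equiboundedness argument.
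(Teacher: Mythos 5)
Your proposal follows essentially the same route as the paper's proof: you sum the cellwise estimates of Proposition \ref{induc} over the partition, iterate the resulting scalar recurrence for $|\{Dw_n \in U^3_{I+n}\}|$ via a Duhamel-type expansion controlled by the product bound $\prod e^{-\beta/(I+k)}$, and then derive \eqref{afterind2}--\eqref{afterind3} from the base case at stage $i+1$ (coming from \eqref{finalkalto}) together with the telescoping bound from \eqref{finalkbasso}. This matches the paper's argument step by step; the only differences are notational (your explicit $a_n$, $b^{k,i}_n$, $\theta_n$ and the named Duhamel formula versus the paper's directly unrolled recursion).
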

\begin{proof}
Sum over $j \in \{1,\dots, N_{n + 1}\}$ inequality \eqref{finalU3} to find
\[
|\{x \in \Omega: Dw_{n + 1} \in U^3_{I + n + 1}\}|\le e^{\frac{C}{n^\gamma}}e^{\frac{-2\min\{ G_p(c),G_p(2c)\}}{I +n}}|\{x \in \Omega: Dw_{n} \in U^3_{I + n}\}| + \frac{Cn}{2^{n}}
\]
Now we can apply this relation recursively to discover that, for all $n \in \N$,
\[
|\{x \in \Omega: Dw_{n} \in U^3_{I + n}\}| \le C\left(\prod_{\ell = 1}^{n-1}e^{\frac{C}{\ell^\gamma}}e^{\frac{-2 \min\{G_p(c),G_p(2c)\}}{I +\ell}} + \sum_{\ell = 1}^{n-1}\left(\prod_{j = \ell + 1}^{n-1}e^{\frac{C}{j^\gamma}}e^{\frac{-2\min\{G_p(c),G_p(2c)\}}{I +j}}\right)\frac{\ell}{2^\ell} \right).
\]
In the previous expression, we used the convention introduced in \eqref{conven}. Using \eqref{Euler}, we can estimate
\[
\prod_{\ell = 1}^{n-1}e^{\frac{-2 \min\{G_p(c),G_p(2c)\}}{I +\ell}} = e^{-2\min\{G_p(c),G_p(2c)\}\sum_{\ell = 1}^{n-1}\frac{1}{I + \ell}} \le C n^{-2\min\{G_p(c),G_p(2c)\}}.
\]
On the other hand, $\gamma > 1$ by \eqref{gamma}. Therefore, $\prod_{\ell = 1}^ne^{\frac{C}{\ell^\gamma}}$ is uniformly bounded in $n$. Concerning the second addendum, we start by estimating for all $\ell + 1 \le n-1$:
\begin{align*}
\prod_{j = \ell + 1}^{n-1}e^{\frac{C}{j^\gamma}}e^{\frac{-2 \min\{G_p(c),G_p(2c)\}}{I +j}} &\le e^{\sum_{\ell = 1}^{\infty}\frac{C}{j^\gamma}}e^{\sum_{j = \ell + 1}^{n-1}\frac{-2 \min\{G_p(c),G_p(2c)\}}{I +j}} \\
&\overset{\eqref{Euler}}{\le} Ce^{-2 \min\{G_p(c),G_p(2c)\}\ln\left(\frac{I + n - 1}{I +\ell + 1}\right)} \le C\left(\frac{I + \ell + 1}{I + n - 1}\right)^{2\min\{G_p(c),G_p(2c)\}}.
\end{align*}
Thus,
\begin{align*}
\sum_{\ell = 1}^{n-1}\prod_{j = \ell + 1}^{n-1}e^{\frac{C}{j^\gamma}}e^{\frac{-2 \min\{G_p(c),G_p(2c)\}}{I +j}} \frac{\ell}{2^\ell} &\le C\left(\frac{1}{I + n - 1}\right)^{2\min\{G_p(c),G_p(2c)\}}\sum_{\ell = 1}^{n-1}(I + \ell + 1)^{2\min\{G_p(c),G_p(2c)\}}\frac{\ell}{2^\ell} \\
&\le C n^{-2\min\{G_p(c),G_p(2c)\}},
\end{align*}
which concludes the proof of \eqref{afterind1}. Using \eqref{afterind1} and \eqref{finalkalto}, for $k = 1,2$ and for all $n \in \N$, we have:
\begin{equation}\label{nn}
|\{x \in \Omega: Dw_{n} \in U^k_{I + n-1, I + n-1}\}| \le Cn^{-2\min\{G_p(c),G_p(2c)\} - 1} + \frac{Cn}{2^{n}}.
\end{equation}
Through \eqref{finalkbasso} and a simple inductive reasoning, we also find, for all $n \ge 1, 0 \le i \le n- 1$,
\begin{align*}
|\{x \in \Omega: Dw_{n + 1} \in &U^k_{I + i, I + n}\}|\le|\{x \in \Omega: Dw_{n} \in U^k_{I + i, I + n-1}\}| + \frac{Cn}{2^{n}} \\
&\overset{\eqref{nn}}{\le} C(i + 1)^{-2\min\{G_p(c),G_p(2c)\} -1} + C\sum_{\ell = i + 1}^\infty\frac{\ell}{2^{\ell}} \le C(i + 1)^{-2\min\{G_p(c),G_p(2c)\} -1}.
\end{align*}
This concludes the proof.
\end{proof}
We can now prove Proposition \ref{equib}.
\begin{proof}[Proof of Proposition \ref{equib}]
Let $\Omega_n \doteq  \{x \in \Omega: Dw_{n} \in U^3_{I + n}\}$, $\Omega^1_{i,n}\doteq \{x \in \Omega: Dw_{n} \in U^1_{I+ i,I + n - 1}\}$, $\Omega^2_{i,n}\doteq \{x \in \Omega: Dw_{n} \in U^2_{I+ i,I + n - 1}\}$, for $i \in \{0,\dots, n-1\}$. Notice that, by definition:
\begin{align*}
\sup\{|M|: M \in U_{I + i,I +n - 1}^1\} &= \sup_{P \in Q(c)}|\Phi^1_{I + i,t_{I + n - 1}}(P)|,\\
\sup\{|M|: M \in U_{I + i,I +n - 1}^2\} &= \sup_{P \in Q(c)}|\Phi^2_{I + i,t_{I + n - 1}}(P)|,\\
\sup\{|M|: M \in U^3_{I +n}\} &= \sup_{P \in Q(c)}|A_{I + n}(P)|.
\end{align*}
Thus we can estimate, for any $q \in [1,\infty)$,
\begin{equation}\label{q}
\begin{split}
\|Dw_n\|_{L^q(\Omega,\R^2)}^q &\le \sum_{i = 1}^{n} \sup_{P \in Q(c)}|\Phi^1_{I + i,t_{I + n  - 1}}(P)|^q|\Omega_{i,n}^1| +\sum_{i = 1}^{n} \sup_{P \in Q(c)}|\Phi^2_{I + i,t_{I + n - 1}}(P)|^q|\Omega_{i,n}^2| \\
&+\sup_{P \in Q(c)}|A_{I + n}(P)|^q|\Omega_n|.
\end{split}
\end{equation}
Recall that by Proposition \ref{ansmag2}\eqref{b1},
\[
\sup_{P \in \overline{Q(c)}}|z_i(P)| \le Ci^{2(p-2)}, \quad \forall i \ge 1.
\]
Through \eqref{vwi}, we see that also
\[
\sup_{P \in \overline{Q(c)}}|v_i(P)| \le Ci^{2(p-2)}, \quad \forall i \ge 1.
\]
Using the definitions of $\Phi^1_{i,t}$ and $\Phi^2_{i,t}$ and the previous estimates, we can bound
\begin{align*}
\sup_{P \in Q(c)}|\Phi^1_{I + i,t_{I + n + 1}}(P)| &\le Ci^{\max\{2,2(p-1)\}},\quad \sup_{P \in Q(c)}|\Phi^2_{I + i,t_{I + n + 1}}(P)| \le Ci^{\max\{2,2(p-1)\}},\\
& \sup_{P \in Q(c)}|A_{I + n + 1}(P)| \le Cn^{\max\{2,2(p-1)\}}.
\end{align*}
Combining the latter with the estimates of Lemma \ref{upest}, we can continue \eqref{q} as
\begin{align*}
\|Dw_n\|_{L^q(\Omega,\R^2)}^q \le C\sum_{i = 1}^ni^{q\max\{2,2(p-1)\}}i^{-2\min\{G_p(c),G_p(2c)\}-1} + Cn^{q\max\{2,2(p-1)\}}n^{-2\min\{G_p(c),G_p(2c)\}}.
\end{align*}
Our choices \eqref{incre} and \eqref{increless} imply \eqref{eqn:choice-c}, which yields precisely
\[
\max\{1,p-1\} - \min\{G_p(c),G_p(2c)\} < 0.
\]
Therefore, for some $q >1$, we also have
\[
q\max\{1,p-1\} - \min\{G_p(c),G_p(2c)\} < 0,
\]
which proves the present proposition.
\end{proof}
We are finally in position to prove Theorem \ref{tmain}, that we restate here:
\begin{theorem}\label{tmaintext}
Let $\Omega \subset \R^2$ be a ball. For every $p \in (1,\infty), p \neq 2$, there exists $\eps = \eps(p) > 0$ and a continuous $u \in W^{1,p-1 + \eps}(\Omega)$ such that $u$ is affine on $\partial\Omega$,
\begin{equation}\label{dery1}
\frac{3}{4} \le \partial_yu \le \frac{5}{4}, \quad \text{a.e. on }\Omega,
\end{equation}
\begin{equation}\label{conv1}
\dv(|Du|^{p-2}Du) = 0
\end{equation}
in the sense of distributions, but for all open $B \subset \Omega$
\begin{equation}\label{div1}
\int_{B}|Du|^pdx = + \infty.
\end{equation}
\end{theorem}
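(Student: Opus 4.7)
The plan is to take $u$ as the first component of a map $w=(u,v) \in W^{1,1+\eps}(\Omega,\R^2)$ obtained as the limit of the piecewise affine Lipschitz maps produced by iterating Proposition \ref{induc}, and then to invoke Proposition \ref{prop:translation} to translate the differential inclusion $Dw \in K_p$ a.e.\ into the desired distributional $p$-Laplace equation for $u$.

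First I would fix any starting matrix $M \in A_I(Q(c))$, set $w_0(x) \doteq Mx$, and apply Proposition \ref{induc} iteratively to produce $\{w_n\}_{n\ge 1}$ and scales $\{\delta_n\}$ satisfying \eqref{hp1}--\eqref{hp4} for every $n$. The $L^\infty$ estimate \eqref{hp3} together with $\delta_n \le 2^{-n}$ makes $\{w_n\}$ Cauchy in $L^\infty$, producing a continuous limit $w$ with $w = Mx$ on $\partial\Omega$. Proposition \ref{equib} yields a uniform bound on $\|w_n\|_{W^{1,1+\eps}}$ for some $\eps=\eps(p)>0$, so up to a subsequence $w_n \rightharpoonup w$ weakly in $W^{1,1+\eps}$ and $w \in W^{1,1+\eps}(\Omega,\R^2)$. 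The mollification trick built into \eqref{hp2} then forces $Dw_n \to Dw$ in $L^1$, hence a.e.\ along a further subsequence: $w_n \star \rho_{\delta_n}$ converges uniformly to $w$ while $\|D(w_n \star \rho_{\delta_n}) - Dw_n\|_{L^1} \to 0$, so $D(w_n \star \rho_{\delta_n}) \to Dw$ weakly in $L^1$ and hence $Dw_n \to Dw$ in $L^1$ as well. Since $Dw_n \in V_n$ a.e., Lemma \ref{conv} finally gives $Dw(x) \in K_p$ at a.e.\ $x \in \Omega$.

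Writing $w=(u,v)$, Proposition \ref{prop:translation} (applicable since a ball is convex) yields $\dv(|Du|^{p-2} Du) = 0$ in $\mathcal{D}'(\Omega)$, i.e.\ \eqref{conv1}, and via \eqref{esteq} turns $w \in W^{1,1+\eps}$ into $u \in W^{1,(1+\eps)\max\{1,p-1\}}$, which is $W^{1,p-1+\eps'}$ when $p>2$ and embeds in $W^{1,p-1+\eps}$ (since $p-1<1$) when $1<p<2$. A direct inspection of \eqref{A}, \eqref{BE}, \eqref{CD}, \eqref{phi1}, \eqref{phi2} shows that every matrix in $V_n$ has its $(1,2)$-entry equal to the parameter $w \in (\tfrac34,\tfrac54)$, so passing to the a.e.\ limit gives \eqref{dery1}; and $u|_{\partial\Omega} = (Mx)_1$ is affine.

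It remains to prove \eqref{div1}, which I would tackle by contradiction. If $\int_B |Du|^p dx < \infty$ for some open ball $B \subset \Omega$, then $u \in W^{1,p}(B)$ is a genuine weak solution of the $p$-Laplace equation on $B$, and the classical interior $C^{1,\alpha}$ regularity theory makes $Du$ continuous on $B$. I would contradict this by showing that for every open $B \subset \Omega$ one can find $i \ge I$ such that both $\{x \in B: Dw(x) \in B_i(\overline{Q(c)})\}$ and $\{x \in B: Dw(x) \in D_i(\overline{Q(c)})\}$ have positive measure: by Lemma \ref{sigma} the two closed sets are separated by a uniformly positive distance in $K_p$, and inspection of \eqref{BE}--\eqref{CD} shows that the first rows of their elements have first components of opposite sign, which rules out continuity of $Du$. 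The main technical step I expect to be the most delicate is propagating the positivity statements \eqref{lam:positive}, \eqref{mixed31}, \eqref{mixed32} and the iterative lower bound \eqref{finalkbasso} through the construction: once $n$ is so large that some $\Omega_{j,n} \subset B$ (possible since $\diam(\Omega_{j,n}) \le 1/n$), iterating \eqref{finalkbasso} in $n$ together with the convergence of the product $\prod_n (1 - C/2^n) > 0$ gives a uniform positive lower bound for $|\{x \in \Omega_{j,n}: Dw_n \in U^k_{I+i, I+n-1}\}|$ for suitable $i$, and the a.e.\ convergence $Dw_n \to Dw$ combined with the continuity argument of Lemma \ref{conv} identifies the resulting limit values in $B_i(\overline{Q(c)}) \cup D_i(\overline{Q(c)})$.
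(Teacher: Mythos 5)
Your proposal follows the paper's overall structure (inductive construction via Proposition~\ref{induc}, Proposition~\ref{prop:translation} to translate the inclusion $Dw\in K_p$ into \eqref{conv1}, reading off \eqref{dery1} from the $(1,2)$-entries of $V_n$, and the discontinuity-plus-regularity contradiction for \eqref{div1}), but the step where you pass from $Dw_n\rightharpoonup Dw$ to strong $L^1$ convergence is a genuine gap. You argue: $w_n\star\rho_{\delta_n}\to w$ uniformly, $\|D(w_n\star\rho_{\delta_n})-Dw_n\|_{L^1}\to 0$, ``so $D(w_n\star\rho_{\delta_n})\to Dw$ weakly in $L^1$ and hence $Dw_n\to Dw$ in $L^1$.'' The last ``hence'' is not valid: combining weak $L^1$ convergence of $D(w_n\star\rho_{\delta_n})$ with $\|D(w_n\star\rho_{\delta_n})-Dw_n\|_{L^1}\to 0$ yields only weak $L^1$ convergence of $Dw_n$, which does not produce an a.e.\ convergent subsequence. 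Yet pointwise a.e.\ convergence (up to subsequence) is needed twice: to feed Lemma~\ref{conv} and conclude $Dw\in K_p$ a.e., and in the final argument to transfer the positive-measure lower bounds for $\{Dw_n\in U^k\}$ into a positive-measure statement for $Dw$.

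The paper's argument at this point is different and quantitative. It decomposes
\[
\|Dw_n - Dw\|_{L^1} \le \|Dw_n - Dw_n\star \rho_{\delta_n}\|_{L^1}+ \|Dw - Dw\star \rho_{\delta_n}\|_{L^1}+ \|(w - w_n)\star D\rho_{\delta_n}\|_{L^1}
\]
and bounds the third term by
\[
\frac{C}{\delta_n}\|w - w_n\|_{L^\infty} \le \frac{C}{\delta_n}\sum_{j\ge n}\|w_{j+1}-w_j\|_{L^\infty} \overset{\eqref{hp3}}{\le} \frac{C}{\delta_n}\sum_{j\ge n}\delta_j\,2^{-j-1} \le C\sum_{j\ge n}2^{-j-1},
\]
using the monotonicity $\delta_j\le\delta_n$ for $j\ge n$. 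The first term is $\le 2^{-n}$ by \eqref{hp2}, the second vanishes since $Dw\in L^1$, and so $Dw_n\to Dw$ strongly in $L^1$. This is exactly the reason the extra factor $\delta_q$ appears in \eqref{hp3}: the $L^\infty$-closeness of $w_n$ to $w$ must be small compared with $\delta_n$ to overcome the $1/\delta_n$ loss incurred when putting the derivative on the mollifier. Your proposal uses \eqref{hp3} only to get $L^\infty$ Cauchyness, so it drops precisely the ingredient that makes the strong convergence work. If you repair this step (and, relatedly, make explicit in the \eqref{div1} argument that you pass to the limit via the Lipschitz functional $\int \dist(Dw_n,\bigcup_i B_i(\overline{Q(c)}))\,dx$, since the sets $\{Dw_n\in U^k_{\cdot,n}\}$ vary with $n$ and cannot be compared directly), the rest of your outline matches the paper.
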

\begin{proof}
Let $\{w_n\}$ be the sequence constructed in Proposition \ref{induc}. By \eqref{th3}, we find that there exists a limit $w_\infty = \lim_{n\to \infty}w_n$ in the $L^\infty(\R^2,\R^2)$ topology. Since every $w_n$ is continuous, then so is $w_\infty$. By Proposition \ref{equib}, we also have that $w_\infty$ is the weak limit in $W^{1,1+\eps}(\Omega,\R^2)$ of $w_\infty$. We shall now prove that the convergence is strong. This is a crucial but standard point of this type of constructions, see \cite[Theorem 5.3]{DMU}. Indeed, using the notation of Proposition \ref{induc}, we have for all $n$:
\begin{align*}
\|Dw_n - Dw_\infty\|_{L^1} \le \|Dw_n - Dw_n\star \rho_{\delta_n}\|_{L^1}+ \|Dw_\infty - Dw_\infty\star \rho_{\delta_n}\|_{L^1}+ \|Dw_\infty\star \rho_{\delta_n} - Dw_n\star \rho_{\delta_n}\|_{L^1}.
\end{align*}
By our choice of $\delta_n$, see Proposition \ref{ind}\eqref{th2}, and the fact that $Dw_\infty \in W^{1,1 + \eps}$, we have that the first two addenda converge to $0$. Concerning the third, we can employ standard estimates on mollification to bound:
\[
\|Dw_\infty\star \rho_{\delta_n} - Dw_n\star \rho_{\delta_n}\|_{L^1} \le \frac{C}{\delta_n}\|w_\infty - w_n\|_{L^\infty} \le \frac{C}{\delta_n}\sum_{j = n}^\infty\|w_{j+1}- w_j\|_{L^\infty} \overset{\eqref{th3}}{\le} \frac{C}{\delta_n}\sum_{j = n}^\infty\delta_j2^{-j - 1} \le C\sum_{j = n}^\infty 2^{-j - 1}.
\]
We infer the strong convergence of $Dw_n$ to $Dw_\infty$. Up to passing to a subsequence, this yields a subsequence of $\{Dw_n\}$ that converges pointwise a.e.. Through Proposition \ref{induc}\eqref{th4} and Lemma \ref{conv}, we deduce, for almost all $x \in \Omega$,
\[
Dw_\infty(x) \in K_p.
\]
Proposition \ref{prop:translation} tells us that, if $w_\infty = (w^1_\infty,w^2_\infty)$, then $u \doteq w_\infty^1$, has the right integrability and fulfills \eqref{conv1}. We now turn to \eqref{dery1}, which is straightforward: by definition of $V_n$ it follows that if
\[
X = \left(\begin{array}{cc}x_{11} & x_{12} \\ x_{21} & x_{22}\end{array}\right) \in V_n
\]
for some $n$, then $\frac{3}{4} < x_{12} < \frac{5}{4}$. Since by Proposition \ref{ind}\eqref{th4} we have, a.e. on $\Omega$,
\[
Dw_n \in V_n, \quad \forall n,
\]
it then follows, if $w_n = (w_n^1,w_n^2)$, that a.e. on $\Omega$
\[
\frac{3}{4} \le \partial_{y}w_n^1 \le \frac{5}{4}.
\]
Therefore, $w_\infty$ enjoys the same property. We shall now show \eqref{div1}. In order to do so, we show that $Dw_\infty$ is (essentially) discontinuous on any open subset $B \subset \Omega$. We claim this is enough to conclude \eqref{div1}. Indeed, suppose by contradiction that $Du \in L^p(\Omega')$, for some open $\Omega' \subset \Omega$. Then, since $u$ solves \eqref{conv1}, we see that it is a \emph{weak} solution of the $p$-Laplace equation, and by the classical regularity theory for the $p$-Laplace equation, it follows that $Du$ is continuous in $\Omega'$, and hence so is $Dw_\infty$, which would result in a contradiction. Thus, we conclude the proof of the present theorem by showing this last assertion. Fix $B \subset \Omega$ open. Since 
\[
\sup_{j}\{\diam(\Omega_{j,n}): \Omega_{j,n} \in \mathcal{F}_n\} \le \frac{1}{n},
\]
we find $n_0$ and $j_0 \in \{1,\dots, N_{n_0}\}$ such that $\Omega_{j_0,n_0} \subset B$. Recall that, by Lemma \ref{conv} and the pointwise convergence of a subsequence of $\{Dw_n\}_n$,
\[
Dw_\infty \in \bigcup_{i\ge I}B_i(Q(c))\cup \bigcup_{i \ge I}D_i(Q(c)).
\]
Our aim is to show that
\begin{equation}\label{BIset}
|\{x \in \Omega_{j_0,n_0}: Dw_\infty(x) \in \bigcup_{i\ge I}B_i(Q(c))\}| > 0
\end{equation}
and
\begin{equation}\label{DIset}
|\{x \in \Omega_{j_0,n_0}: Dw_\infty(x) \in \bigcup_{i\ge I}D_i(Q(c))\}| > 0.
\end{equation}
Since, by Lemma \ref{sigma}, $\bigcup_{i\ge I}B_i(Q(c)) \cap \bigcup_{i\ge I}D_i(Q(c)) = \emptyset$, this would prove that $Dw_\infty$ is not continuous in $B$. We claim that, in order to prove \eqref{BIset}-\eqref{DIset}, it is sufficient to show that there exists a constant $c_0 = c_0(j_0,n_0) > 0$ such that for all $k= 1,2$ and $n \ge n_0$,
\begin{equation}\label{frombelow}
|\{x \in \Omega_{j_0,n_0}: Dw_n(x) \in \bigcup_{i = I}^{I + n-1} U_{i,I + n-1}^k\}| \ge c_0.
\end{equation}
Indeed, let $\Omega_n^k \doteq \{x \in \Omega_{j_0,n_0}: Dw_n(x) \in \bigcup_{i = I}^{I + n-1} U_{i,I + n-1}^k\}$. If \eqref{frombelow} holds, then
\begin{align*}
\int_{\Omega_{j_0,n_0}}\dist\left(Dw_n(x),\bigcup_{i = I}^{n-1}B_i(Q(c))\right)dx \ge \int_{\Omega_n^2}\dist\left(Dw_n(x),\bigcup_{i = I}^{n-1}B_i(Q(c))\right)dx \ge c'c_0,
\end{align*}
where in the last inequality we used that
\[
\dist\left(M,\bigcup_{i = I}^{n-1}B_i(Q(c))\right) \ge c' > 0, \quad \forall M \in \ \bigcup_{i = I}^{I + n-1} U_{i,I + n-1}^2,
\]
as can be easily seen by properties \eqref{primo}-\eqref{secondo}. By the strong convergence $Dw_n \to Dw_\infty$ in $L^1(\Omega)$, it follows that
\[
\int_{\Omega_{j_0,n_0}}\dist\left(Dw_\infty(x),\bigcup_{i = I}^{\infty}B_i(Q(c))\right)dx \ge c'c_0 > 0,
\]
and we conclude \eqref{DIset}. Analogously, one infers \eqref{BIset} from \eqref{frombelow} for $k = 1$. Hence, we only need to prove \eqref{frombelow}. Let any $n > n_0$. Recall that we chose the partitions $\{\Omega_{j,n}\}$ with the property that for every $j \in \{1,\dots, N_{n + 1}\}$, if $\Omega_{j,n+1}\cap \Omega_{k(j),n} \neq \emptyset$ for some $k(j) \in \{1,\dots, N_n\}$, then $\Omega_{j,n + 1} \subset \Omega_{k(j), n}$. Therefore, we can sum over a suitable subset of indexes $j \in \{1,\dots, N_{n}\}$ to rewrite, for all $n \ge n_0 + 2$, $i = n_0 +1$ and $k = 1,2$, the bounds from below of \eqref{finalkbasso}-\eqref{finalkalto}-\eqref{finalU3} as
\begin{align}
\label{below1} &\left(1 - \frac{C}{2^{n}}\right)|\{x \in \Omega_{j_0,n_0}: Dw_{n} \in U^k_{I + n_0 + 1, I + n-1}\}| \le |\{x \in\Omega_{j_0,n_0}: Dw_{n + 1} \in U^k_{I + n_0+1, I + n}\}|,\\
\label{below2}&\frac{k_1}{n}|\{x \in \Omega_{j_0,n_0}: Dw_{n} \in U^3_{I + n}\}| \le|\{x \in \Omega_{j_0,n_0}: Dw_{n + 1} \in U^k_{I + n, I + n}\}|,\\
\label{below3}&0 < |\{x \in \Omega_{j_0,n_0}: Dw_{n-1} \in U^3_{I + n-1}\}|.
\end{align}
We use \eqref{below1} inductively to find that, if $n \ge n_0 + 3$,
\begin{equation}\label{last1}
\prod_{i = n_0 + 2}^{n-1}\left(1 - \frac{C}{2^{i}}\right)|\{x \in \Omega_{j_0,n_0}: Dw_{n_0+ 2} \in U^k_{I + n_0+ 1, I + n_0 + 1}\}|\le |\{x \in \Omega_{j_0,n_0}: Dw_{n} \in U^k_{I + n_0+ 1, I + n-1}\}|.
\end{equation}
We also have
\begin{equation}\label{last2}
0\overset{\eqref{below3}}{<}\frac{k_1}{n_0 +1}|\{x \in \Omega_{j_0,n_0}: Dw_{n_0 + 1} \in U^3_{I + n_0 + 1}\}| \overset{\eqref{below2}}{\le} |\{x \in \Omega_{j_0,n_0}: Dw_{n_0+ 2} \in U^k_{I + n_0+ 1, I + n_0 + 1}\}|.
\end{equation}
Since
\[
\prod_{i = {n_0+2}}^\infty \left(1-\frac{C}{2^i}\right) > 0,
\]
from \eqref{last1}-\eqref{last2} we infer \eqref{frombelow}, and we conclude the proof of the Theorem.
\end{proof}

\bigskip
\textbf{ Acknowledgements}. 
The authors have been supported by the SNF Grant 182565. 

\bibliographystyle{plain}
\bibliography{Third}
\end{document}